
\documentclass{article}

\usepackage{fullpage}

\usepackage{graphicx}
\usepackage{subcaption}
\usepackage{amsmath,amssymb,amsfonts,amsthm}
\usepackage{mathalfa}
\usepackage{tikz}
\usetikzlibrary{fadings}
\tikzfading[name=fade out,
            inner color=transparent!20,
            outer color=transparent!90]
\tikzfading[name=fade equal,
            inner color=transparent!30,
            outer color=transparent!30]
\usetikzlibrary{circuits.ee.IEC}
\usepackage[american]{circuitikz}
\usetikzlibrary{calc}
\usepackage{color}

\newenvironment{myproof}[1]{\smallskip\noindent{\sc Proof #1.}}%
        {\hspace*{\fill}$\Box$\par}

\usepackage{enumitem}
\newlist{ass}{enumerate}{1}
\setlist[ass]{label = {\bf (A\arabic*)}, resume}
\newlist{assFP}{enumerate}{1}
\setlist[assFP]{label = {\bf (A\arabic*-P)}, resume}
%\newcounter{myCount}{0}
\newlist{assBV}{enumerate}{1}
\setlist[assBV]{label = {\bf (A\arabic*-BV)}, resume}
\newlist{hyp}{enumerate}{1}
\setlist[hyp]{label = {(H\arabic*)}, resume}
\newlist{obj}{enumerate}{1}
\setlist[obj]{label = {\bf (O\arabic*)}, resume}

\theoremstyle{plain}
\newtheorem{lem}{Lemma}
\newtheorem{prop}{Proposition}
\newtheorem{thm}{Theorem}
\newtheorem{cor}{Corollary}
\theoremstyle{definition}
\newtheorem{defn}{Definition}
\newtheorem{exam}{Example}
\theoremstyle{remark}
\newtheorem{rem}{Remark}

\newcommand{\N}{\mathbb{N}}

\newcommand{\R}{\mathbb{R}}

\newcommand{\cF}{\mathcal{F}}

\newcommand{\cH}{\mathcal{H}}

\newcommand{\cK}{\mathcal{K}}
\newcommand{\cL}{\mathcal{L}}
\newcommand{\cM}{\mathcal{M}}
\newcommand{\cN}{\mathcal{N}}

\newcommand{\cP}{\mathcal{P}}

\newcommand{\cS}{\mathcal{S}}
\newcommand{\cT}{\mathcal{T}}

\newcommand{\argmin}{\operatornamewithlimits{arg\,min}}
\DeclareMathOperator{\rank}{rank}

\DeclareMathOperator{\svm}{svm}
\DeclareMathOperator{\Haus}{Haus}
\DeclareMathOperator{\im}{im}
\DeclareMathOperator{\cl}{cl}
\DeclareMathOperator{\gph}{graph}
\DeclareMathOperator{\dom}{dom}
\DeclareMathOperator{\rint}{rint}
\DeclareMathOperator{\inn}{int}
\DeclareMathOperator{\rge}{rge}

\DeclareMathOperator{\col}{col}

\newcommand{\reg}{\text{\normalfont reg}}

\colorlet{Darkred}{red!50!black}
\colorlet{Darkgreen}{green!50!black}
\xdefinecolor{UIUCblue}{rgb}{0,0.24,0.49} %professional:{110,139,191}
\xdefinecolor{UIUCorange}{rgb}{0.96,0.5,0.14} %professional:{239,138,28}

\newcommand{\footremember}[2]{%
   \footnote{#2}
    \newcounter{#1}
    \setcounter{#1}{\value{footnote}}%
}

\title{Well-Posedness and Output Regulation for Implicit Time-Varying Evolution Variational Inequalities\footremember{conf}{A preliminary version of the  results was presented at 53rd IEEE Conference on Decision and Control, Los Angeles, USA in December 2014. See \cite{TanwBrog14c}.}}
\author{%
    Aneel Tanwani\footremember{laas}{LAAS--CNRS, University of Toulouse, CNRS, 7 Avenue du Colonel Roche, 31400 Toulouse, France.
    Email: {\tt aneel.tanwani@laas.fr}. His work is partially supported by the project ANR-17-CE40-0019-01.}%
    \and Bernard Brogliato\footremember{inria}{INRIA Grenoble Rh\^one-Alpes, Universit\'e Grenoble-Alpes, 655 avenue de l'Europe, 38334 Saint-Ismier, France.
    Email: {\tt bernard.brogliato@inria.fr}
}%
    \and Christophe Prieur\footremember{gipsa}{Universit\'e Grenoble-Alpes, CNRS, GIPSA-lab, F-38000 Grenoble, France.
    Email: {\tt christophe.prieur@gipsa-lab.fr.} His work has been partially supported by the LabEx PERSYVAL-Lab (ANR-11-61 LABX-0025-01).
}%
}
\date{}

\begin{document}
\maketitle

%\begin{frontmatter}
%\title{Well-Posedness and Output Regulation for Time-Varying Evolution Variational Inequalities}
%
%\author[Laas]{Aneel Tanwani\corref{cor1}\fnref{fn1}}\ead{aneel.tanwani@laas.fr }
%\author[Inria]{Bernard Brogliato}\ead{bernard.brogliato@inria.fr }
%\author[Gipsa]{Christophe Prieur}\ead{christophe.prieur@gipsa-lab.fr}
%
%\cortext[cor1]{Corresponding author.}
%\fntext[fn1]{Partially sponsored by PEPS JCJC project VANDACS.}
%\address[Laas]{LAAS--CNRS, University of Toulouse, CNRS, Toulouse, France.}% full
%\address[Inria]{INRIA -- Rh\^one-Alpes, 655 Avenue de l'Europe, 38334 Montbonnot, France.}
%\address[Gipsa]{Gipsa-Lab (CNRS), 11 Rue des Math\'ematiques, BP 46, 38402 St. Martin d'H\`eres, France.}

\begin{abstract}
A class of evolution variational inequalities (EVIs), which comprises ordinary differential equations (ODEs) coupled with variational inequalities (VIs) associated with time-varying set-valued mappings, is proposed in this paper.
We first study the conditions for existence and uniqueness of solutions. The central idea behind the proof is to rewrite the system dynamics as a differential inclusion which can be decomposed into a single-valued Lipschitz map, and a time-dependent maximal monotone operator. Regularity assumptions on the set-valued mapping determine the regularity of the resulting solutions. Complementarity systems with time-dependence are studied as a particular case. We then use this result to study the problem of designing state feedback control laws for output regulation in systems described by EVIs.
The derivation of control laws for output regulation is based on the use of internal model principle, and two cases are treated: First, a static feedback control law is derived when full state feedback is available; In the second case, only the error to be regulated is assumed to be available for measurement and a dynamic compensator is designed. As applications, we demonstrate how control input resulting from the solution of a variational inequality results in regulating the output of the system while maintaining polyhedral state constraints. Another application is seen in designing control inputs for regulation in power converters.
\end{abstract}

\begin{keywords}
Evolution variational inequalities; convex optimization; time-varying maximal monotone mappings; passivity; output regulation; complementarity programming; viability control.
\end{keywords}

\begin{AMS}
34A36, 34A60, 49K21, 49J52, 49J53, 93D15, 93D20.
\end{AMS}

%\pagestyle{myheadings}
%\thispagestyle{plain}
%\markboth{A. Tanwani, B. Brogliato and C. Prieur}{Stability and observers for nonsmooth systems}
%\end{frontmatter}

\section{Introduction}
Nonsmooth dynamical systems provide a modeling framework to describe discontinuous changes in vector fields, and the state trajectories. The evolution of the trajectories of such systems is often described via set-valued mappings and leads to several different formalisms which have been developed somewhat independently in the literature. As examples, one may see the books on viability theory \cite{AubiCell84}, Fillipov systems \cite{Fili88}, sweeping processes \cite{Mont93}, nonsmooth mechanics \cite{Brog16}, and the articles on closed-convex processes \cite{Goeb14}, differential variational inequalities \cite{PangStew08}, or an overview of discontinuous dynamical systems \cite{Cortes08}.
Evolution variational inequalities (EVIs) provide a mathematical framework to model evolution of state trajectories which, in addition to differential equations, satisfy some algebraic relations as well.
Roughly speaking, EVIs comprise an ordinary differential equation (ODE) to describe the motion of the state variable, and a {\em variational inequality} (VI) that expresses the constraints, and relations that must be satisfied by the state variable.
VIs are most commonly encountered in optimization, and different classes of EVIs have found applications in modeling of electrical circuits with nonsmooth devices, and mechanical systems with impacts.

Building on our recent conference papers \cite{TanwBrog14b, TanwBrog14c}, this article studies the conditions for existence and uniqueness of solutions for a certain class of EVIs with time-varying set-valued map. These results are also used for synthesizing controllers which regulate the output of the system to a prescribed output trajectory. The applications of these results are investigated in the context of designing inputs for viability of convex sets, and regulation problems in electrical circuits.

In the standard formulation of VIs \cite{FaccPang03}, we consider a mapping, say $F:\R^{d_s} \rightarrow \R^{d_s}$, and are interested in finding a vector $v \in \R^{d_s}$ in the domain of $F$, such that  the inequality
\[
\langle F(v), v' - v \rangle \ge 0, \quad \forall \, v' \in \cS,
\]
holds for some closed, convex, and stationary set $\cS \subseteq \R^{d_s}$.
For the dynamical systems considered in this paper, we couple such VIs with ODEs, and moreover, we also let the set $\cS$ be time-varying.
The added difficulty treated in this paper deals with the case when the variable $v$ itself depends on the function $F$ (see \eqref{eq:sysDVI} in Section~\ref{sec:prelim} for exact description), which motivates the term `implicit' in the title.
%\footnote{In the paper \cite{PangStew08}, the case of time-varying $\cS$ is termed as quasi-DVI, but in this paper we do not make this distinction as we only consider the time-varying case and our results obviously hold when $\cS$ is stationary.}
From the geometric viewpoint, a VI can be equivalently expressed as finding $v$ such that $-F(v)$ is a vector in the normal cone to $\cS$ at $v$.
The normal cone defines a set-valued operator on the set $\cS$, and thus, different tools from the theory of differential inclusions could be used to study the solutions of EVIs.
In our work, we use the fact that the set-valued operator defined by the normal cone is in fact {\em maximal monotone}, and build on the ground work laid by Br\'ezis in his book \cite{Brez73} on solutions of differential inclusions with maximal monotone operators; one may also consult the recent tutorial-like article \cite{PeypSori10}.
For the class of EVIs studied in this paper, motivated primarily by modeling of physical systems, the set-valued maximal monotone operator does not appear explicitly in the description of the differential equation/inclusion. The multivalued component, which is also time-varying in our problem formulation, is only defined implicitly through an inequality which resembles a VI.
Conditions on system data are imposed so that this multivalued operator could be decomposed as a single-valued Lipschitz continuous function, and a maximal monotone set-valued operator.
This decomposition is the key element in establishing the proof of well-posedness. Several particular cases where $\cS$ is the positive orthant (complementarity systems), and the regularity of $\cS$ is imposed by a function of bounded variation, are also treated.

Our approach extends the work of \cite{BrogGoel11, BrogGoel13} on the existence of solutions by relaxing certain assumptions on the matrices that describe the VI.
Also, in the literature, the papers \cite{Brog04, CamlIann14, CamlSchu15} study existence and uniqueness of solutions for systems with linear vector fields coupled with maximal monotone operators.
Making an elegant connection between the notion of passivity (which appears primarily in control theory) and maximal monotonicity of set-valued maps, it is shown that if the matrices describing the set-valued operator and the vector field satisfy certain linear matrix inequalities (LMIs), then there exists a unique solution to the system under consideration. The paper \cite{Brog04} is a particular case of \cite{CamlSchu15}, where the latter also takes into account the implicit description of the set-valued operator.
The results of this paper also generalize the results of \cite{Brog04, CamlSchu15} by considering nonlinear vector fields in system description, and nonstationary sets $\cS$.
The proof worked out in this case is also completely different than the work of \cite{CamlSchu15}.

The second main contribution of this paper comes in applying these results to study the control design problem of output regulation. The system dynamics are driven by a control input which can be chosen as a function of measured states to drive the output of the system to track a prescribed function asymptotically.
Our approach is based on the pioneering work of \cite {Francis77} who proposed simple algebraic criteria for output regulation in multivariable linear time-invariant (LTI) systems.
In \cite{Francis77}, it is assumed that the controlled plant is driven by the output of an LTI exosystem that models the dynamics of the reference trajectories and/or disturbances.
Intuitively speaking, the proposed control input that achieves the output regulation comprises a feedback component to make the closed-loop dynamics stable and an additional open-loop component that shapes the steady state of the plant.
The derivation of the open-loop component of the control input requires the exact knowledge of the exosystem dynamics, and hence the approach is termed as {\em internal model principle}.
A large amount of work has followed on the problem of output regulation using internal model principle in more general contexts and this paper could be seen as extending this principle for a class of EVIs.

In our results on output regulation, we will restrict ourselves to vector fields which are linear in state and input.
The sets considered in the description of VIs for the exosystem and the plant are assumed to be the same but the mappings used to describe the relations could be different.
We derive sufficient conditions under which there exists a control input that achieves output regulation while maintaining state constraints.
In addition to the classical regulator synthesis equations, additional conditions are needed in our work to generate a dissipative relation between the multivalued part and the output regulation error.
These additional conditions also guarantee that the closed-loop system is well-posed, that is, it admits a unique solution which is an important consideration for designing controllers for such class of systems.
We study two cases for control synthesis depending on how much information is available to the controller. In the first case, it is assumed that the entire states of the plant and the exosystem are available and thus, a static controller is designed to achieve output regulation.
In the second case, it is assumed that only the regulation error (which needs to converge to zero) is available and in that case a dynamic compensator is designed.

Moving from control design to applications, we consider two particular case studies. First, we consider reference signals generated by linear complementarity systems (LCS), and design a control input for an LTI system that not only achieves the output regulation but also forces the state-trajectory to evolve within a predefined time-varying polyhedral subset of the state space.
Secondly, we consider electrical circuits with nonsmooth devices which can be modeled using the framework of EVIs. The problem of designing regulators for such systems is of wide interest in engineering community, and here we present how our control design algorithms can be applied to solve some of these problems. An example is provided with each of the case studies with simulation results. In the broader context, these applications relate to solving regulation problems in the presence of state constraints. In the literature, we see an example of control of positive systems in this direction \cite{Rant15}. In positive systems, the {\em smooth} vector fields are such that the resulting trajectory evolves in the positive orthant. Here, even if we choose the constraint set to be the positive orthant, discontinuities in the vector fields arise when the state is at the boundary of the constraint set, so that the resulting trajectory is ``forced to'' satisfy the constraints by introducing discontinuities.

%While most of the work on EVIs in the recent past has focused on studying the solution theory of such systems~\cite{BrogGoel11, BrogThib10, PangStew08}, some classical control-theoretic problems such as stability \cite{CamlPang06, GoelBrog04}, optimal control \cite{Brog09}, observer and controller design \cite{BrogHeem09, HeemCaml11, TanwBrog13b} have been investigated in related system classes.
%The problem of output regulation for such systems, which relates to designing control laws for asymptotically tracking a reference trajectory or rejecting disturbances while respecting the relations imposed by system dynamics,  was introduced in our recent work \cite{TanwBrog14pp} and in this paper, we extend our results to a broader class of systems.

The rest of the paper is organized as follows: In Section~\ref{sec:prelim}, we define the system class, formulate the problem of output regulation that we consider, and introduce some basic results from convex analysis. These results are used to develop a result on existence and uniqueness of solutions for the proposed system class in Section~\ref{sec:sol}.
The design of static state feedback is considered in Section~\ref{sec:static}, followed by the design of a dynamic compensator in Section~\ref{sec:dyn}.
We discuss the applications of our results in Section~\ref{sec:app}.

\section{Preliminaries}\label{sec:prelim}
\subsection{Evolution Variational Inequalities}
Consider a set-valued mapping $\cS:[0,\infty) \rightrightarrows \R^{d_s}$, and assume that $\cS(t)$ is closed, convex, and nonempty, for each $t \ge 0$.
The class of EVIs considered in this paper is described as follows:
\begin{subequations}\label{eq:sysDVI}
\begin{align}
\dot x(t) &= f(t,x(t)) + G \eta(t) \label{eq:sysDVIa} \\
v(t) & = Hx(t) + J \eta(t), \quad v(t) \in \cS(t), \label{eq:sysDVIb} \\
\langle v'- & v(t), \eta(t)\rangle  \ge 0, \quad \forall \, v'\in \cS(t) \label{eq:sysDVIc}.
\end{align}
\end{subequations}
In the above equation $x(t) \in \R^n$ denotes the state, $\eta(t), v(t) \in \R^{d_s}$ are vectors, the vector field $f:[0,\infty)\times \R^n \rightarrow \R^n$ is Lebesgue integrable in time (the first argument), and globally Lipschitz in the state variable (with possibly time-varying Lipschitz modulus). Moreover, $G \in \R^{n\times d_s}$, $H \in \R^{d_s \times n}$, $J \in \R^{d_s \times d_s}$ are constant matrices, and $J$ is positive semidefinite.

In the standard references on variational inequalities, the multivalued mapping $\cS(\cdot)$ is assumed to be stationary \cite{FaccPang03, PangStew08}.
%The inequalities of type \eqref{eq:sysDVIb}, \eqref{eq:sysDVIc}, which are subjected to time-varying domain are also sometimes called quasi-variational inequalities.
To make connections with the standard formulation of evolution equations with time-varying domains \cite{AubiCell84, Mont93},  it is seen that \eqref{eq:sysDVIb}, \eqref{eq:sysDVIc} could be compactly, and equivalently, written as:
\begin{equation}\label{eq:incCone}
\eta(t) \in -\cN_{\cS(t)}(Hx(t)+J \eta(t)),
\end{equation}
where $\cN_{\cS(t)}(v(t))$ denotes the normal cone to the convex set $\cS(t)$ at $v(t)$, and is defined as:
\[
\cN_{\cS(t)}(v(t)) := \{\lambda \in \R^{d_s} \, |\, \langle \lambda, v'- v(t) \rangle \le 0, \forall \, v' \in \cS(t)\}.
\]
As convention, we let $\cN_{\cS(t)}(v(t)) := \emptyset$, for all $v(t) \not\in \cS(t)$.

One can also draw the analogies of the system class \eqref{eq:sysDVI} with the classical sweeping processes \cite{Moreau77}. When $J = 0$, one can interpret \eqref{eq:sysDVI} as follows: as long as $v(t) = Hx(t)$ is in the interior of the set $\cS(t)$, we get $\eta(t) = 0$ and \eqref{eq:sysDVI} reduces to an ODE $\dot x(t) = f(t,x(t))$ (for at least a small period of time) to satisfy the constraint $v(t) \in \cS(t)$, until $v(t)$ hits the boundary of the set $\cS(t)$.
At this moment, if the vector field $f(t,x(t))$ is pointed outside of the set $\cS(t)$, then any component of this vector field in the direction normal to $\cS(t)$ at $v(t)$ must be annihilated to maintain the motion of $v$ within the constraint set, see Figure~\ref{fig:setMapa}. In general, when $J \neq 0$, the vector $Hx(t)$ may not necessarily be contained in $\cS(t)$, and we seek a vector(s) $\eta(t)$ such that $v(t) = Hx(t)+J\eta(t) \in \cS(t)$ and $-\eta(t)$ is also normal to $\cS(t)$ at $v(t)$, see Figure~\ref{fig:setMapb}. In other words, the vector $\eta$ is defined implicitly and not explicitly for the general case $J \neq 0$.

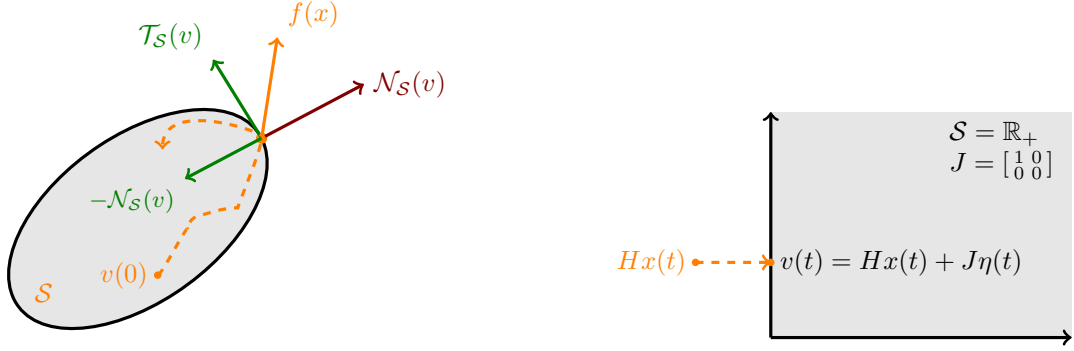
\begin{figure}
\centering
\begin{subfigure}[b]{0.49\linewidth}\centering
\begin{tikzpicture}[thick, xscale=0.75,yscale=0.85]
\def\myLength{72}
\def\myWidth{38}
\def\myAngle{30}
\def\horiz{0.867*\myLength}
\def\vert{0.5*\myLength}
\coordinate (start) at (10pt,-25pt);
\coordinate (origin) at (0,0);
\fill [{black!10}] (origin) ellipse [x radius = \myLength pt, y radius = \myWidth pt, rotate = \myAngle];
\draw [black, very thick] (origin) ellipse [x radius = \myLength pt, y radius = \myWidth pt, rotate = \myAngle];
\draw (-\myLength+15 pt,-\myWidth+5 pt) node[anchor=west, orange] {$\cS$};
\draw (start) node[orange, circle,fill,inner sep=1] {} node[anchor=east, orange] {$v(0)$};
\draw [orange, rounded corners, very thick, dashed,-] (start) .. controls +(right:2pt) and +(down:2pt) .. (25pt,0pt) -- (45pt,5pt) .. controls +(right:5pt) and +(down:5pt) .. (\horiz pt,\vert pt);
\draw (72*0.867pt,72*0.5pt) node[circle,fill,color=orange,inner sep=1.5] {} node[anchor=north] {};
\draw [Darkred, very thick, ->] (\horiz pt, \vert pt) -- +(25: 2cm) node[anchor=west] {$\cN_{\cS}(v)$};
\draw [orange, very thick, ->] (\horiz pt, \vert pt) -- +(80: 1.6cm) node[anchor=south west] {$f(x)$};
\draw [Darkgreen, very thick, ->] (\horiz pt, \vert pt) -- +(125: 1.5cm) node[anchor=south east] {$\cT_{\cS}(v)$};
\draw [Darkgreen, very thick, ->] (\horiz pt, \vert pt) -- +(205: 1.5cm) node[anchor=north east] {\small $-\cN_{\cS}(v)$};
\draw [orange, rounded corners, very thick, dashed,->] (\horiz pt,\vert pt) .. controls +(right:10pt) and +(up:25pt) .. +(-50 pt,-5pt);
\end{tikzpicture}
\caption{Analogy of DVIs with sweeping processes when $J=0$.}
\label{fig:setMapa}
\end{subfigure}
\begin{subfigure}[b]{0.49\linewidth}\centering
\begin{tikzpicture}
\fill [{black!10}] (0,0)--(0,3)--(4,3)--(4,0)--(0,0);
\draw [very thick,->] (0,0) -- (0,3);
\draw [very thick,->] (0,0) -- (4,0);
\draw (4,3) node[anchor=north east, text width=1.5cm,align=left] {$\cS = \R_+$ \newline $J = \left[ \begin{smallmatrix}1 & 0 \\ 0 & 0 \end{smallmatrix}\right]$};
\draw (-1,1) node[orange, circle,fill,inner sep=1] {} node[anchor=east, orange] {$Hx(t)$};
\draw [very thick, orange, dashed, ->] (-1,1) -- (0,1);
\draw (0,1) node[orange, circle,fill,inner sep=1] {} node[anchor=west] {$v(t) = Hx(t) + J \eta(t)$};
\end{tikzpicture}
\caption{The vector $J\eta(t)$ pushes $Hx(t)$ in the set $\cS = \R_+$.}
\label{fig:setMapb}
\end{subfigure}
\caption{Graphical depiction of set-valued components in system~\eqref{eq:sysDVI}.}
\label{fig:setMap}
\end{figure}

In what follows, we will use the standard notation, $\cl$, $\inn$, and $\rint$ to denote the closure,  interior, and the relative interior of a set respectively. The domain, range, and kernel of an operator are denoted by $\dom$, $\rge$, and $\ker$ respectively.

\subsection{Problem Formulation}\label{sec:prob}
As stated in the introduction, we basically consider two problems related to system class \eqref{eq:sysDVI}.

\subsubsection{Well-posedness of EVI~\eqref{eq:sysDVI}}\label{sec:probSol}
First, we are interested in knowing under what conditions on the system dynamics, a unique solution exists.
To formalize the solution concept for system class \eqref{eq:sysDVI}, we introduce the following set:
\begin{equation}\label{eq:defSetAdm}
\cS_{\text{adm}}^0 := \left\{ \overline x \in \R^n \, \left\vert \, 
\begin{aligned}
& \exists \, \eta \in \R^{d_s} \text{ satisfying } \\
& \eta \in -\cN_{\cS(0)}(H\overline x+J\eta)
\end{aligned}\right.\right\}.
\end{equation}
The following notion of solution for time-varying differential inclusions is borrowed from \cite[Definition~3.1]{Brez73}. 
\begin{defn}\label{def:sol}
The function $x:[0,T] \rightarrow \R^n$ is called a {\em strong} solution to system~\eqref{eq:sysDVI} with initial condition $x(0) \in \cS_{\text{adm}}^0$, if it is locally absolutely continuous, and satisfies $\eqref{eq:sysDVIa}$ for almost every $t \ge 0$, and \eqref{eq:sysDVIb}, \eqref{eq:sysDVIc} hold for each $t \ge 0$.
The continuous function $x:[0,T] \rightarrow \R^n$ is called a {\em weak} solution to system~\eqref{eq:sysDVI} with initial condition $x(0) \in \cS_{\text{adm}}^0$, if there exists a sequence of strong solutions $\{x_k\}_{k=1}^\infty$ to system~\eqref{eq:sysDVI} with initial condition $x_k(0) \in \cS_{\text{adm}}^0$ such that $\{x_k\}_{k=1}^\infty$ converges to $x$ uniformly.
\end{defn}

The constraint on the initial condition $x(0)$ is introduced so that there are no discontinuities in the solution $x$ at time $t =0$, and that the solution starts from the admissible domain. In Section~\ref{sec:sol}, we propose conditions on the system dynamics which guarantee well-posedness for the system class~\eqref{eq:sysDVI}.

\subsubsection{Output Regulation}
For this problem, we restrict ourselves to the case of linear vector fields, and the system class is defined as follows:
\begin{subequations}\label{eq:plant}
\begin{gather}
\dot x(t) = A x(t) + B u(t) + F x_r(t) + G \eta(t) \label{eq:planta} \\
v(t) = Hx(t) + J \eta(t) \label{eq:plantb} \\
\eta(t) \in -\cN_{\cS(t)}(v(t)) \label{eq:plantc}
\end{gather}
\end{subequations}
where $u$ is a control input, and $x_r:[0,\infty) \rightarrow \R^{d_r}$ is the reference signal that is generated from the following equations:
\begin{subequations}\label{eq:sysRef}
\begin{gather}
\dot x_r(t) = A_r x_r(t) + G_r \eta_r(t) \label{eq:sysRefa}\\
v_r(t) = H_rx_r(t) + J_r\eta_r(t) \label{eq:sysRefb} \\
\eta_r(t)  \in - \cN_{\cS(t)}(v_r(t)). \label{eq:sysRefc}
\end{gather}
\end{subequations}
The output regulation variable $w(\cdot)$ is defined as:
\begin{equation}\label{eq:refSig}
w(t) = C x(t) - C_r x_r(t).
\end{equation}
It will be assumed throughout the paper that system~\eqref{eq:sysRef} admits a solution (not necessarily unique) in the sense of Definition~\ref{def:sol}.
We say that the output regulation is achieved if there exists a control input $u$ such that the following properties are satisfied:
\begin{itemize}%[leftmargin=1em]
\item {\bf Well-posedness:} For each initial condition in $\cS_{\text{adm}}^0$, there exists a unique solution to \eqref{eq:sysDVI} in the sense of Definition~\ref{def:sol}.
\item {\bf Regulation:} It holds that $\lim_{t\rightarrow \infty} w(t) = 0$.
\item {\bf Closed-loop stability:} The plant and controller dynamics have a globally asymptotically stable equilibrium at the origin  when $x_r\equiv 0$.
\end{itemize}

\subsection{Motivation}

The solution theory for system~\eqref{eq:sysDVI} could be useful in many aspects since several electrical and mechanical systems are modeled using this framework \cite{Brog16}.
To motivate the output regulation, we mention two possible applications of the proposed problem.

\subsubsection{Viability Control}
As first application, we consider the problem of finding a control input which maintains predefined constraints on the state trajectories of a dynamical system while achieving output regulation. This also relates to the problem of finding control inputs which make the set $\cS(t)$, for each $t \ge 0$, viable in the sense of Aubin \cite{AubiBaye11}. The standard formulations in viability theory treat questions related to existence of inputs which render a stationary set viable. In our work, we are focused on computing the control inputs, by formulating the input as the solution to an optimization problem, which guarantee evolution of the system trajectories within some time-varying sets.
Stated more precisely, suppose that we are given a plant described as:
\begin{equation}\label{eq:refViab}
\begin{aligned}
\dot x(t) &= A x(t) + Bu(t)+Fx_r
\end{aligned}
\end{equation}
and we would like to find a control $u(\cdot)$ which not only tracks a reference trajectory generated by the exosystem of form \eqref{eq:sysRef}, but also results in the state satisfying the constraint that $Hx(t) \in \cS(t)$, for all $t \ge 0$, where $\cS(\cdot)$ is some predefined closed and convex set-valued map.
This could be achieved by decomposing $u$ as $u:=u_\reg + u_\eta$, where we choose $u_\eta(t)$ as the solution of the following variational inequality:
\begin{equation}
\langle u_\eta(t) , v'-Hx(t) \rangle \ge 0, \quad \forall \, v' \in \cS(t).
\end{equation}
This choice of control input transforms the plant equation \eqref{eq:refViab} as follows:
\[
\dot x(t) = Ax(t) + Bu_\reg(t) + Bu_\eta(t) + F x_r(t)
\]
with the constraints
\begin{gather}
v(t) = Hx(t) \in \cS(t) \notag\\
\langle u_\eta(t), v'-Hx(t) \rangle \ge 0, \quad \forall \, v' \in \cS(t). \label{eq:uViab}
\end{gather}
The case of $\cS(\cdot)$ being a time-varying polyhedron was also considered as a special case in our previous work \cite{TanwBrog14b}.
In that case, \eqref{eq:uViab} is formulated as a linear complementarity problem which could be solved very efficiently using standard software packages.

\subsubsection{Regulation in Power Converters}
A large number of electrical circuits with nonsmooth devices (diodes, switches, etc.), such as power converters, are modeled using cone complementarity relations which is a special kind of variational inequality. To see this, we consider a closed convex polyhedral cone\footnote{We call $\cK \subset \R^{d_s}$ a closed convex cone, if for each $v_1,v_2 \in \cK$, and each $\alpha_1,\alpha_2 \ge 0$, we have $\alpha_1 v_1 + \alpha_2v_2 \in \cK$. The set $\cK$ is called a polyhedral set if there is a matrix $R$ such that $\cK=\{v \, \vert \, Rv \ge 0\}$.} $\cK\subseteq\R^{d_s}$.
Let $\cK^*$ denote the dual cone to $\cK$, defined as:
\begin{equation}\label{eq:defDual}
\cK^*:= \{\eta \in \R^{d_s} \,|\, \langle \eta, v \rangle \ge 0, \forall v \in \cK \}.
\end{equation}
Our framework allows us to consider the models of electrical systems of the following form:
\begin{subequations}\label{eq:sysElec}
\begin{gather}
\dot x(t) = A x(t) + Bu(t) + F x_r(t) + B_{\text{ext}} f_{\text{ext}} (t)+ G \eta(t) \\
v(t) = Hx(t) + J \eta(t) + h(t) \label{eq:sysElecb}\\
\cK \ni v(t) \perp \eta(t) \in \cK^*, \label{eq:sysElecc}
\end{gather}
\end{subequations}
where $f_{\text{ext}}$ and $h$ are sufficiently regular functions of time, and the goal is to design $u$ to solve an appropriate regulation problem.
The notation $a \perp b$ is a short-hand for writing $a^\top b = 0$. See \cite{AcarBonn11, VascCaml09} for examples.
Since $\cK$ is assumed to be a cone, the cone complementarity problem \eqref{eq:sysElecc} is equivalent to~\cite[Proposition~1.1.3]{FaccPang03}:
\[
\eta(t) \in -\cN_{\cK}(v(t)).
\]
If we let $\cS(t):=\cK - h(t) := \{\tilde v \in \R^{d_s}\, \vert \, \tilde v + h(t) \in \cK\}$, then the above inclusion is equivalently written as:
\[
\eta(t) \in -\cN_{\cS(t)}(Hx(t) + J\eta(t))
\]
and hence system~\eqref{eq:sysElec} can always be written in the form \eqref{eq:sysDVI}. The results on regulation of system class~\eqref{eq:plant} can thus be applied to some extent to solve regulation problems in circuits modeled by \eqref{eq:sysElec}.
%\begin{gather*}
%\bar v(t) = Hx(t) + J \eta(t) \\
%\eta(t) \in - \cN_{\overline \cK(t)}(\overline v(t))
%\end{gather*}
%and hence the system \eqref{eq:sysElec} is of the same form as \eqref{eq:plant}.
%In \eqref{eq:sysDVI}, if we let $\cS(t) = \cK$ for each $t \ge 0$, then because of the conic structure, \eqref{eq:sysDVI} is equivalent to:
%\begin{equation}
%\end{equation}
\begin{rem}
In \eqref{eq:sysElec}, the variable $v$ has an additional time-dependent term which we didn't add in \eqref{eq:sysDVI}. A simple algebraic manipulation allows us to (re)define the set-valued map that brings us back to the case stated in \eqref{eq:sysDVI}. However, we limit ourselves to polyhedral sets when introducing this manipulation because for existence of solutions, the distance between the values of $\cS$ at any two time instants must be bounded by the difference of an absolutely continuous function at those time instants. The function $h$ provides this bound when $\cS$ is a polyhedral set-valued mapping (see Lemma~\ref{lem:hausBndK} in Appendix~\ref{app:litLemmas}), but in general, it may not.
\end{rem}

\subsection{Basics from Set-Valued Analysis}
In this section, we recall some basic results from convex analysis which are used in the subsequent sections for deriving the main results of this paper.
One can consult standard references, such as \cite{RockWets98}, for the results given here.

A set-valued map $\Phi(\cdot)$ is called {\em maximal monotone} if for each $x_1,x_2 \in \dom (\Phi)$, and $y_i \in \Phi (x_i)$, $i = 1,2$, we have $\langle y_2 - y_1, x_2 - x_1\rangle \ge 0$, and the graph of $\Phi$ cannot be extended any further while satisfying the monotonicity property.
An important thing to note is that there is a vast literature on the solution theory of differential inclusions where the multivalued operator on the right-hand side is maximal monotone \cite{Brez73}.

The following result allows us to draw connection between system~\eqref{eq:sysDVIa}, \eqref{eq:incCone} and the theory of maximal monotone operators. To do so, for a extended real-valued convex function $\psi:\R^{d_s} \to [-\infty, +\infty]$, we define the subdifferential of $\psi$ at $v \in \R^{d_s}$, denoted by $\partial \psi (v)$, as follows:
\[
\partial \psi(v) := \{\lambda \in \R^{d_s} \, |\, \langle \lambda, v'- v \rangle \le \psi(v') - \psi(v), \forall \, v' \in \dom f\}.
\]
\begin{prop}\label{prop:coneBasic}
Consider a nonempty, closed and convex set $\cS$ and let $\psi_\cS(\cdot)$ denote its indicator function, that is, $\psi_{\cS}(v) = 0$, if $v \in \cS$ and $\psi_S(v) = +\infty$ otherwise; Then
\begin{enumerate}
\item it holds that $\partial{\psi_\cS}(v) = \cN_{\cS}(v)$,
\item and $\cN_{\cS}(\cdot)$ is a maximal monotone operator.
\end{enumerate}
\end{prop}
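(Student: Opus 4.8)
The plan is to treat the two claims in order, obtaining the first by unwinding the definitions and the second from the first together with a classical surjectivity argument.

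First I would establish (1) by a case distinction on whether $v \in \cS$. If $v \notin \cS$, then $v \notin \dom \psi_{\cS}$, so $\partial \psi_{\cS}(v) = \emptyset$, which matches the stated convention $\cN_{\cS}(v) = \emptyset$. If $v \in \cS$, then $\psi_{\cS}(v) = 0$, and a vector $\lambda$ lies in $\partial \psi_{\cS}(v)$ precisely when $\langle \lambda, v' - v \rangle \le \psi_{\cS}(v')$ for all $v'$. Since this inequality is vacuous for $v' \notin \cS$ (the right-hand side is $+\infty$) and reduces to $\langle \lambda, v' - v \rangle \le 0$ for $v' \in \cS$, the defining condition coincides exactly with membership in $\cN_{\cS}(v)$. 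Hence $\partial \psi_{\cS}(v) = \cN_{\cS}(v)$ in all cases.

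For (2), monotonicity of $\cN_{\cS}$ is immediate: taking $\lambda_i \in \cN_{\cS}(v_i)$ for $i = 1,2$, the defining inequalities $\langle \lambda_1, v_2 - v_1 \rangle \le 0$ and $\langle \lambda_2, v_1 - v_2 \rangle \le 0$ add to $\langle \lambda_2 - \lambda_1, v_2 - v_1 \rangle \ge 0$. The delicate part is maximality, which I would obtain from Minty's criterion: a monotone operator $\Phi$ on $\R^{d_s}$ is maximal iff $\rge(I + \Phi) = \R^{d_s}$. To verify the range condition for $\Phi = \cN_{\cS}$, I would use the metric projection $\proj_{\cS}$, which is well-defined and single-valued on all of $\R^{d_s}$ precisely because $\cS$ is nonempty, closed, and convex. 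For arbitrary $y \in \R^{d_s}$, setting $v := \proj_{\cS}(y)$, the variational characterization of the projection gives $\langle y - v, v' - v \rangle \le 0$ for all $v' \in \cS$, i.e. $y - v \in \cN_{\cS}(v)$, so $y \in v + \cN_{\cS}(v) = (I + \cN_{\cS})(v)$. Thus $\rge(I + \cN_{\cS}) = \R^{d_s}$, and maximality follows.

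The main obstacle is exactly this maximality step; claim (1) and the monotonicity half of (2) are direct consequences of the definitions. An equivalent route, which I would mention as an alternative, is to observe that under the hypotheses the indicator $\psi_{\cS}$ is a proper, lower semicontinuous, convex function, and then invoke Rockafellar's classical theorem that the subdifferential of such a function is maximal monotone; combined with (1) this yields (2) at once. In either case it is the nonemptiness, closedness, and convexity of $\cS$ that furnish the single-valued projection (equivalently, make the subdifferential theorem applicable), so these are the precise hypotheses the argument consumes.
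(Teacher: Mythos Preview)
Your proof is correct in both parts. Note, however, that the paper does not actually supply a proof of this proposition: it is presented as a standard result from convex analysis, with the reader referred to \cite{RockWets98}. So there is no detailed argument in the paper to compare against.

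That said, your alternative route---observing that $\psi_{\cS}$ is proper, lower semicontinuous, and convex (which uses exactly the hypotheses on $\cS$), and then invoking Rockafellar's theorem that the subdifferential of such a function is maximal monotone---is precisely the one-line justification a reader following the paper's reference would extract. Your primary route via Minty's surjectivity criterion and the metric projection is more self-contained and makes explicit why each hypothesis on $\cS$ is needed (nonempty and closed for existence of the projection, convex for its variational characterization), at the modest cost of being longer. Both are standard and either would be acceptable here.
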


In our approach, we would like to express \eqref{eq:sysDVI} as a differential inclusion by replacing $\eta$ with a set-valued operator.
In order to do that, one can see from $\eqref{eq:incCone}$ that we would need to define the ``inverse'' of the normal cone operator.
The theory of conjugate functions (or Legendre-Fenchel transforms) \cite[Chapter~11]{RockWets98} allows us to make this  connection.
%It follows from Proposition~\ref{prop:coneBasic} and \eqref{eq:incCone} that
%\[
%\eta(t) \in -\partial \psi_{S(t)}(v(t)), t \ge 0.
%\]

\begin{defn}
For a function $\varphi:\R^n \rightarrow [-\infty,+\infty]$, the function $\varphi^*: \R^n \rightarrow [-\infty,+\infty]$ defined as:
\[
\varphi^*(\eta):= \sup_{v} \, \{\langle\eta,v\rangle - \varphi(v)\}
\]
is called the conjugate of $\varphi$.
For a closed convex set $\cS$, the conjugate of the indicator function $\psi_\cS(\cdot)$ is the support function $\sigma_\cS(\cdot)$ defined as:
\[
\sigma_\cS(\eta) = \sup_{v\in \cS} \, \langle v, \eta \rangle .
\]
\end{defn}
We now recall the following fundamental result:
\begin{prop}[{\cite[Proposition~11.3]{RockWets98}}]\label{prop:invConj}
For any proper, lower semicontinuous, convex function $\varphi(\cdot)$, one has $\partial \varphi^* = (\partial \varphi)^{-1}$ and $\partial \varphi = (\partial \varphi^*)^{-1}$. That is,
\begin{equation}
\eta \in \partial \varphi (v) \Longleftrightarrow v \in \partial \varphi^*(\eta).
\end{equation}
In particular, for a closed and convex set $\cS$:
\[
\eta \in \cN_{\cS}(v) \Longleftrightarrow v \in \partial \sigma_{\cS}(\eta).
\]
\end{prop}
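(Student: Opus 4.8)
The plan is to derive both subdifferential inversions from the Fenchel--Young inequality together with the biconjugate identity $\varphi^{**} = \varphi$. The whole argument hinges on a single \emph{symmetric} characterization of the subdifferential as the equality case of the Fenchel--Young inequality. First I would record that inequality: directly from the definition of the conjugate,
\[
\varphi^*(\eta) = \sup_{v'} \, \{\langle \eta, v'\rangle - \varphi(v')\} \ge \langle \eta, v\rangle - \varphi(v),
\]
so that $\varphi(v) + \varphi^*(\eta) \ge \langle v, \eta\rangle$ holds for every pair $(v,\eta)$.

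Next I would establish the key equivalence: $\eta \in \partial\varphi(v)$ if and only if equality holds above. Unwinding the definition of the subdifferential, $\eta \in \partial \varphi(v)$ means $\langle \eta, v' - v\rangle \le \varphi(v') - \varphi(v)$ for all $v'$, which rearranges to $\langle \eta, v'\rangle - \varphi(v') \le \langle \eta, v\rangle - \varphi(v)$ for all $v'$; taking the supremum over $v'$ on the left shows this is equivalent to $\varphi^*(\eta) = \langle \eta, v\rangle - \varphi(v)$, i.e.\ to $\varphi(v) + \varphi^*(\eta) = \langle v, \eta\rangle$. The crucial feature is that this equality is symmetric in the roles of $(\varphi, v)$ and $(\varphi^*, \eta)$.

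Applying the same equivalence with $\varphi$ replaced by $\varphi^*$ (itself proper, lower semicontinuous, and convex) gives $v \in \partial\varphi^*(\eta)$ if and only if $\varphi^*(\eta) + \varphi^{**}(v) = \langle \eta, v\rangle$. At this point I would invoke the Fenchel--Moreau theorem, which for proper, lower semicontinuous, convex $\varphi$ yields $\varphi^{**} = \varphi$; substituting turns the last equality into $\varphi(v) + \varphi^*(\eta) = \langle v, \eta\rangle$, exactly the condition characterizing $\eta \in \partial\varphi(v)$. Chaining the two equivalences proves $\eta \in \partial\varphi(v) \Longleftrightarrow v \in \partial\varphi^*(\eta)$, and hence $\partial\varphi = (\partial\varphi^*)^{-1}$ and $\partial\varphi^* = (\partial\varphi)^{-1}$.

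The main obstacle is the appeal to $\varphi^{**} = \varphi$: this is precisely where properness, lower semicontinuity, and convexity are indispensable, and its own proof rests on a Hahn--Banach separation of a point below the graph from the closed convex epigraph of $\varphi$. Everything else is a direct manipulation of definitions. Finally, for the stated special case I would take $\varphi = \psi_\cS$, which is proper (as $\cS \neq \emptyset$), lower semicontinuous (as $\cS$ is closed), and convex (as $\cS$ is convex); then $\partial\psi_\cS = \cN_\cS$ by Proposition~\ref{prop:coneBasic} and $\psi_\cS^* = \sigma_\cS$ by definition, so the general equivalence specializes to $\eta \in \cN_\cS(v) \Longleftrightarrow v \in \partial\sigma_\cS(\eta)$.
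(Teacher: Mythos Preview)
Your proof is correct and is the standard argument via the Fenchel--Young equality characterization combined with the Fenchel--Moreau biconjugate theorem. Note, however, that the paper does not supply its own proof of this proposition: it is stated with a direct citation to \cite[Proposition~11.3]{RockWets98} and left unproved. What you have written is essentially the textbook derivation one finds in that reference, so there is nothing to compare beyond observing that you have filled in what the paper deliberately outsourced.
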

Finally, the last notion we need is to quantify the distance between two sets in an appropriate manner.
\begin{defn}\label{def:haus}
The Hausdorff distance between two sets $S_1,S_2 \subseteq \R^{d_s}$, denoted by $d_{\Haus}(S_1,S_2)$, is defined as:
\[
d_{\Haus}(S_1,S_2) := \sup\left\{\sup_{v_1\in S_1}d(v_1,S_2),\sup_{v_2\in S_2}d(v_2,S_1)\right\}
\]
where $d(v,S) = \inf_{w\in \cS}\vert v-w \vert$ is the usual Euclidean distance between a point and the set.

\end{defn}

\section{Well-posedness of the Time-Varying EVIs}\label{sec:sol}
There is a considerable amount of literature on the solution theory for differential inclusions, depending on the structure of the set-valued map on the right-hand side.
One particular class of set-valued maps, which are interesting from the point of analysis and applications, are maximal monotone operators, and the solution theory for differential inclusions with such operators has been well-studied since the work of Br\'ezis \cite{Brez73}.
For our setup, such inclusions are of particular interest because we will now show that, when the set $\cS(t)$ is closed and convex valued, for each $t \ge 0$, then equation~\eqref{eq:sysDVI} can be equivalently written as a differential inclusion with time-varying maximal monotone operator plus a globally Lipschitz vector field on the right-hand side.

To see this, we use Propositions~\ref{prop:coneBasic} and \ref{prop:invConj}, and describe the relations in \eqref{eq:sysDVIa}, \eqref{eq:sysDVIb}, using a set-valued map for $\eta(t)$ as follows:
\begin{subequations}\label{eq:eqSetInc}
\begin{align}
& \langle v'- v(t), \eta(t)\rangle  \ge 0, \quad \forall \, v'\in \cS(t)\label{eq:eqSetInca}\\
\Longleftrightarrow \quad & \eta(t) \in -\partial \psi_{S(t)} (Hx(t)+J\eta(t))\label{eq:eqSetIncb}\\
\Longleftrightarrow  \quad & Hx(t) + J\eta(t) \in \partial \sigma_{\cS(t)}(-\eta(t))\label{eq:eqSetIncc}\\
\Longleftrightarrow  \quad & Hx(t) \in \left(\partial \sigma_{\cS(t)} + J \right) (-\eta(t)) \label{eq:eqSetIncd}\\
\Longleftrightarrow  \quad & -\eta(t) \in \left(\partial \sigma_{\cS(t)} + J \right)^{-1} (Hx(t)). \label{eq:eqSetInce}
\end{align}
\end{subequations}
Thus, if we introduce the operator $\Phi$ as follows:
\begin{equation}\label{eq:defPhi}
\begin{aligned}
\Phi: [0,\infty) \times \R^{d_s} & \rightrightarrows \R^{d_s} \\
(t,v) & \mapsto (\partial \sigma_{\cS(t)} + J)^{-1}(v),
\end{aligned}
\end{equation}
then system~\eqref{eq:sysDVI} can be equivalently written as the following differential inclusion:
\begin{equation}\label{eq:sysDI}
\dot x(t) \in f(t,x) - G \, \Phi(t,Hx(t)).
\end{equation}

It is an easy exercise to show that the operator $\Phi(t,\cdot)$ is maximal monotone for each $t \ge 0$ (see also the proof of Lemma~\ref{lem:maxPhi} in Section~\ref{sec:pfSolThm}) but it is not true in general that $G \, \Phi(t,H\cdot)$ is also maximal monotone.
If it is assumed that $f(t,x) = Ax + u(t)$, and that the LTI system defined using the matrices $(A,G,J,H)$ is passive and $\Phi$ is time-independent, then the maximal monotonicity of the multivalued operator on the right-hand side of \eqref{eq:sysDI} was proven in \cite{CamlSchu15}.
Our goal in this section is to generalize this result for the class of systems \eqref{eq:sysDI}, and the contribution of what follows in this section could be seen in following two regards:
\begin{itemize}%[leftmargin=1em]
\item A direct approach (different than \cite{CamlSchu15}) to transform the right-hand side of \eqref{eq:sysDI} into a maximal monotone operator (with a minus sign) and a Lipschitz vector field, and then study the solutions of the resulting differential inclusion.
\item Generalize the system class by addressing time-dependent set-valued maps and nonlinear vector fields.
\end{itemize}

%\subsection{Solution of Differential Inclusion~\eqref{eq:sysDI}}
%The major difficulty in studying the solutions of \eqref{eq:sysDI} is to show that the operator $G\Phi(t,H\cdot)$ is maximal monotone for each $t\ge 0$, since this property is not generally preserved under linear transformations.
%Note that the operator $\Phi(t,\cdot)$, for each $t \ge 0$, is maximal monotone under certain constraint qualifications which preserve maximal monotonicity under sum and inverse operation.
%Compared to the standard formulations of evolution equations with maximal monotone operators, the primary difficulty in studying the well-posedness of \eqref{eq:sysDI} is that the operator $G\Phi(t,H\cdot)$ may not necessarily be monotone for some arbitrary matrices $G$ and $H$ because multiplication with matrices does not preserve monotonicity.
%Secondly, the right-hand side depends on time and we need to impose some constraints on the evolution of the set-valued map $\cS(\cdot)$ along with some regularity assumptions on $f(\cdot,x)$.
The main result highlighting these contributions is stated as follows:
%se arguments are slightly technical in nature, so we present the result and sketch the arguments for the proof.

\begin{thm} \label{thm:solMain}Assume that the following holds:
\begin{ass}[leftmargin=3em]

\item \label{ass:kerJ}The matrix $J$ is positive semidefinite and there exists a symmetric positive definite matrix $P$ such that $\ker(J+J^\top) \subseteq \ker (PG - H^\top)$.

\item \label{ass:regF}There exists a nonnegative locally essentially bounded function $\rho:[0,\infty) \rightarrow [0,\infty)$ such that
\[
|f(t,x_1) - f(t,x_2)| \le \rho(t) |x_1-x_2|, \quad \forall \, x_1,x_2 \in \R^n.
\]

\item \label{ass:qual} For each $t \ge 0$, $\rge H \, \cap \rint (\rge(\partial \sigma_{\cS(t)}+ J)) \neq \emptyset$.

\item \label{ass:minNorm} For every $t \ge 0$, and each $v \in \rge H \, \cap \rge(\partial \sigma_{\cS(t)}+ J)$, it holds that $\rge(J+J^\top) \cap (\partial\sigma_{S(t)} + J)^{-1}(v) \neq \emptyset$.

\item \label{ass:contS} It holds that $\rge J \subseteq \rge H$ and $\cS:[0,\infty) \rightrightarrows \R^{d_s}$ is closed and convex valued for each $t \ge 0$. Also, the mapping $\cS \cap \rge H$ varies in an absolutely continuous manner with time, that is, there exists a locally absolutely continuous function $\mu: [0,\infty) \rightarrow \R_+$, such that
\[
d_{\Haus}(\cS(t_1) \cap \rge H, \cS(t_2) \cap \rge H) \le |\mu(t_1) - \mu(t_2)|, \  \forall \, t_1, t_2 \ge 0,
\]
where $d_{\Haus}$ denotes the Hausdorff distance introduced in Definition~\ref{def:haus}.
\end{ass}

Then, for each $T \in [0,\infty)$, and $x(0)$ satisfying $Hx(0) \in \rge (\partial\sigma_{\cS(0)}+ \rge J)$, there exists a unique weak solution to \eqref{eq:sysDI}, and hence to \eqref{eq:sysDVI}, in the sense of Definition~\ref{def:sol} over the compact interval $[0,T]$.
\end{thm}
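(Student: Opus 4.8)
The plan is to exploit the equivalent differential-inclusion form \eqref{eq:sysDI}, namely $\dot x\in f(t,x)-G\,\Phi(t,Hx)$ with $\Phi$ as in \eqref{eq:defPhi}, and to place it inside Br\'ezis' solution theory for evolution problems governed by a \emph{time-dependent} maximal monotone operator perturbed by a Lipschitz vector field. The matrix $P$ from Assumption~\ref{ass:kerJ} supplies the right geometry: I would work throughout in the weighted inner product $\langle a,b\rangle_P:=a^\top P b$, with induced norm $|\cdot|_P$, and aim to show that, in this metric, $x\mapsto G\,\Phi(t,Hx)$ is maximal monotone \emph{up to} a globally Lipschitz perturbation, uniformly in $t$.

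The core computation is a one-sided Lipschitz (hypomonotonicity) estimate. Fix $t$ and take $-\eta_i\in\Phi(t,Hx_i)$, $i=1,2$; by definition of $\Phi$ this means $Hx_i+J\eta_i\in\partial\sigma_{\cS(t)}(-\eta_i)$, so monotonicity of the subdifferential $\partial\sigma_{\cS(t)}$ yields, writing $\Delta x:=x_1-x_2$ and $\Delta\eta:=\eta_1-\eta_2$,
\[
\langle H\Delta x,\Delta\eta\rangle\le-\tfrac12\langle(J+J^\top)\Delta\eta,\Delta\eta\rangle\le 0 .
\]
Decomposing $\Delta\eta=\Delta\eta_0+\Delta\eta_1$ along $\ker(J+J^\top)\oplus\rge(J+J^\top)$ and using Assumption~\ref{ass:kerJ} to annihilate the kernel part, so that $(PG-H^\top)\Delta\eta=(PG-H^\top)\Delta\eta_1$, together with the strict positivity of $J+J^\top$ on $\rge(J+J^\top)$, a Young's-inequality argument produces a constant $L$ with
\[
\langle G\Delta\eta,\Delta x\rangle_P=\Delta x^\top PG\,\Delta\eta\le L\,|\Delta x|^2 .
\]
Hence $x\mapsto G\,\Phi(t,Hx)+L'x$ is monotone in the $P$-metric for $L'$ large enough, i.e. the right-hand side of \eqref{eq:sysDI} splits as a maximal monotone operator plus a Lipschitz term — the decomposition announced in the introduction. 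Crucially, $\Delta\eta_0$ drops out of both terms, which is exactly what shows the solution $x$ is insensitive to the (possibly multivalued) kernel directions of $\eta$.

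To upgrade monotonicity to \emph{maximal} monotonicity I would invoke the constraint qualification \ref{ass:qual}: together with Propositions~\ref{prop:coneBasic}--\ref{prop:invConj} and the standard subdifferential sum rule it yields the range condition defining maximality of the reduced operator $\cA(t,\cdot)$, for each fixed $t$. Assumption~\ref{ass:minNorm} is used to guarantee that $\Phi(t,\cdot)$ is nonempty-valued on $\rge H$ and, since $\partial\sigma_{\cS(t)}+J$ is strongly monotone on $\rge(J+J^\top)$, that the selection into $\rge(J+J^\top)$ is single-valued and Lipschitz; this turns the Lipschitz piece of the decomposition into a bona fide function $g(t,x)$, to which the Lipschitz bound of \ref{ass:regF} can be adjoined. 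For the time-dependence, only $Hx$ feeds into $\Phi$, so the operator depends on $t$ only through $\cS(t)\cap\rge H$; Assumption~\ref{ass:contS} states precisely that this intersection moves absolutely continuously in Hausdorff distance, which — after transporting to the $P$-metric, where equivalence of norms preserves absolute continuity — is the regularity hypothesis required by Br\'ezis' theorem for $\dot x\in-\cA(t,x)+g(t,x)$. The initial condition $Hx(0)\in\rge(\partial\sigma_{\cS(0)}+\rge J)$ guarantees $\Phi(0,Hx(0))\neq\emptyset$, i.e. that $x(0)$ lies in the domain of the operator at $t=0$.

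Assembling these pieces, Br\'ezis' existence--uniqueness theorem for Lipschitz-perturbed, absolutely-continuously-varying maximal monotone operators delivers a unique solution on each $[0,T]$; uniqueness also follows directly from the hypomonotonicity estimate via Gr\"onwall applied to $\ddt|\Delta x|_P^2$, and the weak solution is then obtained as a uniform limit of strong solutions in the sense of Definition~\ref{def:sol}. I expect the main obstacle to be the third step: under the \emph{weakened} Assumption~\ref{ass:kerJ}, which imposes only the kernel inclusion $\ker(J+J^\top)\subseteq\ker(PG-H^\top)$ rather than a full passivity LMI, one must simultaneously produce the single-valued Lipschitz selection realizing the decomposition and verify both maximality of the $t$-dependent operator and the absolute continuity of its domain in the weighted metric. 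Reconciling the genuine set-valuedness of $\Phi$ in the kernel directions with the single-valuedness needed for the Lipschitz perturbation, while keeping the qualification conditions \ref{ass:qual}--\ref{ass:minNorm} intact across all $t$, is the delicate part; the remaining estimates and the Gr\"onwall uniqueness argument are routine.
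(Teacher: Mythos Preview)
Your overall structure matches the paper's: split $G\,\Phi(t,H\cdot)$ into a single-valued Lipschitz piece (the selection into $\rge(J+J^\top)$) plus a maximal monotone multivalued piece, via the $P^{1/2}$-coordinate change (equivalently, your weighted inner product). The hypomonotonicity computation, the role of \ref{ass:kerJ} in annihilating the kernel component, and the use of \ref{ass:minNorm} for the Lipschitz selection correspond to the paper's Lemmas~\ref{lem:kerJ}--\ref{lem:minNormLip}, and \ref{ass:qual} is indeed what makes $R^{-1}H^\top\Phi(t,HR^{-1}\cdot)$ maximal monotone (Lemma~\ref{lem:linMaxMon}).

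The gap is in the final step. There is no theorem in Br\'ezis' book that handles a \emph{time-varying} maximal monotone operator perturbed by a \emph{state-dependent} Lipschitz map; his time-dependent results concern subdifferentials of convex functions with essentially fixed domain. The paper instead invokes Kunze--Monteiro Marques \cite[Theorem~3]{KunzMont97} (Theorem~\ref{thm:solMaxMonTV} in the appendix), whose hypotheses are not what you state: one needs \ref{hyp:minNorm} a linear-growth bound on the least-norm element of $\Psi(t,z)$, and \ref{hyp:contDom} a bound on the pseudo-distance $d_{\svm}(\Psi(t_1,\cdot),\Psi(t_2,\cdot))$ defined in \eqref{eq:defdsvm} between the \emph{graphs} of the operators --- not Hausdorff continuity of their domains. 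Showing that \ref{ass:contS} implies \ref{hyp:contDom} for $\Psi$ is a genuine calculation (Lemma~\ref{lem:solIncmu0}), and \ref{hyp:minNorm} is where \ref{ass:minNorm} is used a second time, to bound $|\lambda^{\im}(t,x)|$ via $c_J\langle J\lambda^{\im},\lambda^{\im}\rangle\le c_J\langle\lambda^{\im},Hx-v_t\rangle$. Finally, Kunze--Monteiro Marques only treats $\dot z\in-\Psi(t,z)+\tilde g(t)$ for a \emph{given} function $\tilde g$ of time; the state-dependent Lipschitz term $g(t,z)$ is handled afterwards by a Picard iteration $\dot z_{i+1}\in-\Psi(t,z_{i+1})+g(t,z_i)$ together with the Gr\"onwall-type estimate you mention, producing the weak solution as the uniform limit of the $z_i$. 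You have telescoped these three distinct steps into a single citation that does not exist, and you have also misidentified the delicate part: the decomposition itself (your ``third step'') is clean once Lemmas~\ref{lem:kerJ}--\ref{lem:minNormLip} are in place; the real work is verifying \ref{hyp:minNorm}--\ref{hyp:contDom} for the transformed operator $\Psi$.
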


\subsection{Discussions}
\begin{enumerate}%[leftmargin=1em]
\item If $J = 0$, then $\Phi(t,Hx) = \cN_{\cS(t)}(Hx)$, and \ref{ass:kerJ} basically implies that $PG = H^\top$. Well-posedness for that case was studied in \cite{Brog04, BrogGoel13}.
It is also seen that if the matrices $A,G,H,J$ satisfy a certain linear matrix inequality, then the assumption \ref{ass:kerJ} automatically holds \cite{CamlIann14} and this fact was used in \cite{CamlSchu15} for the static case where $\cS$ is not time-varying.

\item The global Lipschitz-like condition on the function $f(t, \cdot)$ is imposed because we are seeking solutions for all times, and not just the existence of an interval over which the solution is defined.

\item The constraint qualification \ref{ass:qual} is somewhat a standard assumption when dealing with problems on variational inequalities, as it ensures that the relative interior of the domain of the set-valued operator in \eqref{eq:sysDI} is nonempty. For the case $J=0$, we recall that $\rge \partial\sigma_{\cS(t)} = \cS(t)$ for each $t \ge 0$, and hence \ref{ass:qual} is consistent with \cite{Brog04}.

\item The condition \ref{ass:minNorm} provides an upper bound on the minimum norm element of the set $(\partial \sigma_{\cS(t)}+J)^{-1}(z)$ whenever $z$ is in the admissible domain (see Lemma~\ref{lem:minNormLip} in Section~\ref{sec:pfSolThm} and the discussion in Section~\ref{sec:commPf}). Our proof of Theorem~\ref{thm:solMain} requires a result on existence of solution of differential inclusions with time-varying maximal monotone operators \cite[Theorem~3]{KunzMont97}, which in turn requires the bound obtained by \ref{ass:minNorm}. It holds trivially when $J=0$.

\item The restrictive element of \ref{ass:contS} is $\rge J \subseteq \rge H$ (which would hold when $\rank H = d_s$, or $J = 0$). This condition combined with \ref{ass:qual} ensures that $\rge H \cap \cS(t) \neq \emptyset$, for each $t \ge 0$. The bound on the variation of $\cS \cap \rge H$ in \ref{ass:contS} is introduced to obtain absolutely continuous trajectories.
One can also think of relaxing this assumption to allow $\mu$ to be a function of bounded variation and this is briefly discussed in Section~\ref{sec:solBV}.
\end{enumerate}

\subsection{Proof of Theorem~\ref{thm:solMain}}\label{sec:pfSolThm}
The proof is based on several intermediate results which will be introduced as lemmas.
We first state some desired properties of the operator $\Phi$ introduced in \eqref{eq:defPhi} in Lemmas~\ref{lem:maxPhi}--\ref{lem:kerJ} .

\begin{lem}\label{lem:maxPhi}
For each $t \ge 0$, the operator $v \mapsto (\partial \sigma_{\cS(t)} + J)^{-1}(v)$ is maximal monotone.
\end{lem}

\begin{proof}%{of Lemma~\ref{lem:maxPhi}}
First, it is noted that $\sigma_{\cS(t)}$ is sublinear~\cite[Theorem~8.24]{RockWets98}, and hence a convex functional, so that $\partial \sigma_{\cS(t)}$ is maximal monotone for each $t \ge 0$, and $\rint (\dom \partial \sigma_{\cS(t)}) \neq \emptyset$ because $\cS(t)$ is convex-valued and $\rint(\cS(t)) \neq \emptyset$.
The matrix $J$ defines a monotone mapping, which is continuous, and thus maximal monotone \cite[Example~12.7]{RockWets98}.
Also, $\dom J$ is $\R^{d_s}$.
It now holds that $(\partial \sigma_{\cS(t)} + J)$ is maximal monotone because $\rint(\dom \partial \sigma_{\cS(t)}) \cap \rint (\dom J) \neq \emptyset$ \cite[Corollary~12.44]{RockWets98}.
Hence, $(\partial \sigma_{\cS(t)}+ J)^{-1}$ is also maximal monotone, because the inverse operation preserves maximal monotonicity~\cite[Exercise~12.8(a)]{RockWets98}.
\end{proof}

\begin{lem}\label{lem:kerJ}
For a given $t \ge 0$, and $x \in \R^n$, consider $\lambda_{1}, \lambda_{2} \in \Phi(t,Hx)$, then $\lambda_1 - \lambda_2 \in \ker (J+J^\top)$.
\end{lem}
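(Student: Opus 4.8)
The plan is to unfold the definition of $\Phi$ and then combine the monotonicity of $\partial\sigma_{\cS(t)}$ with the positive semidefiniteness of $J$. Recall from \eqref{eq:defPhi} that $\lambda_i \in \Phi(t,Hx) = (\partial\sigma_{\cS(t)} + J)^{-1}(Hx)$ is equivalent to $Hx \in \partial\sigma_{\cS(t)}(\lambda_i) + J\lambda_i$ for $i=1,2$. Hence, setting $z_i := Hx - J\lambda_i$, we have $z_i \in \partial\sigma_{\cS(t)}(\lambda_i)$, and subtracting the two identities yields
\[
z_1 - z_2 = -J(\lambda_1 - \lambda_2).
\]

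Next I would invoke monotonicity. As noted in the proof of Lemma~\ref{lem:maxPhi}, $\sigma_{\cS(t)}$ is sublinear, hence convex, so its subdifferential $\partial\sigma_{\cS(t)}$ is a monotone operator; since $z_i \in \partial\sigma_{\cS(t)}(\lambda_i)$, this gives $\langle z_1 - z_2, \lambda_1 - \lambda_2\rangle \ge 0$. Substituting the identity from the previous step, and writing $\delta := \lambda_1 - \lambda_2$ for brevity, we obtain $\langle -J\delta, \delta\rangle \ge 0$, that is, $\langle J\delta, \delta\rangle \le 0$. On the other hand, assumption~\ref{ass:kerJ} asserts that $J$ is positive semidefinite, so $\langle J\delta,\delta\rangle \ge 0$. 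The two inequalities force $\langle J\delta, \delta\rangle = 0$.

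It then remains to convert the vanishing of this quadratic form into membership in $\ker(J+J^\top)$. Since $\delta^\top J \delta$ is a scalar, it equals its transpose $\delta^\top J^\top \delta$, so $\langle J\delta, \delta\rangle = \tfrac{1}{2}\,\delta^\top (J + J^\top)\delta = 0$. The matrix $J + J^\top$ is symmetric and positive semidefinite, so it admits a factorization $J + J^\top = M^\top M$ (e.g.\ via its symmetric square root); consequently $|M\delta|^2 = \delta^\top(J+J^\top)\delta = 0$, whence $M\delta = 0$ and therefore $(J+J^\top)\delta = M^\top M \delta = 0$. This is exactly $\delta = \lambda_1 - \lambda_2 \in \ker(J+J^\top)$, as claimed.

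I do not expect a genuine obstacle here: the argument is a short monotonicity-plus-semidefiniteness computation. The only step meriting care is the last one, namely passing from the vanishing quadratic form $\langle J\delta,\delta\rangle = 0$ to the kernel condition $(J+J^\top)\delta = 0$; this is precisely where the symmetric positive semidefinite structure of $J+J^\top$ (rather than mere semidefiniteness of $J$) is used, and it is what makes \ref{ass:kerJ} the natural hypothesis to phrase in terms of $\ker(J+J^\top)$.
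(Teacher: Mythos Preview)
Your proof is correct and follows essentially the same route as the paper: unfold $\Phi$, use monotonicity of $\partial\sigma_{\cS(t)}$ to get $\langle J\delta,\delta\rangle \le 0$, then combine with positive semidefiniteness of $J$. The only difference is that you spell out explicitly why $\langle J\delta,\delta\rangle = 0$ forces $(J+J^\top)\delta = 0$, whereas the paper states this in one line.
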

\begin{proof}%{of Lemma~\ref{lem:kerJ}}
For $i =1,2$, consider $\lambda_i \in (\partial \sigma_{\cS(t)} + J)^{-1}(Hx)$, then $Hx - J\lambda_i \in \partial \sigma_{\cS(t)} (\lambda_i)$. Since $\partial \sigma_{\cS(t)}$ is monotone, we have
\[
\langle Hx - J \lambda_1 - Hx + J \lambda_2, \lambda_1 - \lambda_2 \rangle \ge 0
\]
or
\[
\langle J (\lambda_1 - \lambda_2), \lambda_1 - \lambda_2 \rangle \le 0.
\]
It follows that $\lambda_1-\lambda_2 \in \ker (J+J^\top)$, because $J$ is positive semidefinite.
%From \eqref{eq:eqSetIncc}, we have $Hx + J \lambda_i \in \partial \sigma_{\cS(t)}(-\lambda_i)$, $i = 1,2$
\end{proof}
For each $\lambda_\alpha \in \Phi(t,Hx)$ (with $t$ and $x$ fixed), it follows from Lemma~\ref{lem:kerJ} that the projection of $\lambda_{\alpha}$ on $\rge(J+J^\top)$, denoted by $\cP_J(\lambda_\alpha)$, is the same for any $\alpha$. We use the notation $\lambda^{\im} := \cP_J(\lambda_\alpha)$, so that $\lambda_\alpha$ can be written as
\[
\lambda_\alpha := \lambda^{\im} + \lambda_\alpha^{\ker}
\]
for some $\lambda^{\ker}_\alpha \in \ker (J+J^\top)$.

\begin{lem}\label{lem:minNormLip}
For each $t \ge 0$, and $x \in \dom \Phi(t,H \cdot)$, let $\lambda^{\im}(t,x):=\cP_J(\Phi(t,Hx))$. It holds that
\begin{equation}
\lambda^{\im}(t,x) = \argmin_{\lambda\in\Phi(t,Hx)} \ \  |\lambda|
\end{equation}
and the single-valued map $x \mapsto \lambda^{\im}(t,x)$ is Lipschitz continuous for each $t \ge 0$. That is, $\lambda^{\im}(t,x)$ is the least-norm element of the set $\Phi(t,Hx)$ and its dependence on $x$ is Lipschitz continuous. 
\end{lem}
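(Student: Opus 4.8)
The plan is to establish Lemma~\ref{lem:minNormLip} in two stages: first identify $\lambda^{\im}(t,x)$ as the least-norm element of $\Phi(t,Hx)$, and then prove Lipschitz continuity of this selection in $x$. For the first part, recall from Lemma~\ref{lem:kerJ} that any two elements of $\Phi(t,Hx)$ differ by an element of $\ker(J+J^\top)$, so the whole set $\Phi(t,Hx)$ is an affine slice of the form $\lambda^{\im} + (\ker(J+J^\top) \cap \Phi(t,Hx))$. Since $\lambda^{\im} = \cP_J(\lambda_\alpha)$ is by definition the orthogonal projection onto $\rge(J+J^\top)$, it is orthogonal to every $\lambda^{\ker}_\alpha \in \ker(J+J^\top)$. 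By the Pythagorean identity, $|\lambda_\alpha|^2 = |\lambda^{\im}|^2 + |\lambda^{\ker}_\alpha|^2 \ge |\lambda^{\im}|^2$, with equality exactly when $\lambda^{\ker}_\alpha = 0$; hence $\lambda^{\im}(t,x)$ is the minimum-norm element, establishing the $\argmin$ claim.

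For the Lipschitz estimate, I would fix $t$ and take two points $x_1, x_2 \in \dom\Phi(t,H\cdot)$, writing $\lambda_i := \lambda^{\im}(t,x_i)$. The defining relation $-\eta \in \Phi(t, Hx)$ unwinds (via \eqref{eq:eqSetInce}) to $Hx_i - J\lambda_i \in \partial\sigma_{\cS(t)}(\lambda_i)$. Exploiting monotonicity of $\partial\sigma_{\cS(t)}$ applied to the pair $(\lambda_1, \lambda_2)$ gives
\[
\langle (Hx_1 - J\lambda_1) - (Hx_2 - J\lambda_2),\, \lambda_1 - \lambda_2 \rangle \ge 0,
\]
which rearranges to $\langle J(\lambda_1 - \lambda_2), \lambda_1 - \lambda_2\rangle \le \langle H(x_1 - x_2), \lambda_1 - \lambda_2\rangle$. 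The left-hand side equals $\tfrac{1}{2}\langle (J+J^\top)(\lambda_1-\lambda_2), \lambda_1 - \lambda_2\rangle$, and here the crucial structural point enters: because each $\lambda_i$ is the least-norm element, it lies in $\rge(J+J^\top)$, so the difference $\lambda_1 - \lambda_2 \in \rge(J+J^\top)$ as well. This is precisely where the previous part pays off, since on $\rge(J+J^\top)$ the symmetric positive semidefinite form $J+J^\top$ is coercive: there is a constant $c>0$ (the smallest positive eigenvalue of $J+J^\top$) with $\langle (J+J^\top)z, z\rangle \ge c|z|^2$ for all $z \in \rge(J+J^\top)$.

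Combining the coercivity bound with Cauchy--Schwarz on the right-hand side yields
\[
\tfrac{c}{2}\,|\lambda_1 - \lambda_2|^2 \le |H|\,|x_1 - x_2|\,|\lambda_1 - \lambda_2|,
\]
and dividing through by $|\lambda_1 - \lambda_2|$ (the case $\lambda_1 = \lambda_2$ being trivial) gives the Lipschitz estimate $|\lambda^{\im}(t,x_1) - \lambda^{\im}(t,x_2)| \le \tfrac{2|H|}{c}\,|x_1 - x_2|$, with a constant independent of $t$. \textbf{The main obstacle} I anticipate is the passage $\lambda_1 - \lambda_2 \in \rge(J+J^\top)$, which is not automatic: a priori the least-norm selections are merely orthogonal to $\ker(J+J^\top)$, and one must verify cleanly that $\cP_J$ maps into $\rge(J+J^\top)$ (i.e.\ that the orthogonal complement of $\ker(J+J^\top)$ is exactly $\rge(J+J^\top)$, which holds for symmetric $J+J^\top$). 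Without confining the difference to $\rge(J+J^\top)$ the quadratic form $J+J^\top$ need not be coercive, so this confinement step is genuinely what makes the coercivity inequality and hence the whole Lipschitz bound go through. A secondary subtlety worth checking is that $\lambda^{\im}$ is well-defined and single-valued on all of $\dom\Phi(t,H\cdot)$, which follows directly from Lemma~\ref{lem:kerJ}.
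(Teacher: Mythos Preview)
There is a genuine gap: you never verify that $\lambda^{\im}(t,x)$ actually \emph{belongs} to $\Phi(t,Hx)$. Lemma~\ref{lem:kerJ} only tells you that $\Phi(t,Hx)$ is contained in the affine subspace $\lambda^{\im} + \ker(J+J^\top)$; it does not guarantee that the particular point $\lambda^{\im}$ (corresponding to zero kernel component) lies in the set. Your Pythagorean argument therefore only shows $|\lambda^{\im}| \le |\lambda_\alpha|$ for every $\lambda_\alpha \in \Phi(t,Hx)$, but the $\argmin$ over $\Phi(t,Hx)$ may be achieved at a point with nonzero kernel component if $\lambda^{\im}$ is not itself in the set. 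This membership is precisely the content of assumption~\ref{ass:minNorm}, which you never invoke; in the paper's proof it is the first thing established.

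The same gap undermines your Lipschitz argument. To write $Hx_i - J\lambda_i \in \partial\sigma_{\cS(t)}(\lambda_i)$ with $\lambda_i = \lambda^{\im}(t,x_i)$ you need $\lambda_i \in \Phi(t,Hx_i)$. If you instead work with the true least-norm elements of $\Phi(t,Hx_i)$ (which, absent~\ref{ass:minNorm}, may have nonzero kernel components), their difference is no longer confined to $\rge(J+J^\top)$, and the coercivity step you correctly flag as the crux collapses. This is not a technicality: Example~\ref{ex:nonLip} in the paper exhibits a system satisfying \ref{ass:kerJ}, \ref{ass:regF}, \ref{ass:qual}, \ref{ass:contS} but not \ref{ass:minNorm}, for which the least-norm selection is not Lipschitz. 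Once you use~\ref{ass:minNorm} to secure $\lambda^{\im}(t,x) \in \Phi(t,Hx)$, both your Pythagorean minimality argument and your coercivity-based Lipschitz bound are valid and agree with the paper's approach.
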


\begin{proof}
We first show that, because of \ref{ass:kerJ} and \ref{ass:minNorm}, $\lambda^{\im}(t,x)$ belongs to the set $\Phi(t,Hx)$. To see this, consider $\lambda \in \Phi(t,Hx)$. Due to \ref{ass:minNorm}, there exists $\overline \lambda \in \rge(J+J^\top) \cap (\partial \sigma_{\cS(t)} + J)^{-1}(Hx)$. It then follows from Lemma~\ref{lem:kerJ} that
\begin{align*}
& \lambda - \overline \lambda \in \ker (J+J^\top) \\
\Rightarrow \quad & \lambda^{\im} + \lambda^{\ker} - \overline \lambda \in \ker (J+J^\top) \\
\Rightarrow \quad & (J+J^\top) \lambda^{\im} = (J+J^\top) \overline \lambda \\
\Rightarrow \quad & \lambda^{\im} - \overline \lambda \in \ker (J+J^\top) \\
\Rightarrow \quad & \lambda^{\im} = \overline \lambda
\end{align*}
and hence $\lambda^{\im}(t,x) \in \Phi(t,Hx)$.

Next, we recall that $\Phi(t,\cdot)$ being a maximal monotone operator, the set $\Phi(t,Hx)$ is closed and convex. Hence, it contains the least-norm element \cite[Section 3.12, Theorem~1]{Luen69} and let it be denoted by $\widetilde\lambda$. Assume ad absurdum that $\widetilde\lambda\neq\lambda^{\im} (t,x)$. Then, we must have, for each $\lambda \in \Phi(t,Hx)$, that
\[
\langle \widetilde \lambda, \lambda - \widetilde \lambda \rangle \ge 0.
\]
Due to Lemma~\ref{lem:kerJ}, we can assume that $\widetilde\lambda = \lambda^{\im} + \widetilde\lambda^{\ker}$, for some nonzero $\widetilde\lambda^{\ker} \in \ker (J+J^\top)$. Plugging $\lambda=\lambda^{\im}$ in the above inequality gives
\begin{align*}
& \langle \lambda^{\im} + \widetilde \lambda^{\ker}, -\widetilde \lambda^{\ker} \rangle \ge 0 \\
\Rightarrow \quad & - \langle \widetilde \lambda^{\ker}, \widetilde \lambda^{\ker} \rangle \ge 0
\end{align*}
but this is only possible with $\widetilde \lambda^{\ker} = 0$, a contradiction. Hence, $\widetilde\lambda=\lambda^{\im}$.

Finally, to show the Lipschitz continuity of the map $x \mapsto \lambda^{\im}(t,x)$, we consider two vectors $x_a, x_b$ and let $\lambda_i^{\im} = \mathcal{P}_J( (\partial \sigma_{\cS(t)}+J)^{-1}(Hx_i))$, for $i = a,b$. Then,
\[
Hx_i - J \lambda_i^{\im} \in \partial \sigma_{\cS(t)} (\lambda_i^{\im})
\]
and due to monotonicity of $\partial \sigma_{\cS(t)}$ for each $t \ge 0$, we have
\[
\langle Hx_a - J \lambda_a^{\im} - Hx_b + J \lambda_b^{\im}, \lambda_a^{\im} - \lambda_b^{\im} \rangle \ge 0
\]
or equivalently,
\begin{equation}\label{eq:monJH}
\langle J (\lambda_a^{\im} - \lambda_b^{\im}), \lambda_a^{\im} - \lambda_b^{\im} \rangle \le \langle H(x_a - x_b), \lambda_a^{\im} - \lambda_b^{\im} \rangle.
\end{equation}
Since $J$ is positive semidefinite, there exists $c_J > 0$, such that%
\footnote{Consider the matrix $V_J$ such that $J+J^\top = V_JV_J^\top$, and let $P_J := V_J(V_J^\top V_J)^{-1}V_J^\top$ be the orthogonal projection on $\rge(J+J^\top)$. In case, $(J+J^\top) = 0$, we let $P_J=0$. Then, $\vert \cP_J(\lambda) \vert ^2 = \left\langle P_J(\lambda), P_J(\lambda) \right\rangle = \lambda^\top V_J (V_J^\top V_J)^{-1} V_J^\top \lambda \le \|V_J^\top \lambda \|^2 \, \| (V_J^\top V_J)^{-1}\| = \| (V_J^\top V_J)^{-1}\| \left \langle V_J^\top \lambda, V_J^\top \lambda \right\rangle = \| (V_J^\top V_J)^{-1}\| \left \langle \lambda, V_JV_J^\top \lambda \right\rangle = 2 \| (V_J^\top V_J)^{-1}\| \left \langle J \lambda, \lambda \right\rangle$. To obtain \eqref{eq:posJnorm}, let $\lambda = \lambda_a^{\im} - \lambda_b^{\im}$, so that $\cP_J(\lambda) = \lambda$.
}
\begin{align}
|\lambda_a^{\im} - \lambda_b^{\im}|^2
& \le c_J \langle J (\lambda_a^{\im} - \lambda_b^{\im}), \lambda_a^{\im} - \lambda_b^{\im} \rangle \label{eq:posJnorm} \\
& \le c_J\langle H(x_a - x_b), \lambda_a^{\im} - \lambda_b^{\im} \rangle \notag\\
& \le c \, \vert x_a - x_b \vert \, \vert \lambda_a^{\im} - \lambda_b^{\im} \vert \notag
\end{align}
for some $c \ge 0$, which proves the Lipschitz continuity of the desired map.
\end{proof}

Since we have assumed that $\ker(J+J^\top) \subseteq \ker (PG-H^{\top})$, we must have $G \lambda_\alpha^{ker} = P^{-1}H^{\top} \lambda_{\alpha}^{\ker}$.
This allows us to rewrite \eqref{eq:sysDI} as follows:
\begin{equation}\label{eq:sysDecomp}
\begin{aligned}
\dot x(t) &= \! f(t,x) \!- G \lambda^{\im}(t) - P^{-1}H^\top \lambda_\alpha^{\ker}(t)\\
& = \! f(t,x) \! +\!(P^{-1}H^{\top}\!\!- G) \cP_J(\lambda_\alpha(t)) - P^{-1}H^\top \lambda_\alpha(t)\\
& = \! f(t,x) \! - P^{-1}(PG-H^\top) \cP_J(\lambda_\alpha(t)) - P^{-1}H^\top \lambda_\alpha(t) \\
\lambda_\alpha(t) & \in \Phi(t,Hx(t)).
\end{aligned}
\end{equation}
Let $R$ denote the square root of the matrix $P$ in \ref{ass:kerJ}, so that $R$ is also positive definite and symmetric.
Introduce the coordinate transformation $z = Rx$, then in the new coordinates, \eqref{eq:sysDecomp} is written as:
\begin{equation}\label{eq:newCoord}
\begin{aligned}
\dot z(t) &= R f(t,R^{-1}z) + (R^{-1}H^{\top}\!\! - RG) \cP_J(\lambda_\alpha(t)) - R^{-1}H^{\top} \lambda_{\alpha}(t) \\
\lambda_\alpha(t) &\in \Phi(t,HR^{-1}z(t)).
\end{aligned}
\end{equation}

\begin{lem}\label{lem:linMaxMon}
For each $t \ge 0$, the multivalued operator $R^{-1}H^\top \Phi(t,H R^{-1}\cdot)$ is maximal monotone.
\end{lem}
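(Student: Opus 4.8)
The plan is to reduce the operator to a symmetric ``congruence'' of $\Phi(t,\cdot)$, dispose of monotonicity in one line, and then concentrate all the effort on maximality, where the constraint qualification \ref{ass:qual} will be the decisive ingredient.

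First I would simplify the notation. Since $R$ is symmetric, $R^{-1}H^\top = (HR^{-1})^\top$, so writing $M := HR^{-1}$ the operator in the statement is exactly $z \mapsto M^\top \Phi(t,Mz)$. Monotonicity is then immediate: for $w_i \in \Phi(t,Mz_i)$, $i=1,2$, one has $\langle M^\top w_1 - M^\top w_2, z_1 - z_2 \rangle = \langle w_1 - w_2, Mz_1 - Mz_2 \rangle \ge 0$ by monotonicity of $\Phi(t,\cdot)$, which was established in Lemma~\ref{lem:maxPhi}. Hence the real content of the lemma is the maximality of the composition $M^\top \Phi(t,M\cdot)$ for each fixed $t \ge 0$.

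For maximality I would invoke the fact that the composition $L^\top A L$ of a maximal monotone operator $A$ with a linear map $L$ is again maximal monotone, provided the constraint qualification $\rge L \cap \rint(\dom A) \neq \emptyset$ holds. Here $A = \Phi(t,\cdot)$ (maximal monotone by Lemma~\ref{lem:maxPhi}) and $L = M$. The key observation is that this qualification is precisely \ref{ass:qual}: since $R^{-1}$ is invertible we have $\rge M = \rge H$, and since $\dom \Phi(t,\cdot) = \rge(\partial\sigma_{\cS(t)} + J)$, the requirement $\rge M \cap \rint(\dom \Phi(t,\cdot)) \neq \emptyset$ reads exactly as $\rge H \cap \rint(\rge(\partial\sigma_{\cS(t)} + J)) \neq \emptyset$. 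Thus \ref{ass:qual} is tailored to make this step work pointwise in $t$.

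To make the argument self-contained rather than merely citing the composition theorem, I would verify maximality through Minty's criterion, namely that a monotone operator $S$ on $\R^n$ is maximal monotone iff $\rge(S+I) = \R^n$. Taking $S = M^\top \Phi(t,M\cdot)$ and chasing the inclusion $p \in z + M^\top \Phi(t,Mz)$ by setting $z = p - M^\top w$ with $w \in \Phi(t,Mz)$, one finds $Mp - MM^\top w \in (\partial\sigma_{\cS(t)}+J)(w)$, so solvability reduces to showing that $Mp \in \rge(\partial\sigma_{\cS(t)} + MM^\top + J)$ for every $p$, i.e.\ $\rge M \subseteq \rge(\partial\sigma_{\cS(t)} + MM^\top + J)$. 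The sum $\partial\sigma_{\cS(t)} + (MM^\top + J)$ is maximal monotone by \cite[Corollary~12.44]{RockWets98}, the linear part $MM^\top + J$ being everywhere-defined and monotone since its symmetric part $MM^\top + \tfrac12(J+J^\top)$ is positive semidefinite (using that $J$ is positive semidefinite in \ref{ass:kerJ}). The main obstacle is exactly this surjectivity-onto-$\rge M$ step: the added quadratic term $MM^\top$ provides coercivity precisely in the directions of $\rge M$, while the relative-interior condition \ref{ass:qual} is what guarantees the range of the sum actually covers $\rge M$. As a consistency check, when $J$ is symmetric (in particular $J=0$) the operator is a genuine subdifferential composition and the claim is just the subdifferential chain rule under \ref{ass:qual}; the non-symmetric case is where the monotone-operator version, rather than the convex-function one, is genuinely needed.
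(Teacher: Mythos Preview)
Your main route is exactly the paper's: invoke the composition theorem for maximal monotone operators (the paper cites \cite[Theorem~12.43]{RockWets98}), using Lemma~\ref{lem:maxPhi} for the maximal monotonicity of $\Phi(t,\cdot)$ and \ref{ass:qual} for the constraint qualification $\rge(HR^{-1})\cap\rint(\dom\Phi(t,\cdot))\neq\emptyset$; your identification $\rge M=\rge H$ and $\dom\Phi(t,\cdot)=\rge(\partial\sigma_{\cS(t)}+J)$ matches the paper verbatim. The additional Minty-based ``self-contained'' sketch you append is not in the paper and, as you yourself flag, leaves the surjectivity step $\rge M\subseteq\rge(\partial\sigma_{\cS(t)}+MM^\top+J)$ only heuristically justified; filling that gap is essentially reproving \cite[Theorem~12.43]{RockWets98}, so for the paper's purposes the direct citation is the cleaner choice.
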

\begin{proof}
We basically use \cite[Theorem~12.43]{RockWets98} and \ref{ass:qual} to arrive at the result. We just need to show that $\Phi(t,\cdot)$ is maximal monotone for each $t \ge 0$, and that $\rge (HR^{-1}) \cap \rint(\dom (\Phi(t,\cdot))) \neq \emptyset$. The first condition holds due to Lemma~\ref{lem:maxPhi}. The latter holds due to the constraint qualification \ref{ass:qual} because $\dom (\Phi(t,\cdot)) = \rge (\partial \sigma_{\cS(t)} + J)$.
\end{proof}
As a result of Lemmas~\ref{lem:minNormLip}--\ref{lem:linMaxMon}, one can now write \eqref{eq:newCoord} as:
\begin{equation}\label{eq:sysLip}
\dot z(t) \in g(t,z) - \Psi (t,z)
\end{equation}
where
\begin{equation}\label{eq:defg}
g(t,z) := R f(t,R^{-1}z) - (R^{-1}H^{\top} - RG) \cP_J(\lambda_\alpha(t))
\end{equation}
is globally Lipschitz (in the second argument) and 
\begin{equation}\label{eq:defPsi}
\Psi (t,z) := R^{-1}H^{\top} \Phi(t,HR^{-1}z(t))
\end{equation}
is maximal monotone for each $t\ge 0$.

\begin{lem}\label{lem:solIncmu0}
Let $\tilde g:[0,T] \rightarrow \R^n$ be a locally absolutely continuous function. There exists a unique locally absolutely continuous function $\tilde z :[0,T] \rightarrow \R^n$ which is a strong solution to the differential inclusion
\begin{equation}\label{eq:sysFinalmu0}
\dot {\tilde z} (t) \in -\Psi(t,\tilde z) + \tilde g(t), \quad \tilde z(0) \in \dom \Psi(0,\cdot)
\end{equation}
in the sense of Definition~\ref{def:sol}.
\end{lem}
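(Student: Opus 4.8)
The plan is to read \eqref{eq:sysFinalmu0} as a differential inclusion driven by a time-varying maximal monotone operator with an absolutely continuous perturbation, and to invoke the existence theory for such inclusions, namely \cite[Theorem~3]{KunzMont97}; uniqueness I would then obtain separately from monotonicity. First I would absorb the forcing term: since $\tilde g$ is locally absolutely continuous on the compact interval $[0,T]$, it is bounded there, so the shifted operator $\hat\Psi(t,z):=\Psi(t,z)-\tilde g(t)$ is, for each fixed $t$, still maximal monotone (translating all images by a constant preserves maximal monotonicity), has the same domain as $\Psi(t,\cdot)$, and its minimal selection differs only by the bounded term $\tilde g(t)$. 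Rewriting \eqref{eq:sysFinalmu0} as $\dot{\tilde z}(t)\in-\hat\Psi(t,\tilde z)$ thus reduces matters to the unperturbed form while leaving the structural hypotheses on the domain and on the minimal selection intact.

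It then remains to verify the hypotheses of \cite[Theorem~3]{KunzMont97} for $\Psi(t,\cdot)$. Maximal monotonicity for each $t\ge0$ is exactly Lemma~\ref{lem:linMaxMon}. The required bound on the minimal-norm selection is furnished by \ref{ass:minNorm} via Lemma~\ref{lem:minNormLip}: the least-norm element $\lambda^{\im}(t,x)$ of $\Phi(t,Hx)$ is single-valued and Lipschitz in the state, which after the change of variables $z=Rx$ controls the minimal selection of $\Psi(t,\cdot)=R^{-1}H^\top\Phi(t,HR^{-1}\cdot)$. For the domain I would use that $\dom\Psi(t,\cdot)=\{z:HR^{-1}z\in\rge(\partial\sigma_{\cS(t)}+J)\}$, together with the identity $\rge\partial\sigma_{\cS(t)}=\cS(t)$ and the constraint qualification \ref{ass:qual}, to relate the Hausdorff variation of $\dom\Psi(t,\cdot)$ to that of $\cS(t)\cap\rge H$, which is absolutely continuous with modulus $\mu$ by \ref{ass:contS}. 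With $\tilde z(0)\in\dom\Psi(0,\cdot)$ supplying an admissible initial datum, the theorem then yields a locally absolutely continuous strong solution.

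Uniqueness follows directly from monotonicity, independently of the existence result. For two strong solutions $z_1,z_2$, the map $t\mapsto|z_1(t)-z_2(t)|^2$ is absolutely continuous, and selecting $\xi_i(t)\in\Psi(t,z_i(t))$ with $\dot z_i=-\xi_i+\tilde g$ almost everywhere, the forcing terms cancel and
\[
\tfrac12\tfrac{d}{dt}|z_1(t)-z_2(t)|^2=\langle\dot z_1-\dot z_2,\,z_1-z_2\rangle=-\langle\xi_1-\xi_2,\,z_1-z_2\rangle\le0
\]
for almost every $t$, by monotonicity of $\Psi(t,\cdot)$. Hence $|z_1(t)-z_2(t)|^2$ is nonincreasing; since it vanishes at $t=0$ and is nonnegative, $z_1\equiv z_2$ on $[0,T]$.

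The main obstacle I anticipate is the domain-variation hypothesis: converting the absolutely continuous variation of $\cS(t)\cap\rge H$ assumed in \ref{ass:contS} into an absolutely continuous variation of $\dom\Psi(t,\cdot)=\{z:HR^{-1}z\in\rge(\partial\sigma_{\cS(t)}+J)\}$. This forces one to combine the range condition $\rge J\subseteq\rge H$ with \ref{ass:qual} so as to guarantee $\rge H\cap\cS(t)\neq\emptyset$, and then to check that taking preimages under the linear map $HR^{-1}$ does not inflate the Hausdorff distance beyond the bound set by $\mu$. This is precisely the step where \ref{ass:qual}, \ref{ass:minNorm}, and \ref{ass:contS} must be used in concert, and where the specific intersection with $\rge H$ appearing in \ref{ass:contS} becomes indispensable.
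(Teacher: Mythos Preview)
Your overall plan---absorb $\tilde g$, verify the hypotheses of \cite[Theorem~3]{KunzMont97}, then argue uniqueness from monotonicity---matches the paper's route, and your uniqueness argument is fine. But there is a genuine gap in how you interpret the second hypothesis of that theorem.

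You describe the condition to be verified as a ``domain-variation hypothesis'' and set out to control the Hausdorff variation of $\dom\Psi(t,\cdot)$. That is not what \cite[Theorem~3]{KunzMont97} asks for. The relevant hypothesis is the bound
\[
d_{\svm}(\Psi(t_1,\cdot),\Psi(t_2,\cdot))\;:=\;\sup\left\{\frac{\langle\eta_1-\eta_2,z_2-z_1\rangle}{1+|\eta_1|+|\eta_2|}:\ \eta_i\in\Psi(t_i,z_i)\right\}\;\le\;|\tilde\mu(t_1)-\tilde\mu(t_2)|,
\]
a pseudo-distance between the \emph{operators}, not between their domains. For a normal cone to a moving set this does reduce to a Hausdorff condition on the set, but $\Psi(t,\cdot)=R^{-1}H^\top(\partial\sigma_{\cS(t)}+J)^{-1}(HR^{-1}\cdot)$ is not a normal cone operator, and its domain can be unbounded (a linear preimage), so your proposed reduction does not go through. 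The paper establishes the $d_{\svm}$ bound directly: it unpacks $\eta_i\in\Psi(t_i,z_i)$ as $\lambda_i\in\cN_{\cS(t_i)}(v_i)$ with $v_i=HR^{-1}z_i-J\lambda_i\in\cS(t_i)\cap\rge H$ (here the inclusion $\rge J\subseteq\rge H$ in \ref{ass:contS} is used), projects $\lambda_i$ onto $\rge H$, and exploits the normal cone inequality together with the Hausdorff bound on $\cS(t)\cap\rge H$ to get $\langle\lambda_1-\lambda_2,v_2-v_1\rangle\le c_H(|H^\top\lambda_1|+|H^\top\lambda_2|)\,|\mu(t_1)-\mu(t_2)|$. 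That computation is the heart of the lemma and is absent from your proposal.

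A secondary issue: the linear-growth bound on the minimal selection (hypothesis {\em\ref{hyp:minNorm}}) does not follow from Lemma~\ref{lem:minNormLip} alone. Lipschitz dependence of $\lambda^{\im}(t,\cdot)$ on $x$ gives no information about how $\lambda^{\im}(t,x_0)$ varies with $t$; you still need an integrable $t$-dependent bound. The paper gets it by writing $|\lambda^{\im}(t,x)|\le c_J|Hx-v_t|$ for any $v_t\in\cS(t)$ and then choosing $v_t$ as the projection of a fixed $w_0\in\cS(0)\cap\rge H$ onto $\cS(t)\cap\rge H$, so that \ref{ass:contS} supplies the control in $t$.
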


\begin{proof}%{of Lemma~\ref{lem:solIncmu0}}
The proof is based on showing that the hypotheses of Theorem~\ref{thm:solMaxMonTV} given in Appendix~\ref{app:litLemmas} hold. We first show that $\Psi(\cdot,\cdot)$ satisfies the conditions {\em \ref{hyp:minNorm}} and {\em \ref{hyp:contDom}} listed in Theorem~\ref{thm:solMaxMonTV}, which is a result borrowed from \cite[Theorem~3]{KunzMont97}, and consequently \eqref{eq:sysFinalmu0} could be studied as a particular case.

For verifying {\em \ref{hyp:minNorm}}, it is observed that the least-norm element $\zeta^0(t,z)$ of $\Psi(t,\tilde z)$, for each $t\ge0$, $\tilde z \in \dom \Psi(t,\cdot)$, satisfies the bound
\begin{equation}\label{eq:minNormPsiBound}
\vert \zeta^0(t,\tilde z) \vert \le \| R^{-1}H^{\top} \| \, \vert \lambda^{\im}(t,R^{-1} \tilde z) \vert
\end{equation}
where $\lambda^{\im}(t,x) \in \Phi(t,Hx)$, so that $\lambda^{\im} (t,x) \in \cN_{\cS(t)}(Hx - J\lambda^{\im}(t,x))$. By using the definition of the normal cone, and the fact that $\lambda^{\im}(t,x) \in \rge (J+J^\top)$, we get
\begin{align*}
\vert \lambda^{\im}(t,x) \vert^2 & \le c_J \langle \lambda^{\im}(t,x), J \lambda^{\im}(t,x) \rangle \\
&\le c_J \langle \lambda^{\im}(t,x), Hx - v_t \rangle, \quad \forall \,v_t \in \cS(t),
\end{align*}
where $c_J$ is the constant introduced in \eqref{eq:posJnorm}. Using Cauchy-Schwarz inequality, we now get
\begin{equation}\label{eq:bndMinLambda}
\vert \lambda^{\im}(t,x) \vert \le c_J \, \vert Hx - v_t \vert, \quad \forall \,v_t \in \cS(t).
\end{equation}
Next, fix a point $w_0 \in \cS(0) \cap \rge H$, and choose $v_t^0 \in \cS(t) \cap \rge H$ such that
\[
\vert w_0 - v_t^0 \vert = d (w_0, \cS(t) \cap \rge H)
\]
which always exists because $\cS(t) \cap \rge H$, for each $t \ge 0$, is closed and convex.
Returning to \eqref{eq:bndMinLambda}, and letting $v_t = v_t^0$, we get
\[
\begin{aligned}
\vert \lambda^{\im}(t,x) \vert & \le c_J \|H\| \ \vert x \vert + c_J \vert v_t^0 - w_0 \vert + \vert w_0 \vert.
\end{aligned}
\]
Since $d(w_0, \cS(t)) \le d_{\Haus}(\cS(0)  \cap \rge H,\cS(t) \cap \rge H) \le \vert \mu(t) - \mu (0) \vert$, where the last inequality is due to \ref{ass:contS}, we obtain
\[
\vert \lambda^{\im}(t,x) \vert \le c_1 \, \vert x \vert + c_2 \mu(t) + c_3(\vert w_0\vert + \vert \mu_0 \vert)
\]
for some constants $c_1, c_2, c_3 \ge 0$.
By substituting this inequality in \eqref{eq:minNormPsiBound} with $x = R^{-1} \tilde z$, hypothesis \ref{hyp:minNorm} is seen to hold.

For the hypothesis {\em \ref{hyp:contDom}} (recalling again Theorem~\ref{thm:solMaxMonTV} given in Appendix~\ref{app:litLemmas}), we show that there is a locally absolutely continuous function $\tilde \mu$ such that, for some fixed $t_1, t_2 \ge 0$, the map $\Psi$ satisfies
\begin{equation}\label{eq:bndSetVar}
d_{\svm} (\Psi(t_1, \cdot), \Psi(t_2,\cdot)) \le \vert \tilde \mu(t_1) - \tilde \mu(t_2) \vert
\end{equation}
where $d_{\svm}(\Psi_1,\Psi_2)$ denotes a certain distance between set-valued mappings $\Psi_1$ and $\Psi_2$, see \eqref{eq:defDistSvm} below.
In the sequel, we show that $d_{\svm} (\Psi(t_1, \cdot), \Psi(t_2,\cdot)) \le d_{\Haus} (\cS(t_1) \cap \rge H, \cS(t_2) \cap \rge H)$ and by Assumption~\ref{ass:contS}, the bound \eqref{eq:bndSetVar} holds.
Towards this end, it is observed that
\begin{align}
& d_{\svm} (\Psi(t_1, \cdot), \Psi(t_2,\cdot)) \notag\\
& = \sup \left\{\frac{\langle H^\top (\lambda_1 - \lambda_2), z_2 - z_1 \rangle}{1 + |H^\top \lambda_1| + |H^\top \lambda_2|}, \lambda_i \in (\partial\sigma_{\cS(t_i)} + J)^{-1} (Hz_i), i=1,2 \right\} \label{eq:defDistSvm}\\
& = \sup_{\lambda_i \in \cN_{\cS(t_i)}(H z_i - J\lambda_i)}\left\{\frac{\langle \lambda_1 - \lambda_2,  H z_2 - J \lambda_2 - H z_1 + J \lambda_1\rangle - \langle \lambda_1 - \lambda_2, J(\lambda_1-\lambda_2)\rangle}{1 + |H^\top \lambda_1| + |H^\top \lambda_2|}\right\} \notag\\
& \le \sup_{\lambda_i \in \cN_{\cS(t_i)}(H z_i-J\lambda_i)}\left\{\frac{\langle \lambda_1 - \lambda_2, H z_2 - J \lambda_2 - H z_1 + J \lambda_1 \rangle}{1 + |H^\top \lambda_1| + | H^\top \lambda_2|}\right\} \notag
\end{align}
where the last inequality is due to the fact that $J$ is positive semidefinite. To compute an upper bound on the expression in the numerator,
let $v_i:=H z_i - J\lambda_i \in \cS(t_i) \cap \rge H$, which is always possible since $\rge J \subset \rge H$.
Let $w_i \in \cS(t_i) \cap \rge H$ be such that
\[
\vert w_1 - v_2 \vert = d (v_2, \cS(t_1) \cap \rge H), \quad \text{and} \quad \vert w_2 - v_1 \vert = d (v_1, \cS(t_2) \cap \rge H).
\]
Using the fact that $\langle \lambda_1, w_1 - v_1 \rangle \le 0$ because $\lambda_1 \in \cN_{\cS(t_1)}(v_1)$, we obtain
\[
\langle \lambda_1, v_2 - v_1 \rangle  = \langle \lambda_1, v_2 - w_1 + w_1 - v_1 \rangle
\le \langle \lambda_1, v_2 - w_1 \rangle.
\]
Let $\lambda_1^H := H \widehat \lambda_1$ denote the orthogonal projection of $\lambda_1$ on $\rge H$, so that $\widehat \lambda_1$ is any vector that satisfies $(H^\top H) \widehat \lambda_1 = H^\top \lambda_1$, and there exists a constant $c_H$ such that $\vert \lambda_1^H \vert \le c_H \vert H^\top \lambda_1 \vert$. Since $\lambda_1 - \lambda_1^H \in \rge(H)^\bot = \ker (H^\top)$, and $v_2, w_1 \in \rge H$, we get
\[
\begin{aligned}
\langle \lambda_1, v_2 - v_1 \rangle & \le \langle \lambda_1, v_2 - w_1 \rangle = \langle \lambda_1^H, v_2 - w_1 \rangle \\
& \le \vert \lambda_1^H \vert \cdot \vert v_2 - w_1 \vert \le c_H \vert H^\top \lambda_1 \vert d (v_2, \cS(t_1) \cap \rge H)\\
& \le c_H \vert H^\top \lambda_1 \vert d_{\Haus}(\cS(t_1) \cap \rge H ,\cS(t_2) \cap \rge H)
 \le c_H \vert H^\top \lambda_1 \vert \cdot \vert \mu (t_2) - \mu(t_1) \vert .
\end{aligned}
\]
Similarly, we can obtain
\[
\langle \lambda_2, v_1- v_2 \rangle \le c_H \vert H^\top \lambda_2 \vert \cdot \vert \mu (t_2) - \mu(t_1) \vert ,
\]
and hence
\[
\langle \lambda_1 - \lambda_2, v_2- v_1 \rangle \le c_H (\vert H^\top \lambda_1 \vert + \vert H^\top \lambda_2 \vert) \vert \mu (t_2) - \mu(t_1) \vert.
\]
It thus follows that
\begin{align*}
d_{\svm} (\Psi(t_1, \cdot), \Psi(t_2,\cdot)) & \le \sup_{\lambda_i \in \cN_{\cS(t_i)}(H z_i-J\lambda_i)}\left\{ \frac{c_H \vert \mu (t_2) - \mu(t_1) \vert (\vert H^\top \lambda_1\vert + \vert H^\top \lambda_2\vert)}{1 + |H^\top \lambda_1| + |H^\top \lambda_2|}\right\} \\
& \le c_H \vert \mu (t_2) - \mu(t_1) \vert.
\end{align*}
The result of Lemma~\ref{lem:solIncmu0} now follows by applying Theorem~\ref{thm:solMaxMonTV} from Appendix~\ref{app:litLemmas}.
\end{proof}

Next, we use the result of Lemma~\ref{lem:solIncmu0} to obtain a bound on $|z_1(t) - z_2(t)|$, for each $t \ge 0$, where $z_i$, $i = 1,2$, satisfies
\begin{equation}
\dot z_i (t) \in -\Psi(t,z_i) + \tilde g_i(t)
\end{equation}
for some locally absolutely continuous functions $\tilde g_i:[0,\infty) \rightarrow \R^n$.
To compute this bound, it is seen that the monotonicity of $\Psi(t,\cdot)$ yields
\begin{align*}
\frac{1}{2} \frac{d}{dt}|z_1(t)- z_2(t)|^2 &= \langle \dot z_1(t) - \dot z_2(t), z_1(t) - z_2(t) \rangle \\
 & \le \langle \tilde g_1(t) - \tilde g_2(t), z_1(t) - z_2(t)\rangle \\
 & \le |\tilde g_1(t) - \tilde g_2(t)| \cdot |z_1(t) - z_2(t)|,
\end{align*}
where the first inequality is due to the monotonicity of $\Psi(t,\cdot)$, and the second inequality is due to Cauchy-Schwarz.
Letting $Z(t):=|z_1(t)- z_2(t)|^2$, the above inequality is rewritten as:
$\dot Z(t) \le 2|\tilde g_1(t) - \tilde g_2(t)|\sqrt{Z(t)}$. Applying the comparison lemma for solution of ODEs \cite[Lemma~3.4]{Khalil02}, we get
\begin{equation}\label{eq:estGron}
|z_1(t) - z_2(t)| \le |z_1(0)- z_2(0)|+ \int_0^t |\tilde g_1(s) - \tilde g_2(s)| \, ds.
\end{equation}

Coming back to the question of existence of solutions to \eqref{eq:sysLip}, we make use of the estimate obtained in \eqref{eq:estGron}.
Towards this end, consider a sequence of solutions with $z_1(t) = z(0)$, $t \ge 0$, and $z_{i+1}(\cdot)$, for $i \ge 1$ obtained as a solution to the inclusion
\[
\dot z_{i+1}(t) \in - \Psi(t,z_{i+1}) + g(t, z_i (t))
\] 
with initial condition $z_i(0) = z(0)$, $i \in \N$.
Under the assumption \ref{ass:regF}, and recalling Lemma~\ref{lem:minNormLip} and \eqref{eq:defg}, it follows that for each compact interval $[0,T]$, there exists a constant $\overline \rho_T$ such that
\[
\vert g(t, z_i (t)) - g(t, z_{i-1}(t))\vert \le \overline \rho_T \vert z_i(s) - z_{i-1}(s) \vert.
\]
Then, from \eqref{eq:estGron}, we have, for each $t \in [0,T]$:
\[
|z_{i+1}(t) - z_{i}(t)| \le \int_0^t \overline \rho_T \vert z_i(s) - z_{i-1}(s) \vert \, ds
\]
which through induction leads to:
\[
|z_{i+1}(t) - z_{i}(t)| \le \frac{(\overline \rho_T \,  t)^i}{i!} \|z_{2} - z_1\|_{\cL_{\infty}([0,T],\R^n)}.
\]
Thus, the sequence $\{z_i\}_{i=1}^\infty$ converges uniformly on every compact interval $[0,T]$, and hence
\[
z(t) := \lim_{i\rightarrow \infty} z_i(t)
\]
is a weak solution to system~\eqref{eq:sysLip}.

Uniqueness can be shown to hold easily for the inclusion \eqref{eq:sysLip} due to monotonicity of the operator $\Psi(t,\cdot)$.
Since we have only introduced bijective operations in arriving from system~\eqref{eq:sysDVI} to \eqref{eq:sysLip}, the conclusion also holds for system~\eqref{eq:sysDVI}.
This concludes the proof of Theorem~\ref{thm:solMain}.

\subsection{Comments and Examples}\label{sec:commPf}

We provide some additional comments on the assumptions \ref{ass:qual} and \ref{ass:minNorm} used in the statement of Theorem~\ref{thm:solMain}.

\subsubsection{Alternative formulation of \ref{ass:qual}}
The assumption \ref{ass:qual} was primarily used to satisfy the condition that renders the map $x \mapsto H^\top \Phi(t,Hx)$ maximal monotone. It is natural to ask whether we can say something more about the set $\dom \Phi(t,\cdot) = \rge (\partial\sigma_{\cS(t)}+J)$. For a general characterization of this set, see \cite[Section~4]{BrogGoel13}.
Here, we want to point out a special case when the matrix $J$ is positive definite.

\begin{prop}\label{lem:domPhi}
If the matrix $J$ is positive definite, then for each $t \ge 0$, we have
\begin{gather}
\rint(\dom \Phi(t,\cdot)) = \rint(\rge(\partial \sigma_{\cS(t)}+J))= \rint(\cS(t)) + \rge J.
\end{gather}
\end{prop}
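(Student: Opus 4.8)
The plan is to reduce the proposition to the surjectivity of a strongly monotone operator. The first equality is purely definitional: by \eqref{eq:defPhi} the operator $\Phi(t,\cdot)$ equals $(\partial\sigma_{\cS(t)}+J)^{-1}$, so its domain is precisely $\rge(\partial\sigma_{\cS(t)}+J)$, and taking relative interiors gives $\rint(\dom\Phi(t,\cdot)) = \rint(\rge(\partial\sigma_{\cS(t)}+J))$. For the remaining identity I would first note that when $J$ is positive definite one has $\rge J = \R^{d_s}$; since $\cS(t)$ is nonempty and convex, $\rint(\cS(t)) \neq \emptyset$, and adding a nonempty set to the whole space yields $\rint(\cS(t)) + \rge J = \R^{d_s}$. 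Hence everything comes down to proving that $\partial\sigma_{\cS(t)}+J$ is surjective, i.e. $\rge(\partial\sigma_{\cS(t)}+J) = \R^{d_s}$, whence $\rint(\rge(\partial\sigma_{\cS(t)}+J)) = \R^{d_s}$ as well.

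The core step is this surjectivity. I would recall from the proof of Lemma~\ref{lem:maxPhi} that $\partial\sigma_{\cS(t)}$ is maximal monotone and that the sum $\partial\sigma_{\cS(t)}+J$ is maximal monotone; its domain is nonempty because $\rge(\partial\sigma_{\cS(t)}) = \cS(t) \neq \emptyset$ forces $\dom(\partial\sigma_{\cS(t)}) \neq \emptyset$, and $\dom J = \R^{d_s}$. Since $J$ is positive definite there is $c>0$ with $\langle J\xi,\xi\rangle \ge c|\xi|^2$ for all $\xi$, and combining this with monotonicity of $\partial\sigma_{\cS(t)}$ shows that $\partial\sigma_{\cS(t)}+J$ is strongly monotone: for $y_i \in (\partial\sigma_{\cS(t)}+J)(\eta_i)$ one has $\langle y_1 - y_2,\, \eta_1 - \eta_2\rangle \ge c\,|\eta_1-\eta_2|^2$. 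Anchoring at a fixed $\eta_0$ in the domain, this yields the coercivity estimate $\langle y - y_0,\, \eta-\eta_0\rangle / |\eta| \to +\infty$ as $|\eta| \to \infty$. A coercive maximal monotone operator on $\R^{d_s}$ is surjective (the surjectivity theorem for maximal monotone operators, see \cite{Brez73} or \cite{RockWets98}), so $\rge(\partial\sigma_{\cS(t)}+J) = \R^{d_s}$.

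Putting the two steps together gives $\rint(\dom\Phi(t,\cdot)) = \rint(\rge(\partial\sigma_{\cS(t)}+J)) = \R^{d_s} = \rint(\cS(t)) + \rge J$, which is the assertion. The only delicate point is the surjectivity invocation: I would be careful to cite a version of the Minty/surjectivity theorem valid for genuinely set-valued maximal monotone operators (the term $\partial\sigma_{\cS(t)}$ is multivalued), and to verify nonemptiness of the domain so that coercivity can be anchored at an actual point $\eta_0$.

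As a sanity check and an alternative to the coercivity argument, I would note the resolvent/Minty viewpoint: for every $z$ the inclusion $\eta \in \cN_{\cS(t)}(z - J\eta)$ is solvable, since it characterizes the (unique) $J$-weighted projection of $z$ onto $\cS(t)$, and $z - J\eta \in \cS(t)$ with $\eta \in \cN_{\cS(t)}(z-J\eta)$ is exactly the statement $z \in \rge(\partial\sigma_{\cS(t)}+J)$ after applying Proposition~\ref{prop:invConj}. For a merely positive semidefinite $J$ the same formula would instead require a Brezis--Haraux type range argument exploiting that $\partial\sigma_{\cS(t)}$ is a subdifferential and $J$ is linear monotone, but that refinement is not needed for the positive definite case treated here.
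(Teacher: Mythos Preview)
Your argument is correct but proceeds by a genuinely different route than the paper. The paper establishes the identity by a chain of set equalities that relies on the Brezis--Haraux range theorem (Theorem~\ref{thm:sumMon}): it writes $\rint(\rge(\partial\sigma_{\cS(t)}+J)) = \rint(\cl(\rge(\partial\sigma_{\cS(t)}+J)))$, invokes $\cl(\rge(\cM_1+\cM_2)) = \cl(\rge\cM_1 + \rge\cM_2)$ for $3^*$-monotone operators, and then manipulates relative interiors via \cite[Theorem~6.3, Corollary~6.6.2]{Rock70} to arrive at $\rint(\cS(t)) + \rge J$. Your proof instead exploits positive definiteness of $J$ directly: since $\rge J = \R^{d_s}$, the right-hand side is all of $\R^{d_s}$, and it suffices to show $\partial\sigma_{\cS(t)}+J$ is surjective, which you obtain from strong monotonicity (hence coercivity) together with maximal monotonicity and Minty's surjectivity theorem. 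Your approach is more elementary and self-contained for the positive definite case; it sidesteps the need to check $3^*$-monotonicity of $J$ and to cite the Brezis--Haraux machinery. The paper's chain, on the other hand, is written in a form that makes the identity $\rint(\cS(t)) + \rge J$ appear structurally (as a sum of ranges) rather than merely as $\R^{d_s}$, which is suggestive for extensions to the semidefinite case---and indeed your closing remark about needing a Brezis--Haraux type argument when $J$ is only positive semidefinite is exactly what the paper's proof strategy is already set up to handle.
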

\begin{proof}%{of Lemma~\ref{lem:domPhi}}
By definition, for each $t \ge 0$, $\dom \Phi(t,\cdot) = \rge (\partial \sigma_{\cS(t)}+ J)$, and it is observed that
\begin{align*}
\rint(\rge(\partial \sigma_{\cS(t)}+J)) &= \rint(\cl(\rge(\partial \sigma_{\cS(t)}+J))) \\
& = \rint (\cl (\rge \partial \sigma_{\cS(t)} )+ \cl(\rge J)) \\
& = \rint (\rge \partial \sigma_{\cS(t)} + \rge J) \\
& = \rint (\rge \partial \sigma_{\cS(t)}) + \rint(\rge J)\\
& = \rint (\cS(t)) + \rge J,
\end{align*}
where the first and third equality follow from \cite[Theorem~6.3]{Rock70}, and the fourth equality is due to \cite[Corollary~6.6.2]{Rock70} because the sets under consideration are convex. The second equality is due to Theorem~\ref{thm:sumMon} in Appendix~\ref{app:litLemmas} and holds because $\partial \sigma_{\cS(t)}$ and $J$ define monotone operators, and satisfy the two conditions required to invoke Theorem~\ref{thm:sumMon}.
\end{proof}

\subsubsection{Necessity of \ref{ass:minNorm}}
Lemma~\ref{lem:minNormLip} is a central ingredient of the proof of Theorem~\ref{thm:solMain}. It essentially relies on the assumption \ref{ass:minNorm}. The Lipschitz continuity of the least norm element of the set $\Phi(t,z)$ with respect to $z \in \dom \Phi(t,\cdot)$ does not always hold, see also Example~\ref{ex:nonLip} below. We need this regularity condition to invoke \cite[Theorem~3]{KunzMont97} in Lemma~\ref{lem:solIncmu0} for existence of solutions to differential inclusions with time-varying maximal monotone operators. Another result on the solutions of such systems, which does not require restrictions on least norm element, appears in \cite{Vlad91}. But \cite[Theorem~7.1]{Vlad91} is only valid for {\em regular} set-valued operators, which requires $\dom(\Phi(t,\cdot))$ to have nonempty {\em interior} for each $t \ge 0$ and for our purposes it is much more restrictive than \cite[Theorem~3]{KunzMont97}.
\begin{exam}\label{ex:nonLip}
Consider a system with $f(t,x) \equiv 0$, for each $t \ge 0$,
\[
\cS(t) = \cS = \{(x_1,x_2)\in \R^2 \, \vert \, x_2 \ge x_1^2\}, \quad \forall \, t \ge 0
\]
and the matrices are given by
\[
H = \begin{bmatrix} 1 & 0 \\ 0 & 1\end{bmatrix}, \quad G = \begin{bmatrix} 1 & 0 \\ 0 & 1\end{bmatrix}, \quad J = \begin{bmatrix} 0 & 0 \\ 0 & 1\end{bmatrix}.
\]
For this system, conditions \ref{ass:kerJ}, \ref{ass:regF}, \ref{ass:qual}, \ref{ass:contS} hold. The condition $\ref{ass:minNorm}$ does not hold and for this reason, the mapping $x \mapsto \lambda(x)$ given by $\lambda (x) = 0$, if $x \in \cS$
\[
\begin{pmatrix} x_1 \\ x_2 \end{pmatrix}
\mapsto
\begin{pmatrix} 2x_1^3 \\ -x_2-x_1^2 \end{pmatrix}, \quad (x_1,x_2) \not \in \cS
\]
is not Lipschitz. The example shows that we cannot have Lipschitz continuity of $\lambda^{\im}(x)$ if \ref{ass:minNorm} does not hold.
\end{exam}

\begin{exam}
Consider the same system as in Example~\ref{ex:nonLip}, but with
\[
\cS(t) = \{(x_1,x_2)\in \R^2 \, \vert \, x_2 \ge 0, x_1 \ge 0\}, \quad \forall \, t \ge 0.
\]
It is easily verified that all the assumptions of Theorem~\ref{thm:solMain} hold in this case. One could even replace $J$ with a nonsymmetric matrix given by $\left[\begin{smallmatrix} 0 & -1 \\ 1 & 1\end{smallmatrix}\right]$ to arrive at the same conclusion.
\end{exam}

%\subsection{Extensions and Special Cases}
%We are now interested in studying some generalizations of Theorem~\ref{thm:solMain}.
%
\subsection{The Case of Bounded Variation}\label{sec:solBV}

Thus far, we have considered solutions of system \eqref{eq:sysDVI} which are locally absolutely continuous. This is primarily because the variation in the set-valued mapping $\cS$ is bounded by an absolutely continuous function.
If one relaxes $\cS$ to evolve such that the Hausdorff distance between the values of $\cS$ at any two time instants is bounded by the variation of a locally BV function,\footnote{
For an interval $I \subseteq \R$, and a function $f:I \rightarrow \R^n$,
the variation of $f(\cdot)$ over the interval $I$ is the supremum of $\sum_{i=1}^k |f(s_i) - f(s_{i-1})|$ over the set of all finite sets of points $s_0 < s_1 < \cdots < s_k$ (called partitions) of $I$.
When this supremum is finite, the mapping $f(\cdot)$ is said to be of {\em bounded variation} (BV) on $I$.
We say that $f(\cdot)$ is of {\em locally bounded variation} if it is of bounded variation on each compact subinterval of $I$. For a BV function $f$, it holds that the right and left limits of $f$ are defined everywhere, and we use the notation $f(t^+):=\lim_{s \searrow t} f(s)$ and $f(t^-):=\lim_{s \nearrow t} f(s)$.} then the resulting solution will be a locally BV function. We are thus interested in the scenario where the restriction on function $\mu$ in \ref{ass:contS} is relaxed as follows:
\begin{assBV}[leftmargin=5em, start=5]
\item\label{ass:contSbv}
there exists a right-continuous locally BV function $\mu_f : [0,\infty) \rightarrow [0,\infty)$, whose variation over the interval $(0,t]$ is denoted by $\mu(t) := var(\mu_{f_{(0,t_1]}})$, and it satisfies
\[
d_{\Haus}(\cS(t_1) \cap \rge H, \cS(t_2) \cap \rge H) \le d\mu ((t_1,t_2]) = \mu(t_2) - \mu(t_1), \  \forall \, 0 \le t_1 \le t_2,
\]
where $d\mu$ denotes the Lebesgue-Stieltjes measure associated with $\mu$.
\end{assBV}

When working with BV functions, the equation~\eqref{eq:sysDVI} has to be interpreted in the sense of measures associated with the BV functions, and care must be taken in defining the appropriate solution concept.

\begin{defn}\label{def:solBV}
Consider system \eqref{eq:sysDVI} under the assumption \ref{ass:contSbv}. A right-continuous locally BV function $x:[0,\infty) \rightarrow \R^n$ is called a {\em strong} solution to system~\eqref{eq:sysDVI} if there exists a Radon measure $d\nu$, which is absolutely continuously equivalent\footnote{
A measure $d\mu$ is absolutely continuous with respect to another measure $d\nu$ if $d\nu(I)=0$ implies $d\mu(I) = 0$. The measures $d\nu$ and $d\mu$ are absolutely equivalent if they are absolutely continuous with respect to each other. By the Radon-Nikodym theorem, the density of Lebesgue measure $dt$ relative to $d\nu$, defined as, 
$\frac{dt}{d\nu}(s) = \lim_{\varepsilon \rightarrow 0} \frac{dt((s-\varepsilon,s+\varepsilon))}{d\nu((s-\varepsilon,s+\varepsilon))}$ is then well-defined.}
to $dt + d\mu$ such that the differential measure $dx$ is absolutely continuous with respect to $d\nu$, $\frac {dx}{d\nu} \in \cL_1^{loc}(I, \R^n;\nu)$, and the following relation holds:
\begin{gather}\label{eq:defSolBV}
\frac{dx}{d\nu}(t) \in f(t,x) \frac{dt}{d\nu}(t) - G (\partial \sigma_{\cS(t)}+J)^{-1}(Hx(t^+)), \quad \nu - a.e. \ t \in [0,\infty).
\end{gather}
\end{defn}

The weak solutions can be defined similarly as in Definition~\ref{def:sol}.
This concept of solution is borrowed from the literature on sweeping processes with BV solutions, see for example~\cite[Section~2.2]{EdmoThib06} or \cite{AdlyHadd14}, and this solution concept is independent of the choice of the measure $\nu$ because $(\partial \sigma_{\cS(t)}+J)^{-1}(z)$ is a cone for any $z \in \dom (\partial \sigma_{\cS(t)}+J)^{-1}$.
An important observation that can be made from \eqref{eq:defSolBV} is that, at time $t_i$, if there is a jump in $\mu$, so that the measure $d\mu$ is supported on the singleton $\{t_i\}$, then a jump in the state trajectory $x$ at $t_i$ is also possible, and this jump is represented by:
\[
x(t_i^+) - x(t_i^-) \in - G (\partial \sigma_{\cS(t)}+J)^{-1}(Hx(t_i^+)).
\]
When $G,H$ are identity matrices, and $J = 0$, this corresponds to
\[
x(t_i^+) -x(t_i^-) \in -\cN_{\cS(t_i)}(x(t_i^+))
\]
which is equivalent to solving the following quadratic optimization problem over a convex set:
\[
x(t_i^+) = \argmin_{v\in\cS(t_i)} \|x(t_i^-) - v\|^2.
\]
The result of Theorem~\ref{thm:solMain} can now be generalized as follows:
\begin{cor}
Assume that system~\eqref{eq:sysDVI} satisfies \ref{ass:kerJ}, \ref{ass:regF}, \ref{ass:qual}, \ref{ass:minNorm} and \ref{ass:contSbv}; then there exists a unique {weak} solution $x$ in the sense of Definition~\ref{def:solBV}.
\end{cor}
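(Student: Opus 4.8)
The plan is to follow the proof of Theorem~\ref{thm:solMain} line by line and to isolate the single place where the absolute continuity of $\mu$ was used, replacing it with its bounded-variation counterpart. The first observation is that Lemmas~\ref{lem:maxPhi}, \ref{lem:kerJ}, \ref{lem:minNormLip}, and \ref{lem:linMaxMon} make no reference whatsoever to the time-regularity of $\cS$: they rely only on \ref{ass:kerJ}, \ref{ass:regF}, \ref{ass:qual}, \ref{ass:minNorm} and the pointwise closedness and convexity of $\cS(t)$. Consequently, under \ref{ass:contSbv} the same bijective change of variables $z = Rx$ together with the decomposition \eqref{eq:sysDecomp}--\eqref{eq:newCoord} brings \eqref{eq:sysDVI} into the form governed by a globally Lipschitz drift $g(t,\cdot)$ and the maximal monotone operator $\Psi(t,\cdot) = R^{-1}H^\top\Phi(t,HR^{-1}\cdot)$; the only change is that the inclusion must now be read as the measure-differential inclusion of Definition~\ref{def:solBV}.

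The heart of the argument is a bounded-variation replacement for Lemma~\ref{lem:solIncmu0}. For a right-continuous locally BV function $\tilde g$, I would establish existence and uniqueness of a right-continuous locally BV $\tilde z$ solving the measure inclusion of Definition~\ref{def:solBV} with drift $\tilde g$ by invoking the BV existence theory for processes driven by time-dependent maximal monotone operators (the BV analogue of \cite[Theorem~3]{KunzMont97}, as developed in \cite{EdmoThib06, AdlyHadd14}). The two hypotheses that feed such a theorem transfer directly from the proof of Lemma~\ref{lem:solIncmu0}: the least-norm growth bound \eqref{eq:minNormPsiBound}--\eqref{eq:bndMinLambda} never used the absolute continuity of $\mu$, only its local boundedness, so it still produces a bound of the form $|\lambda^{\im}(t,x)| \le c_1|x| + c_2\mu(t) + c_3$ with $\mu$ now BV; and the distance estimate derived there, $d_{\svm}(\Psi(t_1,\cdot),\Psi(t_2,\cdot)) \le d_{\Haus}(\cS(t_1)\cap\rge H,\cS(t_2)\cap\rge H)$, combined now with \ref{ass:contSbv}, gives $d_{\svm}(\Psi(t_1,\cdot),\Psi(t_2,\cdot)) \le d\mu((t_1,t_2])$, which is exactly the variation control the BV theorem requires.

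With the frozen-drift inclusion solvable, the contraction argument that absorbs the Lipschitz part is reproduced, and here lies the one genuinely new estimate. On the absolutely continuous part of the driving measure the monotonicity of $\Psi(t,\cdot)$ yields, verbatim as in \eqref{eq:estGron}, $\tfrac12\tfrac{d}{dt}|z_1 - z_2|^2 \le |\tilde g_1 - \tilde g_2|\,|z_1 - z_2|$; note that the drift enters only through the Lebesgue density $\tfrac{dt}{d\nu}$, which vanishes at atoms of $d\mu$, so it contributes nothing at jumps. At an atom $\{t_i\}$ of mass $\Delta_i$ both iterates jump by the resolvent $(\,I + \Delta_i\,\Psi(t_i,\cdot)\,)^{-1}$ of the \emph{same} maximal monotone operator, whence $|z_1(t_i^+) - z_2(t_i^+)| \le |z_1(t_i^-) - z_2(t_i^-)|$ by nonexpansiveness of resolvents; thus jumps never increase the distance and estimate \eqref{eq:estGron} survives unchanged. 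Feeding this into the Picard iteration $\dot z_{i+1} \in -\Psi(t,z_{i+1}) + g(t,z_i)$ reproduces the series bound $\tfrac{(\overline\rho_T\,t)^i}{i!}\|z_2 - z_1\|_{\cL_\infty([0,T],\R^n)}$, so the iterates converge uniformly to a weak BV solution; uniqueness follows from the same estimate with $\tilde g_1 = \tilde g_2$, and the bijectivity of $z = Rx$ transfers both conclusions back to \eqref{eq:sysDVI}.

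I expect the main obstacle to be twofold. First, matching the solution notion delivered by the cited BV existence theorem to the measure-differential inclusion of Definition~\ref{def:solBV}, using the fact (recorded just after that definition) that the concept is independent of the chosen equivalent measure $\nu$ because $(\partial\sigma_{\cS(t)}+J)^{-1}$ is positively homogeneous. Second, the jump analysis itself, namely confirming that the nonexpansiveness of the resolvent at atoms of $d\mu$ is precisely what preserves the bound \eqref{eq:estGron} and thereby the contraction, since this is the only ingredient of the absolutely continuous proof that does not extend by mere reinterpretation.
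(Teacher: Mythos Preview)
Your proposal is correct and follows essentially the same blueprint as the paper's own (very brief) sketch: reuse Lemmas~\ref{lem:maxPhi}--\ref{lem:linMaxMon} unchanged, replace the absolutely continuous existence result behind Lemma~\ref{lem:solIncmu0} with its BV counterpart, and then redo the contraction. The paper invokes \cite[Theorem~1]{KunzMont97} specifically for the BV step (rather than the sweeping-process references you cite) and leaves the jump handling implicit, whereas you spell out the nonexpansiveness argument at atoms---a welcome elaboration, though note that since $(\partial\sigma_{\cS(t)}+J)^{-1}(z)$ is cone-valued the ``resolvent with step $\Delta_i$'' formulation is not quite the right picture; the contraction at jumps follows directly from monotonicity of $\Psi(t_i,\cdot)$ applied to $z_i^+ - z_i^- \in -\Psi(t_i,z_i^+)$.
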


The proof of this result follows the same blueprint as laid out in the proof of Theorem~\ref{thm:solMain}. The difference starts appearing when we invoke Lemma~\ref{lem:solIncmu0}, as the existence of solution must now be proven for \eqref{eq:sysFinalmu0} when the time-dependence on the right-hand side is only of bounded variation. The result from \cite[Theorem~1]{KunzMont97} can now be used, and the rest of the arguments can be tailored accordingly to arrive at the desired result.
For the sake of briefness, the details are omitted in this article.

%\subsubsection{Time-Varying Complementarity Systems}

\subsection{Cone Complementarity Systems}

The framework of EVIs is especially useful for studying models of electrical circuits with complementarity relations. The result of Theorem~\ref{thm:solMain} allows us to capture well-posedness for a certain class of complementarity systems.
In particular, we can treat the cone complementarity systems described by
\begin{subequations}\label{eq:sysForceDVI}
\begin{gather}
\dot x(t) = f(t,x) + G \eta(t) \label{eq:sysForceDVIa} \\
v(t) = Hx(t) + J \eta(t) + h(t), \label{eq:sysForceDVIb} \\
\cK \ni v(t) \perp \eta(t) \in \cK^* \label{eq:sysForceDVIc},
\end{gather}
\end{subequations}
where $h:[0,\infty) \rightarrow \R^{d_s}$ is a Lebesgue-measurable function, and $\cK$ is a closed, convex polyhedral cone, and $\cK^*$ denotes the dual cone as defined in \eqref{eq:defDual}. We introduce the set-valued mapping
\begin{equation}\label{eq:defSconv}
\cS(t) := \left\{z \in \R^{d_s} \, \vert \, z+h(t) \in \cK \right\}
\end{equation}
and reformulate the dynamics of \eqref{eq:sysForceDVI} in the form of \eqref{eq:sysRef}.
Towards this end, we recall the following fundamental relation from convex analysis~\cite[Proposition~1.1.3]{FaccPang03}:
\begin{equation}\label{eq:compConv}
\cK\ni v \perp \eta \in \cK^* \Leftrightarrow 
\eta \in -\partial \psi_{\cK}(v) = -\cN_{\cK}(v)
\end{equation}
where $\psi_S$ denotes the indicator function of the set $S$.
One can effectively rewrite \eqref{eq:sysForceDVIb}-\eqref{eq:sysForceDVIc} as
\begin{align}
\eta(t) \in -\cN_{\cS(t)} (Hx(t) + J\eta(t)).
\end{align}
The result of Theorem~\ref{thm:solMain} can now be invoked by checking the conditions \ref{ass:qual}, \ref{ass:minNorm}, \ref{ass:contS} for $\cS$. But, because of the added structure on the set $\cK$, and the regularity assumption on $h$, it is possible to rewrite the conditions for the well-posedness of \eqref{eq:sysForceDVI}.
\begin{cor}\label{cor:solLCCS}
Consider that system~\eqref{eq:sysForceDVI} satisfies the assumptions \ref{ass:kerJ}, \ref{ass:regF}, and suppose that
\begin{itemize}
\item The cone $\cK$ and the matrix $J$ satisfy the condition\footnote{The condition~\eqref{eq:condRintCone} is not very restrictive because we always have the inclusion $\cK-h(t) = \partial\sigma_{\cK-h(t)}(0) \subseteq \rge (\partial \sigma_{\cK-h(t)}+J)$. However, the set inclusions are not preserved by the $\rint$ operator in general.}
\begin{subequations}
\begin{equation}\label{eq:condRintCone}
\rint (\cK - h(t)) \subseteq \rint (\rge (\partial \sigma_{\cK-h(t)}+J)).
\end{equation}
\begin{equation}\label{eq:inclJH}
J \cK^* \subseteq \rge H.
\end{equation}
\end{subequations}
\item The matrix $H$ is such that
\footnote{Equation~\eqref{eq:consH} is to be interpreted in the sense that, for each $h \in \R^{d_s}$, there exists $x \in \R^n$, and $v \in \cK$, such that $h = Hx - v$.
Clearly, if $H$ defines a surjective mapping, then \eqref{eq:consH} holds but it may also hold in more general cases.
}
\begin{equation}\label{eq:consH}
\rge H - \cK = \R^{d_s}.
\end{equation}
\item For each $x \in \R^n$ and $t \ge 0$, if the set $\Lambda_x(t) := \{\eta \in \cK^* \, \vert \, v= Hx + J \eta + h(t) \in \cK, \langle\eta,v\rangle = 0\}$ has a nonzero element, then
\begin{equation}\label{eq:minNormCone}
\Lambda_x(t) \cap \rge (J+J^\top) \neq \emptyset.
\end{equation}
\end{itemize}
If $h$ is locally absolutely continuous (respectively, right-continuous BV), then there exists a unique {weak} solution to \eqref{eq:sysForceDVI} which is continuous (resp.~right-continuous BV).
\end{cor}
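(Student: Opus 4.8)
The plan is to verify that the reformulated conditions \eqref{eq:condRintCone}--\eqref{eq:minNormCone}, together with the regularity of $h$, imply the hypotheses under which Theorem~\ref{thm:solMain} (absolutely continuous case) and its bounded-variation generalization apply to the EVI with $\cS(t) := \cK - h(t)$. First I would record the elementary facts that $\cS(t)$ is closed and convex (a translate of the polyhedral cone $\cK$) and that $\rge \partial\sigma_{\cS(t)} = \cS(t)$. Then, using \eqref{eq:compConv} and the translation-invariance $\cN_{\cK-h}(z) = \cN_{\cK}(z+h)$, I would show $\lambda \in \Phi(t,Hx) = (\partial\sigma_{\cS(t)}+J)^{-1}(Hx)$ iff $\lambda = -\eta$ for some $\eta \in \Lambda_x(t)$; that is, $\Phi(t,Hx) = -\Lambda_x(t)$. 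I would also note that any $\lambda \in \cN_{\cS(t)}(Hx - J\lambda)$ lies in the polar cone $-\cK^*$, so the associated multiplier $\eta = -\lambda$ always belongs to $\cK^*$.

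Next I would verify the constraint qualification \ref{ass:qual}. Since $\rint\cS(t) \subseteq \rint(\rge(\partial\sigma_{\cS(t)}+J))$ by \eqref{eq:condRintCone}, it suffices to produce a point of $\rge H$ in $\rint\cS(t)$. The key observation is that, by taking polar cones of $\rge H - \cK$, condition \eqref{eq:consH} is equivalent to $\ker H^\top \cap \cK^* = \{0\}$. If $\rge H \cap \rint\cS(t)$ were empty, proper separation of the subspace $\rge H$ from the relatively open convex set $\rint\cS(t)$ would yield a nonzero $p \perp \rge H$ with $\langle p, s\rangle \ge 0$ for all $s \in \cK - h(t)$; exploiting that $\cK$ is a cone forces $p \in \cK^*$, hence $0 \neq p \in \ker H^\top \cap \cK^*$, contradicting \eqref{eq:consH}. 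Thus $\rge H \cap \rint\cS(t) \neq \emptyset$ and \ref{ass:qual} follows. For \ref{ass:minNorm} I would use $\Phi(t,Hx) = -\Lambda_x(t)$ and the fact that $\rge(J+J^\top)$ is a subspace, hence invariant under negation: whenever $Hx \in \rge(\partial\sigma_{\cS(t)}+J)$ the set $\Lambda_x(t)$ is nonempty, and either $\Lambda_x(t) = \{0\}$, in which case $0 \in \rge(J+J^\top)\cap\Lambda_x(t)$ trivially, or it contains a nonzero element, in which case \eqref{eq:minNormCone} gives $\Lambda_x(t)\cap\rge(J+J^\top)\neq\emptyset$; negating yields $\rge(J+J^\top)\cap\Phi(t,Hx)\neq\emptyset$, which is precisely \ref{ass:minNorm}.

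It remains to address \ref{ass:contS}. The restrictive inclusion $\rge J \subseteq \rge H$ is not assumed here, so I would re-examine where it entered the proof of Theorem~\ref{thm:solMain}, namely in Lemma~\ref{lem:solIncmu0} to guarantee $v_i = Hz_i - J\lambda_i \in \cS(t_i)\cap\rge H$, and (with \ref{ass:qual}) to ensure $\rge H \cap \cS(t) \neq \emptyset$. The latter is already secured by the separation argument above. For the former, since every admissible $\lambda_i$ satisfies $\lambda_i = -\eta_i$ with $\eta_i \in \cK^*$, we have $J\lambda_i = -J\eta_i \in J\cK^* \subseteq \rge H$ by \eqref{eq:inclJH}, so $v_i \in \rge H$ as required; thus \eqref{eq:inclJH} plays exactly the role that $\rge J \subseteq \rge H$ played in the general statement. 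Finally, for the variation bound I would invoke the polyhedral estimate of Lemma~\ref{lem:hausBndK}, yielding a constant $c_H$ with $d_{\Haus}(\cS(t_1)\cap\rge H, \cS(t_2)\cap\rge H) \le c_H\,|h(t_1)-h(t_2)|$; setting $\mu := c_H V_h$, where $V_h$ is the variation function of $h$, gives a locally absolutely continuous $\mu$ when $h$ is locally absolutely continuous (verifying \ref{ass:contS}) and a right-continuous locally BV bound when $h$ is right-continuous BV (verifying \ref{ass:contSbv}). Applying Theorem~\ref{thm:solMain} in the first case and its bounded-variation counterpart in the second then delivers the unique weak solution of the stated regularity.

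The step I expect to be the main obstacle is the bookkeeping in \ref{ass:contS}: one must confirm that $\rge J \subseteq \rge H$ was used only through the membership $J\lambda_i \in \rge H$, so that the cone structure (which forces $\eta_i \in \cK^*$) lets \eqref{eq:inclJH} substitute for it, and one must supply the polyhedral Lipschitz/BV bound for the intersection $\cS(t)\cap\rge H$ rather than for $\cS(t)$ itself. By contrast, the polarity equivalence underlying \ref{ass:qual} is clean and \ref{ass:minNorm} is essentially a restatement of \eqref{eq:minNormCone}, so the technical weight sits in \ref{ass:contS}.
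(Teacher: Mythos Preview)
Your proposal is correct and follows the same overall strategy as the paper: verify that the cone-specific hypotheses imply \ref{ass:qual}, \ref{ass:minNorm}, and \ref{ass:contS} (resp.\ \ref{ass:contSbv}) for $\cS(t)=\cK-h(t)$, and then invoke Theorem~\ref{thm:solMain} or its BV variant. Your identification $\Phi(t,Hx)=-\Lambda_x(t)$ and the ensuing verification of \ref{ass:minNorm} are exactly the paper's argument.

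There are two places where your route differs from the paper's. For \ref{ass:qual}, the paper gives a direct construction: from \eqref{eq:consH} written as $\rge H\cap(\{z\}+\cK)\neq\emptyset$ for every $z$, it picks $z=v_{\inn}-h(t)$ with $v_{\inn}\in\rint\cK$ and obtains $Hx=v+v_{\inn}-h(t)$ with $v\in\cK$, noting that $v+v_{\inn}\in\rint\cK$ because $\cK$ is a convex cone. You instead pass to the dual characterisation $\ker H^\top\cap\cK^*=\{0\}$ and argue by separation; both are valid, and your dual reformulation is a clean equivalent of \eqref{eq:consH}. For \ref{ass:contS}, the paper simply asserts that it ``is verified by \eqref{eq:inclJH} and invoking Lemma~\ref{lem:hausBndK}'', whereas you spell out precisely why the weaker inclusion $J\cK^*\subseteq\rge H$ can replace $\rge J\subseteq\rge H$ in the proof of Lemma~\ref{lem:solIncmu0}: every admissible $\lambda_i$ lies in $-\cK^*$ (since $\cN_{\cK-h}\subseteq-\cK^*$), hence $J\lambda_i\in\rge H$. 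Your expanded bookkeeping here is a genuine clarification of what the paper leaves implicit.
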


To prove this result as an application of Theorem~\ref{thm:solMain}, we basically show that the assumptions \ref{ass:qual}, \ref{ass:minNorm} and \ref{ass:contS} (resp.~\ref{ass:contSbv}) hold for system \eqref{eq:sysForceDVI}. It is first seen that the condition \eqref{eq:minNormCone} implies \ref{ass:minNorm}, because $\Lambda_x (t) = -(\partial \sigma_{\cS(t)}+J)^{-1}(Hx)$. This equivalence is obtained by showing that the two sets are contained in each other. Indeed, first take $\eta_x \in -(\partial \sigma_{\cS(t)}+J)^{-1}(Hx)$, then
\begin{align*}
& \eta_x \in -(\partial \sigma_{\cS(t)}+J)^{-1}(Hx) \\
\Longleftrightarrow \quad & \eta_x \in -\cN_{\cS(t)}(Hx+J\eta_x) & &\text{(from Proposition~\ref{prop:invConj})}\\
\Longleftrightarrow \quad & \eta_x \in -\cN_{\cK}(Hx+J\eta_x + h(t)) & &\text{(from \eqref{eq:defSconv})}\\
\Longleftrightarrow \quad & \eta_x \in \Lambda_x (t) & &\text{(from \eqref{eq:compConv})}.
\end{align*}
The inclusion in the other direction follows the same arguments.

For verifying \ref{ass:qual}, we first observe that \eqref{eq:consH} says: for each $z \in \R^{d_s}$, $\rge H \cap (\{z\}+\cK) \neq \emptyset$. For a given $t \ge 0$, choose $z = v_{\inn} - h(t)$, for some $v_{\inn}$ in the relative interior of $\cK$. Then, there exist $v \in \cK$ and $x \in \R^n$ such that $Hx = v + v_{\inn} - h(t)$. Since $\cK$ is a convex cone, it can be shown that $v + v_{\inn} \in \rint \cK$, and hence, for each $t \ge 0$, $\rge H$ intersects nontrivially with $\rint(\cK -h(t))\subset \rint(\rge(\partial\sigma_{\cK-h(t)}+J))$, where the last inclusion is due to \eqref{eq:condRintCone} and \ref{ass:qual} is verified.

The hypothesis \ref{ass:contS} (resp.~\ref{ass:contSbv}) is verified by \eqref{eq:inclJH} and invoking Lemma~\ref{lem:hausBndK} in Appendix~\ref{app:litLemmas}. Indeed, it is seen that $\cK$ being a convex polyhedral cone allows the existence of a constant $c_\cK$ such that
\[
d_{\Haus} (\cS(t_1) \cap \rge H, \cS(t_2) \cap \rge H) \le c_\cK \, \vert h(t_1) - h(t_2) \vert,
\]
where, by assumption, $h$ is a locally absolutely continuous (resp.~right-continuous BV) function.

\section{Regulation with State Feedback}\label{sec:static}

We will now use the results on the well-posedness of system~\eqref{eq:sysDVI} to solve the output regulation problem. For the sake of stability problems treated in the sequel, it will be assumed that the solutions we work with are indeed the strong solutions without any loss of generality, because one can otherwise work with strong solutions which approximate the weak solution arbitrarily closely in supremum norm.

For the control design problem treated in this section, all the states of the plant~\eqref{eq:sysDVI} and \eqref{eq:sysRef} are assumed to be available for feedback and thus a control input with static state feedback can be designed which achieves the stability and regulation.
In the formulation of our results, the following terminology is used: A quadruple of matrices $(A,B,C,D)$ is called {\em strictly passive} if there exist a scalar $\gamma>0$ and a symmetric positive definite matrix $P$ such that
\begin{equation}\label{eq:defPass}
\begin{bmatrix}
A^\top P + PA+\gamma P & PB-C^\top \\
B^\top P-C & -(D+D^\top)
\end{bmatrix}
\le 0.
\end{equation}

\begin{thm}\label{thm:static}
Consider systems~\eqref{eq:plant}, \eqref{eq:sysRef} under assumptions \ref{ass:qual}, \ref{ass:minNorm}, \ref{ass:contS}, and the output regulation error \eqref{eq:refSig}. Assume that there exist matrices $\Pi \in \R^{n \times d_r}$ and $M \in \R^{d_u \times d_r}$ such that
\begin{subequations}\label{eq:condReg}
\begin{gather}
\Pi A_r  = A \Pi + BM + F \label{eq:condRega} \\
C_r - C \Pi = 0 \label{eq:condRegb} \\
H_r - H\Pi = 0.\label{eq:condRegc}
\end{gather}
\end{subequations}
Furthermore, assume that there exists a state feedback matrix $K \in \R^{d_u \times n}$ which renders the quadruple $(\widetilde A, \widetilde G, \widetilde H, \widetilde J)$ strictly passive, where
\begin{align}
\widetilde A &:= A + B K  & \widetilde G &:= \begin{bmatrix}G & \Pi G_r \end{bmatrix} \\
\widetilde H &:= \begin{bmatrix} H \\ H \end{bmatrix} & \widetilde J &:=\begin{bmatrix} J & J_r\\J & J_r\end{bmatrix}.
\end{align}
Then the output regulation problem is solvable with the following static feedback control law:
\begin{equation}\label{eq:staticControl}
u(t) = Kx(t) + (M-K\Pi) x_r(t).
\end{equation}
\end{thm}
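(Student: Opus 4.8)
The plan is to follow the internal-model / Francis template: pass to steady-state error coordinates, use the regulator equations \eqref{eq:condReg} to cancel the exogenous terms, and then run a single passivity-based Lyapunov argument that delivers both regulation and closed-loop stability, while well-posedness is obtained by verifying the hypotheses of Theorem~\ref{thm:solMain} for the closed loop. Concretely, I introduce the error $e := x - \Pi x_r$. Substituting \eqref{eq:staticControl} into \eqref{eq:planta}, writing $x = e + \Pi x_r$, the $x_r$-dependent terms carry the coefficient $A\Pi + BM + F - \Pi A_r$, which vanishes by \eqref{eq:condRega}; this gives $\dot e = \widetilde A e + G\eta - \Pi G_r \eta_r$. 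Next, \eqref{eq:condRegc} yields $Hx = He + H_r x_r$, so that $v - v_r = He + J\eta - J_r\eta_r$, and \eqref{eq:condRegb} yields $w = Cx - C_r x_r = Ce$. Thus the regulation objective $w\to 0$ is reduced to $e\to 0$, and all exosystem quantities have been absorbed into $e$ and the two multipliers.

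For well-posedness I would note that, along the given reference $x_r(\cdot)$, the closed loop is an instance of \eqref{eq:sysDVI} with vector field $f(t,x) = \widetilde A x + (B(M-K\Pi)+F)\,x_r(t)$, which is globally Lipschitz in $x$ with constant modulus and Lebesgue-integrable in $t$ (since $x_r$ is continuous), so \ref{ass:regF} holds, while \ref{ass:qual}, \ref{ass:minNorm}, \ref{ass:contS} are assumed directly. The only remaining hypothesis is \ref{ass:kerJ}, which I extract from \eqref{eq:defPass} specialized to $(\widetilde A,G,H,J)$ (obtained by restricting the stacked LMI to the first component of the multiplier): writing it as $e^\top Q e + 2e^\top(PG-H^\top)w - w^\top(J+J^\top)w \le 0$ for all $(e,w)$, fix $w\in\ker(J+J^\top)$ so the last term drops, then take $e=\lambda(PG-H^\top)w$ and let $\lambda\to 0^+$; the linear term dominates and forces $(PG-H^\top)w=0$. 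Hence $\ker(J+J^\top)\subseteq\ker(PG-H^\top)$ with the same symmetric positive definite $P$, i.e. \ref{ass:kerJ} holds, and Theorem~\ref{thm:solMain} supplies a unique weak solution.

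The heart of the proof is the dissipation estimate. Set $\zeta := \begin{bmatrix}\eta \\ -\eta_r\end{bmatrix}$, so that $\dot e = \widetilde A e + \widetilde G\zeta$, and take $V(e) = e^\top P e$ with $P,\gamma$ from \eqref{eq:defPass}. Evaluating the passivity LMI on $(e,\zeta)$ gives, a.e.\ along strong solutions, $\dot V \le -\gamma V + 2(\widetilde H e + \widetilde J\zeta)^\top\zeta$. A direct computation with $\widetilde H,\widetilde J$ as in the theorem statement shows $\widetilde H e + \widetilde J\zeta = \begin{bmatrix} v-v_r \\ v-v_r\end{bmatrix}$, whence $(\widetilde H e + \widetilde J\zeta)^\top\zeta = \langle v-v_r,\ \eta-\eta_r\rangle$. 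Because $\eta\in -\cN_{\cS(t)}(v)$ and $\eta_r\in -\cN_{\cS(t)}(v_r)$ for the \emph{same} set $\cS(t)$, monotonicity of the normal-cone operator gives $\langle v-v_r,\ \eta-\eta_r\rangle\le 0$. Therefore $\dot V\le -\gamma V$, so $V(t)\le V(0)e^{-\gamma t}$, $e\to 0$ exponentially, and $w=Ce\to 0$, which is regulation. For closed-loop stability I set $x_r\equiv 0$, so $e=x$ and $\zeta=(\eta,0)$, and the same estimate (using $0\in\cS(t)$, so that the origin with $\eta=0$ is an equilibrium) yields global asymptotic stability of the origin.

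I expect the main obstacle to be the dissipativity step, namely certifying $(\widetilde H e + \widetilde J\zeta)^\top\zeta\le 0$: it is precisely here that the three regulator equations must conspire (through $H\Pi=H_r$) to collapse the stacked output into two copies of $v-v_r$, and that the plant and exosystem are driven by the \emph{same} time-varying set $\cS(t)$, so that monotonicity of the single operator $\cN_{\cS(t)}$ applies to $\eta$ and $\eta_r$ simultaneously. The remaining delicate point is the passivity-to-\ref{ass:kerJ} implication, which must be run with the very $P$ that serves as the Lyapunov matrix, thereby tying the well-posedness hypothesis to the stability certificate.
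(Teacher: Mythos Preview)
Your proposal is correct and follows essentially the same route as the paper: introduce $e=x-\Pi x_r$, use \eqref{eq:condReg} to obtain $\dot e=\widetilde A e+\widetilde G\widetilde\eta$, take $V=e^\top P e$ and exploit the strict passivity LMI together with the monotonicity of $\cN_{\cS(t)}$ to get $\dot V\le-\gamma V$, with well-posedness coming from Theorem~\ref{thm:solMain} after extracting \ref{ass:kerJ} from the passivity of the sub-block $(\widetilde A,G,H,J)$. Your packaging of the cross term as $\langle v-v_r,\eta-\eta_r\rangle$ and your explicit $\lambda\to0^+$ argument for $\ker(J+J^\top)\subseteq\ker(PG-H^\top)$ are minor presentational differences (the paper just cites \cite{CamlIann14} for the latter), not a different method.
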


\begin{rem}
In the work of \cite{Francis77}, the control law \eqref{eq:staticControl} was proposed to solve the output regulation problem in LTI systems, where $\Pi$ and $M$ were obtained as solution of \eqref{eq:condRega}, \eqref{eq:condRegb} only, and $K$ is any matrix that makes $(A+BK)$ Hurwitz.
The strict passivity requirement, and additional conditions on the matrix $\Pi$ in \eqref{eq:condRegc}  are required in the well-posedness and stability analyses for the class of systems considered in this paper.
\end{rem}

\begin{rem}
The intuition behind the stability analysis is that the matrix $\Pi$ embeds the reference signal $x_r$ in the state space $\R^n$ to which the state variable $x$ converges, that is, the control action regulates the error variable $e:=x - \Pi x_r$ to the origin. The internal model principle (conditions \eqref{eq:condRega}, \eqref{eq:condRegb}) shapes the closed-loop dynamics to match $\Pi \dot x_r$.
If we let $\widetilde \eta :=(\eta^\top, -\eta_r^\top)^\top$, then
the additional conditions allow us to make the error dynamics passive with input $\widetilde G \widetilde\eta$ and the auxiliary output $\widetilde H e + \widetilde J \widetilde \eta$.
The monotonicity of the normal cone operator then leads to the negative definiteness of the derivative of the Lyapunov function $e^\top P e$.
\end{rem}

\begin{figure}[!t]
\centering
\begin{tikzpicture}
\draw (2,0) node [rectangle, draw, minimum height =0.75cm, text centered] (q1) {$\dot e = \widetilde A e+\widetilde G\widetilde\eta$};
\draw (2,-2) node [rectangle, draw, minimum height =0.75cm, text centered] (q2) {$\begin{aligned} \eta &\in -\cN_{\cS}(Hx+J\eta) \\ \eta_r & \in -\cN_{\cS}(H_rx_r+J_r\eta_r)\end{aligned}$};

\coordinate (p1) at ($(q1.base east)+(1.5,0)$);
\coordinate (p2) at ($(q2.west)+(-0.75,0)$);
\draw [thick, ->] (q1.base east) -- (p1) -- (p1 |- q2.east) node[anchor=south west]{$Hx,H_rx_r$} -- (q2.east);
\draw [thick, ->] (q2.west) -- (p2) -- (p2 |- q1.base west) -- (q1.base west) node[anchor=south east]{$\widetilde \eta \quad$};
\draw [thick, ->] (q1.10) -- ++(1.5,0) node[anchor = west] {$\widetilde y = \widetilde H e + \widetilde J \widetilde\eta$};
\end{tikzpicture}
\caption{Passivity based interpretation of the error dynamics.}
\label{fig:intPass}
\end{figure}
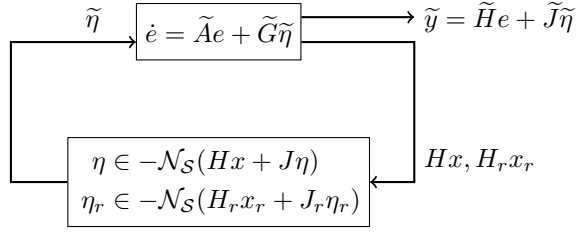

\begin{myproof}{of Theorem~\ref{thm:static}}
The proof consists of two parts: the first one shows existence of unique solutions, and the second part analyses the convergence of the error dynamics.

{\em a) Well-posedness:} With control input \eqref{eq:staticControl}, the closed-loop system is written as
\begin{gather*}
\dot x(t) = (A+BK) x(t) + (F+ BM-BK\Pi )x_r(t) +G \eta(t) \\
\eta  \in -\cN_{\cS(t)}(Hx(t)+J\eta(t)).
\end{gather*}
Since $(\widetilde A,\widetilde G,\widetilde H, \widetilde J)$ is assumed to be strictly passive, it follows that $(\widetilde A, G,H,J)$ is also strictly passive because, using \eqref{eq:defPass}, the matrix of  interest is a sub-block of the negative semi-definite matrix associated with $(\widetilde A,\widetilde G,\widetilde H, \widetilde J)$, and is hence negative-semidefinite.
Thus, the matrix $J$ is positive semidefinite and $\ker (J+J^\top) \subseteq \ker(PG-H^\top)$, see \cite{CamlIann14} for proof.
All the remaining hypothesis of Theorem~\ref{thm:solMain} hold by construction, and hence the closed-loop system has a unique solution.

{\em b) Regulation:} Let $\Pi$ be a matrix that satisfies \eqref{eq:condReg} and introduce the variable $e := x - \Pi x_r$. The output regulation is achieved if we can show that $\lim_{t\rightarrow \infty} e(t) = 0$, since
\begin{align*}
w(t) & = C x (t) -C_r x_r(t) \\
& = C x(t) - C\Pi x_r(t) = C e(t).
\end{align*}
To show that $e(t) \rightarrow 0$ as $t \rightarrow \infty$, we observe that
\begin{align*}
\dot e(t) & = (A+BK)x(t) + (F+BM - BK\Pi) x_r(t) %\\ & \qquad 
+G \eta(t) - \Pi A_r x_r(t) - \Pi G_r \eta_r(t) \\
& = (A+BK)x(t) - (\underbrace{\Pi A_r - F - B M}_{=A\Pi} +BK\Pi)x_r (t) % \\ &\qquad
 + G \eta(t) - \Pi G_r \eta_r(t) \\
& = (A+BK)x(t) - (A+BK)\Pi x_r(t) + \widetilde G \widetilde\eta
\end{align*}
where we recall that $\widetilde \eta := (\eta^\top, -\eta_r^\top)^\top$.
Now introduce the Lyapunov function $V(e) = e^\top P e$, so that the following holds for almost all $t \ge 0$ :
\begin{align*}
\dot V(e(t)) & = e(t)^\top \left((A+BK)^\top P+ P(A+BK) \right) e(t) %\\ & \qquad
 + 2 e(t)^\top P \widetilde G \widetilde\eta(t) \\
& \le - \gamma e(t)^\top P e(t) + 2 e(t)^\top \widetilde H^\top \widetilde\eta (t)+\widetilde\eta(t)^\top(\widetilde J+\widetilde J^\top)\widetilde\eta \\
& = - \gamma e(t)^\top P e(t) + 2 \langle \eta(t), H e(t) + J \eta(t) - J_r \eta_r(t)\rangle %\\ & \quad
 - 2 \langle \eta_r(t), H e(t) + J \eta(t) - J_r\eta_r(t)\rangle
\end{align*}
where the inequality was obtained by using the passivity of $(A+BK,\widetilde G,\widetilde H,\widetilde J)$.
Recalling that $He(t) = Hx(t) - H\Pi x_r(t) = Hx(t) - H_r x_r(t)$, the monotonicity of the normal cone operator leads to, for each $t \ge 0$,
\[
\begin{aligned}
\langle \eta(t), Hx(t) + J \eta(t) - H_r x_r (t) - J_r \eta_r(t) \rangle & \le 0 \\
\langle \eta_r(t), H x(t) + J \eta(t) -H_r x_r(t) - J_r \eta_r(t) \rangle & \ge 0.
\end{aligned}
\]
It thus follows that $\dot V(e(t)) < 0$ for all $e(t) \neq 0$, and thus $e(t) \rightarrow 0$ as $t \rightarrow \infty$.

{\em Closed-loop stabilization:} To show that $x$ converges to zero when $x_r\equiv 0$, we observe that asymptotic convergence of $e(t) = x(t) -\Pi x_r(t)$ to the origin has already been proven. In particular, when $x_r \equiv 0$, the state $x(t)$ converges to the origin asymptotically.
\end{myproof}

\begin{rem}
One can also generalize the result of Theorem~\ref{thm:static} to the case where the variation in the set-valued map $\cS$ is governed by a locally BV function as in \ref{ass:contSbv}. In this case, it must be verified that, at the discontinuities of $x$ or $x_r$, the Lyapunov function $V(e)$ is also decreasing, that is, $e^{+\top} P e^+ \le e^{-\top} P e^-$, where $e^-, e^+$ denote the value of $e$ before and after a jump instant, respectively. We can write $e^+ - e^- = \widetilde G\widetilde \eta$, so that
\[
e^{-\top}Pe^- = e^{+\top}Pe^+ -2e^{+\top}P\widetilde G \widetilde \eta + \widetilde \eta^\top \widetilde G^\top P \widetilde G \widetilde \eta \ge e^{+\top}Pe^+
\]
where the middle term is nonnegative due to the same arguments as used in the proof of Theorem~\ref{thm:static}. The formal arguments to deduce asymptotic stability for systems with BV solutions can be found in \cite{TanwBrog14a, TanwBrog15, TanwBrog16}.
\end{rem}

%\begin{rem}
%We can also consider the scenario where $G = 0$ in \eqref{eq:plant}, so that the plant to be controlled is a simple linear time-variant system without the set-valued component in its description, and the objective is to track nonsmooth reference signals generated by the EVI~\eqref{eq:sysRef}.
%One can then add the set-valued component to the system by choosing
%\[
%u = u_\reg + u_\eta
%\]
%where $u_\reg$ is the usual state feedback given by
%\[
%u_\reg := K x + (M - K\Pi)x_r
%\]
%and $u_\eta$ is the set-valued control defined by
%\[
%u_\eta \in -\cN_{\cS(t)} (Hx+ J u_\eta)
%\]
%for some matrix $R$.
%If the conditions of Theorem~\ref{thm:static} hold with $G$ replaced by $B$, then the desired regulation problem is solved.
%In the particular case, when $\cS(t)$ is a fixed convex polytope, one can relax \eqref{eq:condRegc} as
%\[
%H_r - H\Pi = R
%\]
%for some matrix $R$ by choosing
%\[
%u_\eta \in -\cN_{\cS}(Hx+Rx_r+Ju_\eta).
%\]
%This relaxed condition is only applicable in case of convex polytopes because we need a bound on the hausdorff distance between the sets.
%\end{rem}

\section{Error Feedback and Dynamic Compensator}\label{sec:dyn}

In this section, it will no longer be assumed that the states $x(\cdot)$ and $x_r(\cdot)$ are available for feedback, but only the regulation error $w(\cdot)$ is available to the controller.
Our approach is based on the {\em certainty equivalence principle} where we first design an estimator for the state variables $x(\cdot)$ and $x_r(\cdot)$, and then define the control law as a function of these estimates.

Toward this end, the estimator we propose is defined as follows:
\begin{subequations}\label{eq:obsDyn}
\begin{equation}\label{eq:obsDyna}
\begin{pmatrix} \dot{\hat x} \\ \dot{\hat x}_r \end{pmatrix} = \begin{bmatrix} \begin{pmatrix} A & F \\ 0 & A_r \end{pmatrix} - \begin{pmatrix} L_0 \\ L_1 \end{pmatrix} \begin{pmatrix} C & -C_r \end{pmatrix} \end{bmatrix} \begin{pmatrix} \hat x \\ \hat x_r \end{pmatrix} + \begin{pmatrix} L_0 \\ L_1 \end{pmatrix} w %\\
+ \begin{pmatrix} B \\ 0 \end{pmatrix} u + \begin{pmatrix} G & 0\\ 0& G_r \end{pmatrix} \begin{pmatrix} \hat \eta \\ \hat \eta_r \end{pmatrix}
\end{equation}
\begin{equation}\label{eq:obsDynb}
\begin{pmatrix} \hat \eta \\ \hat \eta_r \end{pmatrix} \in -\cN_{\cS(t) \times \cS(t)} \left(\widehat H \begin{pmatrix}\hat x \\ \hat x_r \end{pmatrix}
+\widehat J \begin{pmatrix} \hat \eta \\ \hat \eta_r \end{pmatrix}
\right),
\end{equation}
\end{subequations}
where the gain matrix $L:=\begin{pmatrix}L_0 \\ L_1\end{pmatrix} $ will be designed in the sequel, and the matrices $\widehat H$, $\widehat J$ are defined as follows:
\[
\widehat H := \begin{pmatrix} H & 0\\ 0& H_r \end{pmatrix}, \quad
\widehat J := \begin{pmatrix} J & 0\\ 0& J_r \end{pmatrix}.
\]
For brevity, we have suppressed the time argument in \eqref{eq:obsDyn}, and will do so in the remainder of this section unless required. Let us also introduce the following notation:
\begin{gather*}
\widehat A := \begin{pmatrix} A & F \\ 0 & A_r \end{pmatrix}, \quad
\widehat C:= \begin{pmatrix} C & -C_r \end{pmatrix}, \quad
\widehat G := \begin{pmatrix} G & 0\\ 0& G_r \end{pmatrix}.
\end{gather*}

\begin{thm}
Consider systems~\eqref{eq:plant}, \eqref{eq:sysRef} under assumptions \ref{ass:qual}, \ref{ass:minNorm} and \ref{ass:contS}, and the output regulation error \eqref{eq:refSig}.
Suppose that there exist a feedback matrix $K$ and an injection matrix $L$ that render the quadruples $(\widetilde A,\widetilde G,\widetilde H, \widetilde J)$ and $(\widehat A-L\widehat C,\widehat G, \widehat H,\widehat J)$ strictly passive, respectively.
If there exist matrices $\Pi \in \R^{n \times d_r }$ and $M \in \R^{d_u \times d_r}$ that satisfy \eqref{eq:condReg}, then the output regulation problem is solved by letting
\begin{equation}\label{eq:dynControl}
u(t) = K \hat x(t) + (M-K\Pi) \hat x_r(t).
\end{equation}
\end{thm}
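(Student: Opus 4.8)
The plan is to verify the three requirements of output regulation --- well-posedness, regulation, and closed-loop stability --- by grafting a certainty-equivalence cascade argument onto the passivity/monotonicity analysis already developed for Theorem~\ref{thm:static}. Throughout I would work with the estimation error $\tilde e := \bigl(x-\hat x,\, x_r-\hat x_r\bigr)$ and the regulation error $e:=x-\Pi x_r$, after substituting the control \eqref{eq:dynControl} into \eqref{eq:plant} and \eqref{eq:obsDyn}.

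For well-posedness I would regard the closed loop --- the plant \eqref{eq:plant} together with the estimator \eqref{eq:obsDyn}, with the assumed exosystem trajectory $x_r(\cdot)$ entering only as an absolutely continuous forcing $\bigl(Fx_r,\,-LC_rx_r\bigr)$ --- as a single EVI of the form \eqref{eq:sysDVI} in the stacked state $(x,\hat x,\hat x_r)$, with multiplier $(\eta,\hat\eta,\hat\eta_r)$ and set-valued map $\cS(t)\times\cS(t)\times\cS(t)$. The structural point that makes this tractable is that the two set-valued parts are driven by disjoint blocks of the state: $\eta$ enters only through $Hx+J\eta$ and $(\hat\eta,\hat\eta_r)$ only through $\widehat H(\hat x,\hat x_r)+\widehat J(\hat\eta,\hat\eta_r)$, so the aggregated matrices are block diagonal, $\mathcal G=\bdiag(G,\widehat G)$, $\mathcal H=\bdiag(H,\widehat H)$, $\mathcal J=\bdiag(J,\widehat J)$. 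Now strict passivity of $(\widetilde A,\widetilde G,\widetilde H,\widetilde J)$ forces (as in the proof of Theorem~\ref{thm:static}) strict passivity of the sub-quadruple $(A+BK,G,H,J)$, which by \cite{CamlIann14} produces a symmetric positive definite $P$ with $\ker(J+J^\top)\subseteq\ker(PG-H^\top)$; strict passivity of $(\widehat A-L\widehat C,\widehat G,\widehat H,\widehat J)$ likewise produces $\widehat P$ with $\ker(\widehat J+\widehat J^\top)\subseteq\ker(\widehat P\widehat G-\widehat H^\top)$. The block-diagonal certificate $P_c:=\bdiag(P,\widehat P)$ then satisfies \ref{ass:kerJ} for the aggregated data, while \ref{ass:regF} holds because the aggregated drift is affine in the state with absolutely continuous forcing, and \ref{ass:qual}, \ref{ass:minNorm}, \ref{ass:contS} transfer to the product $\cS\times\cS\times\cS$ since the relevant range, relative-interior, and Hausdorff-distance conditions all factor through Cartesian products. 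Theorem~\ref{thm:solMain} then yields a unique weak solution. This is the step I expect to be the main obstacle: unlike the static case the closed loop is a genuinely larger EVI, and producing one quadratic certificate for \ref{ass:kerJ} is only possible because the decoupling of the set-valued parts lets the two separate certificates $P$ and $\widehat P$ be assembled block-diagonally.

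For the estimation error I would subtract \eqref{eq:obsDyn} from the stacked plant/exosystem dynamics to obtain $\dot{\tilde e}=(\widehat A-L\widehat C)\tilde e+\widehat G\,\delta$, where $\delta:=(\eta-\hat\eta,\,\eta_r-\hat\eta_r)$. Because $-(\eta,\eta_r)$ and $-(\hat\eta,\hat\eta_r)$ lie in $\cN_{\cS\times\cS}$ evaluated at $\widehat H(x,x_r)+\widehat J(\eta,\eta_r)$ and at $\widehat H(\hat x,\hat x_r)+\widehat J(\hat\eta,\hat\eta_r)$ respectively, monotonicity of the normal cone gives $\langle\delta,\widehat H\tilde e+\widehat J\delta\rangle\le 0$. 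Taking $V_{\tilde e}:=\tilde e^\top\widehat P\tilde e$ and inserting the strict-passivity LMI of $(\widehat A-L\widehat C,\widehat G,\widehat H,\widehat J)$ exactly as in Theorem~\ref{thm:static} bounds the indefinite cross terms by $2\langle\delta,\widehat H\tilde e+\widehat J\delta\rangle\le 0$, leaving $\dot V_{\tilde e}\le-\hat\gamma V_{\tilde e}$. Thus $\tilde e(t)\to 0$ exponentially, and crucially this decay holds along every closed-loop trajectory, irrespective of $e$, which is what legitimizes the subsequent cascade argument.

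Finally, repeating the computation of Theorem~\ref{thm:static} with the estimate-based control \eqref{eq:dynControl} yields $\dot e=\widetilde A e+\widetilde G\widetilde\eta-B\begin{bmatrix}K & M-K\Pi\end{bmatrix}\tilde e$, i.e.\ the static closed-loop regulation dynamics perturbed by the vanishing term $-B\begin{bmatrix}K & M-K\Pi\end{bmatrix}\tilde e$. For $V_e:=e^\top Pe$, the passivity LMI of $(\widetilde A,\widetilde G,\widetilde H,\widetilde J)$ together with the monotonicity estimate of Theorem~\ref{thm:static} (using \eqref{eq:condRegc}, so that $He=Hx-H_rx_r$) gives $\dot V_e\le-\gamma V_e+c_0\,|e|\,|\tilde e|$ for some $c_0\ge 0$. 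I would then close with the composite Lyapunov function $V:=V_e+\kappa V_{\tilde e}$: absorbing the cross term by Young's inequality and taking $\kappa$ sufficiently large gives $\dot V\le-c_1V$ for some $c_1>0$, so $(e,\tilde e)\to 0$ globally exponentially. Hence $w=Ce\to 0$ (regulation), and for $x_r\equiv 0$ one has $e=x$ together with $\tilde e\to 0$, forcing $(x,\hat x,\hat x_r)\to 0$ (closed-loop stability), completing the proof.
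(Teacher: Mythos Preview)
Your proposal is correct and follows essentially the same approach as the paper: block-diagonal $P_c=\bdiag(P,\widehat P)$ for well-posedness via Theorem~\ref{thm:solMain}, and a weighted composite Lyapunov function $\alpha\,e^\top Pe+\beta\,\tilde e^\top\widehat P\tilde e$ (your $V_e+\kappa V_{\tilde e}$) with the cross term absorbed by Young's inequality after the monotonicity of the normal cone kills the set-valued contributions. The paper treats the two error blocks simultaneously rather than in your cascade order, but the estimates and the choice of weights are the same.
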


\begin{proof}
If we denote the state of the closed-loop system by $X:= \col (x, \hat x, \hat x_r)$, and let $\Lambda:= \col (\eta, \hat \eta, \hat \eta_r)$, then the control law \eqref{eq:dynControl} results in the following dynamics:
\begin{subequations}\label{eq:sysCl}
%& =\underbrace{\begin{pmatrix} A + BK & -BK & B(M-K\Pi) \\ 0 & A - L_0C & -F+L_0C_r \\ 0 & L_1C & A_r-L_1D \end{pmatrix}}_{:=A_{cl}}
%\underbrace{\begin{pmatrix} F \\ F-L_0D \\ L_1D \end{pmatrix}}_{:=F_{cl}} x_r +
%\underbrace{\begin{pmatrix}G & 0 & 0 \\ G & -G & 0 \\ 0 & 0 & G_r \end{pmatrix}}_{:=G_{cl}}
%\begin{pmatrix} x \\ \hat x \\ \hat x_r \end{pmatrix} 
%\begin{pmatrix}\eta \\ \hat \eta \\ \hat \eta_r \end{pmatrix}
\begin{gather}
\dot X = A_{cl} X + F_{cl} x_r + G_{cl} \Lambda \label{eq:sysCla} \\
\Lambda \in -\cN_{\cS(t) \times \cS(t) \times \cS(t)} \left( H_{cl} X + J_{cl} \Lambda \right), \label{eq:sysClb}
\end{gather}
\end{subequations}
where the matrices $A_{cl}$, $F_{cl}$ are obtained from \eqref{eq:plant}, \eqref{eq:sysRef}, \eqref{eq:obsDyn}, \eqref{eq:dynControl}, and 
\[
G_{cl}:=\begin{bmatrix} G & 0 \\ 0 & \widehat G \end{bmatrix}, \quad
H_{cl}:=\begin{bmatrix} H & 0 \\ 0 & \widehat H \end{bmatrix}, \quad
J_{cl}:=\begin{bmatrix} J & 0 \\ 0 & \widehat J \end{bmatrix}.
\]

{\em Well-posedness}: To show that the closed-loop system admits a unique solution, we follow the same procedure as in the proof of Theorem~\ref{thm:static}. That is, we just need to find a symmetric positive definite matrix $P_{cl}$ such that $ \ker (J_{cl}+J_{cl}^\top) \subseteq \ker (P_{cl}G_{cl} - H_{cl}^\top)$, as all other hypotheses of Theorem~\ref{thm:solMain} hold by construction. The matrix $J_{cl}$ is positive semidefinite by construction.
To find the matrix $P_{cl}$, it is noted that, by assumption, there exist symmetric positive definite matrices $P,\widehat{P}$ such that the LMI \eqref{eq:defPass} holds for the quadruples $(A+BK, G, H, J)$ and $(\widehat A-L\widehat C, \widehat G, \widehat H, \widehat J)$, respectively, for some dissipation constants $\gamma, \widehat \gamma$.
It thus holds that $\ker(J+J^\top) \subseteq \ker(PG-H^\top)$, and $\ker(\widehat J+\widehat J^\top) \subseteq \ker(\widehat P\widehat G - \widehat H^\top)$.
Now, simply choose
\[
P_{cl} := \begin{bmatrix} P & 0 \\ 0 & \widehat P \end{bmatrix},
\]
then it is easily checked that $P_{cl}$ is symmetric, positive definite, and $\ker (J_{cl}+J_{cl}^\top) \subseteq \ker(P_{cl} G_{cl} - H_{cl}^\top) $.

{\em Regulation:}
We are interested in asymptotic stability of the origin for the variable $e$, defined as
\begin{equation}
e := \begin{pmatrix} x -\Pi x_r \\ x - \hat x \\ x_r - \hat x_r \end{pmatrix} 
=: \begin{pmatrix} e_x \\ \tilde x \\ e_{r} \end{pmatrix}
=: \begin{pmatrix} e_x \\ e_\xi \end{pmatrix}.
\end{equation}
By introducing the matrix $W:=\begin{bmatrix}-K & (M-K\Pi)\end{bmatrix}$, it can be shown using the equations in \eqref{eq:condReg} that
\begin{equation}\label{eq:regDyn}
\dot e = \begin{bmatrix} A + BK & BW \\ 0 & \widehat A - L \widehat C \end{bmatrix} \begin{pmatrix} e_x \\ e_\xi \end{pmatrix} %\\
 + \begin{bmatrix} \widetilde G & 0 & 0 \\ 0 & G & 0 \\ 0 & 0 & G_r\end{bmatrix} \begin{pmatrix} \widetilde \eta \\ \eta - \hat \eta \\ \eta_r - \hat \eta_r \end{pmatrix}
\end{equation}
where we recall that $\widetilde G= [G~\Pi G]$, and $\widetilde \eta = \col (\eta, -\eta_r)$. Also, $\eta,\eta_r$ and $\col (\hat \eta, \hat \eta_r)$ are defined in \eqref{eq:plantb}, \eqref{eq:sysRefb} and \eqref{eq:obsDynb}, respectively.

Consider the Lyapunov function $V(e) = e^\top \begin{bmatrix} \alpha P & 0 \\ 0 & \beta \widehat P \end{bmatrix} e$ for some $\alpha, \beta > 0$ to be specified later.
The derivative of $V(\cdot)$ along the trajectories of the closed-loop system \eqref{eq:sysCl} satisfies the following relations:
\begin{align*}
\dot V(e) & = \alpha e_x^\top (P (A+BK) + (A+BK)^\top P) e_x  %\\ &\quad
  +2\alpha e_x^\top PBW e_\xi \\
& \quad + e_\xi^\top (\beta \widehat P (\widehat A - L \widehat C) + (\widehat A- L \widehat C)^\top \beta \widehat P)e_\xi %\\ &\quad
+ 2\alpha  e_x^\top P \widetilde G \widetilde \eta + 2\beta e_\xi^\top \widehat P \,\widehat G  
\begin{pmatrix} \eta - \hat \eta \\ \eta_r - \hat \eta_r \end{pmatrix} \\
& \le - \alpha \gamma e_x^\top P  e_x - \beta \widehat \gamma e_\xi^\top \widehat P e_\xi + 2\alpha e_x^\top PBW e_\xi % \\ & \quad
 +2 \alpha\widetilde\eta^\top(\widetilde H e_x+ \widetilde J \widetilde \eta) \\
& \quad +2 \beta (\eta-\hat \eta)^\top(H\tilde x+J(\eta-\hat \eta)) % \\ & \quad
 +2 \beta (\eta_r-\hat \eta_r)^\top(H_re_r+J_r(\eta_r-\hat \eta_r))
\end{align*}
where the last inequality is obtained using the passivity assumption on $(A+BK,G,H,J)$ and $(\widehat A-L\widehat C, \widehat G, \widehat H, \widehat J)$.
Recalling that $H\Pi = H_r$, it now follows that
\begin{align*}
\widetilde \eta^\top(\widetilde H e_x+\widetilde J\widetilde\eta) & = 
\langle \eta(t), H e(t) + \widetilde J \eta(t) - J_r \eta_r(t)\rangle - \langle \eta_r(t), H e(t) + J \eta(t) - J_r\eta_r(t)\rangle \\
& = \langle \eta(t), Hx(t) + J \eta(t) - (H_r x_r(t) + J_r \eta_r(t)) \rangle - \langle \eta_r(t), Hx(t) + J \eta(t) - (H_r x_r(t) + J_r \eta_r(t)) \rangle\\
& \le 0
\end{align*}
where the inequality follows due to the definition of the normal cone as $Hx(t) + J \eta(t) \in \cS(t)$, and $H_rx_r(t) + J_r \eta_r (t) \in \cS(t)$, for each $t \ge 0$, due to the adopted solution concept.
Using similar arguments,
\begin{align*}
& \quad (\eta-\hat \eta)^\top(H\tilde x+J(\eta-\hat \eta)) \\
& =  \langle \eta(t), Hx(t) + J \eta(t) - (H \hat x (t) + J \hat \eta(t)) \rangle - \langle \hat \eta(t), Hx(t) + J \eta(t) - (H \hat x(t) + J \hat \eta(t)) \rangle\\ 
& \le 0
\end{align*}
and
\begin{align*}
& \quad (\eta_r-\hat \eta_r)^\top(H_re_r+J_r(\eta_r-\hat \eta_r)) \\
& =  \langle \eta_r(t), H_rx_r(t) + J_r \eta_r(t) - (H_r \hat x_r (t) + J_r \hat \eta_r(t)) \rangle - \langle \hat \eta_r(t), H_rx_r(t) + J_r \eta_r(t) - (H_r \hat x_r(t) + J_r \hat \eta_r(t)) \rangle\\ 
& \le 0
\end{align*}
Plugging these relations in the expression for $\dot V$, and using the notation $\sigma_{\min}$ to denote the smallest eigenvalue of a matrix, and $\chi$ to denote the induced Euclidean matrix norm of $PBW$, we get
\begin{align*}
\dot V (e) & \le -\alpha \gamma \, \sigma_{\min}(P) \, |e_x|^2 -\beta \widehat{\gamma}\sigma_{\min}\, (\widehat P) \, |e_\xi|^2 %\\ & \qquad
 + 2 \alpha \chi \, |e_x| \, |e_\xi| \\
& \le  -\alpha \gamma\, \sigma_{\min}(P) \, |e_x|^2 \!\! -\beta \widehat{\gamma} \sigma_{\min}\, (\widehat P) \, |e_\xi|^2 \!\!+ |e_x|^2 % \\ & \qquad
 + \alpha^2 \chi^2 |e_\xi|^2 \\
&= - (\alpha\gamma\, \sigma_{\min}(P) - 1) |e_x|^2 % \\ & \qquad
- (\beta \widehat \gamma \, \sigma_{\min}(\widehat P) - \alpha^2\chi^2) |e_\xi|^2.
\end{align*}
Thus, choosing $\alpha, \beta$ in the definition of the Lyapunov function $V(\cdot)$ such that $\alpha\, \gamma \sigma_{\min}(P) > 1$ and $\beta \widehat{\gamma}\, \sigma_{\min}(\widehat P) > \alpha^2\chi^2)$, makes $\dot V$ negative definite.
In particular $e_x$ converges to zero, from which it follows that $w = Cx - C_r x_r = C (x - \Pi x_r) = Ce_x$ converges to zero.

{\em Closed-loop stabilization:} Asymptotic stability of the origin of the unforced closed-loop system also follows directly. Indeed, when $x_r \equiv 0$, $e(t) = \col(x(t), \tilde x(t), \hat x_r(t))$ converges to the origin asymptotically.
\end{proof}

\section{Applications}\label{sec:app}

Let us revisit the examples that were pointed out at the beginning of the article as a motivation for studying the problem of output regulation for the proposed class of systems.

\subsection{Viability of Convex Polytopes with Set-Valued Control}

In this section, we apply our theoretical results to formulate an output regulation problem with polyhedral constraints.
In particular, the plant to be controlled is now described by:
\begin{equation}
\dot x (t) = A x(t) + Bu(t) + Fx_r(t).
\end{equation}
and our aim is to find a control input $u$ such that the following two objectives are met:
\begin{obj}[leftmargin=3em]
\item \label{obj:lcs1} For a given closed convex polyhedral cone $\cK \subseteq \R^{d_s}$, and a locally absolutely continuous function $h(\cdot):[0,\infty) \rightarrow \R^{d_s}$, the state $x$ satisfies the constraint that $y(t):=H x(t) + h(t) \in \cK$, for all $t \ge 0$, where the matrix $H \in \R^{d_s \times n}$ satisfies the constraint \eqref{eq:consH}.
\item \label{obj:lcs2} the output regulation is achieved with respect to the variable $w = Cx - C_r x_r$ for a given exogenous signal $x_r$.
\end{obj}
Thus, in addition to the output regulation, we have a viability problem in Aubin's sense \cite[Chapter 11]{AubiBaye11} since the state of the plant is constrained to evolve within a predefined polyhedral set at all times.

In order to achieve these objectives, the exosystem generating the reference trajectories is expressed in the form of a linear complementarity system (LCS), that is, we consider:
\begin{subequations}\label{eq:refLCS}
\begin{gather}
\dot x_r(t) = A_r x_r(t) + G_r \eta_r \label{eq:refLCSa}\\
 -\cK^* \ni \eta_r \perp H_r x_r(t) + h(t) \in \cK, \label{eq:refLCSb}
\end{gather}
\end{subequations}
and hence the results of this section can also be interpreted in the context of trajectory tracking for a class of trajectories generated by LCS.

It will be assumed that system \eqref{eq:refLCS} is initialized such that it admits a solution and the resulting state trajectory $x_r$ is locally absolutely continuous. By letting, 
\begin{equation}\label{eq:defS}
\cS(t):= \{z\in \R^{d_s} \,| \, z + h(t) \ge 0\},
\end{equation}
system \eqref{eq:refLCS} is equivalently written as:
\begin{gather*}
\dot x_r(t) = A_r x_r(t) + G_r \eta_r \\
\eta_r \in - \cN_{\cS(t)}(H_r x_r(t)).
\end{gather*}

For the plant under consideration, we split the control input $u$ as $u:= u_{\reg} + u_\eta$ with the motivation that $u_{\reg}$ refers to the classical internal model control input of the form \eqref{eq:staticControl} and $u_\eta$ would play the role of normal vector required to maintain the constraint (a viability controller).
Thus, the following control system is realized:
\begin{subequations}\label{eq:contLCS}
\begin{gather}
\dot x (t) = A x(t) + Bu_{\reg}(t) + Bu_\eta(t) + Fx_r(t), \\
u_\eta(t) \in -\cN_{\cS(t)}(Hx(t))\label{eq:contLCSb}\\
u_{\reg}(t) = K x + (M-K\Pi) x_r(t)
\end{gather}
\end{subequations}
where $M$ and $\Pi$ are obtained as solutions to \eqref{eq:condReg} by taking $G=B$, and $K$ is chosen such that $(A+BK, B, H)$ is strictly passive.
The following result then readily follows from Corollary~\ref{cor:solLCCS} and Theorem~\ref{thm:static}.
\begin{cor}\label{cor:viab}
The system~\eqref{eq:contLCS} possesses a unique solution and the resulting state trajectory $x$ satisfies the properties \ref{obj:lcs1} and \ref{obj:lcs2}.
\end{cor}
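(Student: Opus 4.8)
The plan is to recognize the closed-loop system~\eqref{eq:contLCS} as a special case of the two frameworks already analysed, and then invoke Corollary~\ref{cor:solLCCS} for well-posedness and Theorem~\ref{thm:static} for regulation. Concretely, substituting $u_{\reg}$ into the plant turns \eqref{eq:contLCS} into
\[
\dot x = (A+BK)x + (F + BM - BK\Pi)x_r + B u_\eta, \qquad u_\eta \in -\cN_{\cS(t)}(Hx),
\]
which is exactly the form \eqref{eq:plant} under the identifications $G = B$, $J = 0$, $\eta = u_\eta$; the exosystem \eqref{eq:refLCS} is likewise an instance of \eqref{eq:sysRef} with $J_r = 0$. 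Since the set $\cS(t) = \{z : z + h(t)\ge 0\}$ in \eqref{eq:defS} is the shifted orthant $\cK - h(t)$ with $\cK = \R_+^{d_s}$, the whole system is a cone complementarity system of the type \eqref{eq:sysForceDVI}.

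First I would establish well-posedness via Corollary~\ref{cor:solLCCS}. The vector field is affine in $x$, so \ref{ass:regF} holds; for \ref{ass:kerJ} with $J = 0$ one only needs a symmetric positive definite $P$ with $PB = H^\top$, and this is precisely what the assumed strict passivity of $(A+BK, B, H)$ delivers, since the zero feedthrough block in \eqref{eq:defPass} forces the off-diagonal term $PG - H^\top$ to vanish. With $J = 0$ the cone conditions of Corollary~\ref{cor:solLCCS} collapse: \eqref{eq:inclJH} and \eqref{eq:minNormCone} are trivial, \eqref{eq:condRintCone} reduces to an equality because $\rge\partial\sigma_{\cK - h(t)} = \cK - h(t)$, and \eqref{eq:consH} is guaranteed by the hypothesis imposed in \ref{obj:lcs1}. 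As $h$ is locally absolutely continuous, this yields a unique continuous solution $x$. Objective \ref{obj:lcs1} is then immediate from the solution concept: the inclusion $u_\eta \in -\cN_{\cS(t)}(Hx)$ can hold only where the normal cone is nonempty, i.e.\ where $Hx(t) \in \cS(t)$, which is exactly $y(t) = Hx(t)+h(t) \in \cK$ for every $t \ge 0$.

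Next I would obtain \ref{obj:lcs2} by applying Theorem~\ref{thm:static} to the pair \eqref{eq:contLCS}, \eqref{eq:refLCS}. The regulator equations \eqref{eq:condReg} hold by construction (with $G = B$), and \ref{ass:qual}, \ref{ass:minNorm}, \ref{ass:contS} have already been verified above. The remaining, and decisive, requirement is strict passivity of the augmented quadruple $(\widetilde A, \widetilde G, \widetilde H, \widetilde J)$ with $\widetilde A = A + BK$, $\widetilde G = \begin{bmatrix} B & \Pi G_r\end{bmatrix}$, $\widetilde H = \begin{bmatrix} H \\ H\end{bmatrix}$ and $\widetilde J = 0$. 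Because $\widetilde J = 0$, the LMI \eqref{eq:defPass} can hold only if its off-diagonal block vanishes, i.e.\ $P\widetilde G = \widetilde H^\top$; this is equivalent to the two identities $PB = H^\top$ and $P\Pi G_r = H^\top$, together with the $P$-dissipativity $(A+BK)^\top P + P(A+BK) + \gamma P \le 0$. The first identity and the dissipativity are supplied by the passivity of $(A+BK, B, H)$, so everything reduces to the cross-coupling condition $P\Pi G_r = H^\top$.

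I expect this cross-coupling condition to be the main obstacle: it ties the exosystem input map $G_r$, transported into the plant coordinates through $\Pi$, to the shared constraint channel $H$ by way of the same storage matrix $P$. Once it is in force, the Lyapunov computation of Theorem~\ref{thm:static} applies verbatim: the monotonicity of $\cN_{\cS(t)}$, evaluated at the common feasible points $Hx$ and $H_r x_r = H\Pi x_r$, renders the set-valued contribution to $\dot V$ nonpositive, giving $\dot V(e) < 0$ for $e \neq 0$ and hence $e_x = x - \Pi x_r \to 0$ and $w = C e_x \to 0$. Collecting the three ingredients, namely the unique continuous solution, \ref{obj:lcs1}, and $w \to 0$, proves the corollary.
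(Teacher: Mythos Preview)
Your approach is exactly the one the paper intends: it states only that the result ``readily follows from Corollary~\ref{cor:solLCCS} and Theorem~\ref{thm:static}'' without giving details, and your reduction of \eqref{eq:contLCS} to the cone-complementarity format with $G=B$, $J=0$ and verification of the hypotheses of Corollary~\ref{cor:solLCCS} is precisely what is needed for well-posedness and \ref{obj:lcs1}.

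Your treatment of \ref{obj:lcs2} is in fact more careful than the paper's. You correctly observe that, because $\widetilde J=0$, strict passivity of $(\widetilde A,\widetilde G,\widetilde H,\widetilde J)$ forces $P\widetilde G=\widetilde H^\top$, i.e.\ both $PB=H^\top$ \emph{and} $P\Pi G_r=H^\top$. The paper's stated hypothesis---that $K$ renders $(A+BK,B,H)$ strictly passive---guarantees only the first of these together with the dissipativity of $A+BK$; the cross-coupling $P\Pi G_r=H^\top$ is an additional structural requirement linking the exosystem's input map to the plant's constraint channel, and it does not follow automatically. You are right to flag this as the essential obstacle: without it the $\eta_r$-term in $\dot V$ cannot be handled via monotonicity, and Theorem~\ref{thm:static} does not apply as written. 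So your proof is correct \emph{conditionally} on that coupling, and in surfacing it you have identified a hypothesis that the paper's setup leaves implicit.
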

%From the preceding development, it is readily checked that the hypotheses of Theorem~\ref{thm:static} indeed hold true and thus the control input $u = u_{\reg} + u_\eta$ indeed solves the regulation and the viability problem.

It may be the case that equation~\eqref{eq:contLCSb} is not easily computable for arbitrary closed convex cone $\cK$. For the case, when $\cK = \R_+^{d_s}$, the positive orthant, we replace~\eqref{eq:contLCSb} with an alternate expression in the form of complementarity relation for which there exist efficient solvers.
In that case, it may be verified that
\begin{align*}
& \phantom{\Leftrightarrow} \quad u_\eta(t) \in -\cN(\cS(t); Hx(t)) \\
& \Leftrightarrow 0\le u_\eta(t) \perp y(t) = Hx(t)+ h(t)\ge 0 \\
&\Leftrightarrow
\begin{cases}
u_\eta(t) = 0 & \text {if } Hx(t) \in \text{int } \cS(t) \\
0 \le u_\eta (t) \perp \dot y(t)\ge 0  & \text {if } Hx(t) \in  \text{bd } \cS(t)
\end{cases}
\end{align*}
where $\dot y = HAx + HBu_{\reg} + HBu_\eta+HFx_r + \dot h$ is considered to be known. If the matrix $HB$ is positive definite, then there exists a unique solution to the aforementioned LCP. Existence (without uniqueness) is also guaranteed under certain conditions, see several results in \cite[Section~5.4.2]{Brog16} and \cite{CottPang92} on the solvability of LCP, in particular for the case when $HB$ is positive semidefinite.
%In solving the last expression, it has to be assumed that the matrix $HB$ is positive semidefinite.

%{\color{red}\bf Bernard! could you double check the equivalencies in the last expression. The first equivalence follows naturally from the preceding development and the last equivalence is motivated from our discussions in the last meeting.
%
%Although, I do see the intuition behind these equations, could you provide a standard reference for those. Please be specific, I may not have access to books on convex analysis.
%
%Also, could you please comment on what is the difference between the complementarity relation in the second expression and the complementarity relation in the third expression? Is it an implementation issue that the one is easier to implement than the other?
%}

\begin{exam}
As an illustration of Corollary~\ref{cor:viab}, let us consider a second order LTI system described by the equations
\[
\dot x_1 = -0.1 x_1+ x_2; \quad \dot x_2= u.
\]
The exosystem is defined as the following linear complementarity system:
\begin{gather}
\dot x_r := \begin{pmatrix} \dot x_{r1} \\ \dot x_{r2} \end{pmatrix} = \begin{bmatrix} -0.1 & 1\\-2 & 1 \end{bmatrix} x_r + \begin{bmatrix} 0 & 0 \\ -1 & 1\end{bmatrix} \eta_r \notag\\
0 \le \eta_r \bot \begin{pmatrix} -x_{r2} \\ x_{r2} \end{pmatrix} + \begin{pmatrix} 1 \\ 1 \end{pmatrix} \ge 0. \label{eq:compEx}
\end{gather}
Consider the set $\cS:=\{z \in \R \,\vert \, z+1 \ge 0\}$ and the matrix $H:=[{0\atop 0}{-1\atop 1}]$, the relation \eqref{eq:compEx} is equivalently expressed as $\eta_r \in -\cN_{\cS\times \cS} (Hx_r)$.
We are interested in designing a control input $u$, such that $\lim_{t \rightarrow \infty} |x_2(t) - x_{r2}(t)| = 0$, and $\forall \, t \ge 0$, $|x_2(t)| \le 1$, or equivalently $Hx(t) \in \cS \times \cS$.
Verbally speaking, the exosystem has been chosen so that the plot of $x_{r2}$ (versus time) resembles a sine wave clipped at the value 1, see Fig.~\ref{fig:clipSin}.
The control objective is to guarantee $|x_2(t)| \le 1$ and that $x_2$ converges asymptotically to $x_{r2}$.
Decomposing the input as $u:= u_\reg+u_\eta$ results in the closed-loop system of the form~\eqref{eq:contLCS}.
In the notation of Theorem~\ref{thm:static}, we let $\Pi = I_{2\times 2}$, $P= [{2 \atop 0}{0 \atop 1}]$, $K = [-2~-2]$, and $M= [-2~1]$, so that
%\[
$u_\reg(t):= -2x_1(t) -2x_2(t) +3 x_{r2}(t)$
%\]
follows from~\eqref{eq:staticControl}.
The discontinuous component of the input\footnote{We are implicitly using the fact that the two constraints imposed in this problem, $x_2 \le 1$ and $x_2 \ge -1$, are not active simultaneously.
Thus, the complementarity formulation~\eqref{eq:compClipSin} ensures that $u_{\eta} = -u_{\eta1}$ if $x_2 = 1$, and $u_\eta=u_{\eta2}$ if  $x_2 = -1$, otherwise $u_\eta = 0$.}
%Thus, at any given time either $u_{\eta 1}$ is nonzero, or $u_{\eta 2}$ is nonzero, so that $u_{\eta} = -u_{\eta1}$ if $x_2 = 1$, and $u_\eta=u_{\eta2}$ if  $x_2 = -1$.}
$u_\eta := -u_{\eta 1} + u_{\eta 2}$ is obtained as a solution of the following linear complementarity problems
\begin{equation}\label{eq:compClipSin}
\begin{aligned}
0 \le u_{\eta 1} \perp - x_2 + 1 \ge 0 \quad \Longleftrightarrow
\begin{cases}
u_{\eta1}(t) = 0 & \text{if } x_2(t) < 1\\
0 \le u_{\eta 1}(t) \perp -u_{\reg}(t) + u_{\eta 1}(t) \ge 0 & \text{if } x_2(t) = 1
\end{cases} \\
0 \le u_{\eta 2} \perp x_2 + 1 \ge 0 \quad \Longleftrightarrow
\begin{cases}
u_{\eta2}(t) = 0 & \text{if } x_2(t) > -1\\
0 \le u_{\eta 2}(t) \perp u_{\reg}(t) + u_{\eta 2}(t) \ge 0 & \text{if } x_2(t) = -1
\end{cases}
\end{aligned}
%0 \le \begin{pmatrix} u_{\eta1} \\ u_{\eta2} \end{pmatrix} \bot \begin{pmatrix} -x_2 \\ x_2 \end{pmatrix} + \begin{pmatrix} 1 \\ 1 \end{pmatrix} \ge 0, 
\end{equation}
which is always guaranteed to exist~\cite{CottPang92}.
The simulations for this example were carried out using the {\sc siconos} platform\footnote{Please visit {http://siconos.gforge.inria.fr/4.1.0/html/index.html} for details.} developed at INRIA.
The results of the simulation are shown in Fig.~\ref{fig:clipSin}.
\begin{figure}
\centering
\includegraphics[width=0.75\textwidth]{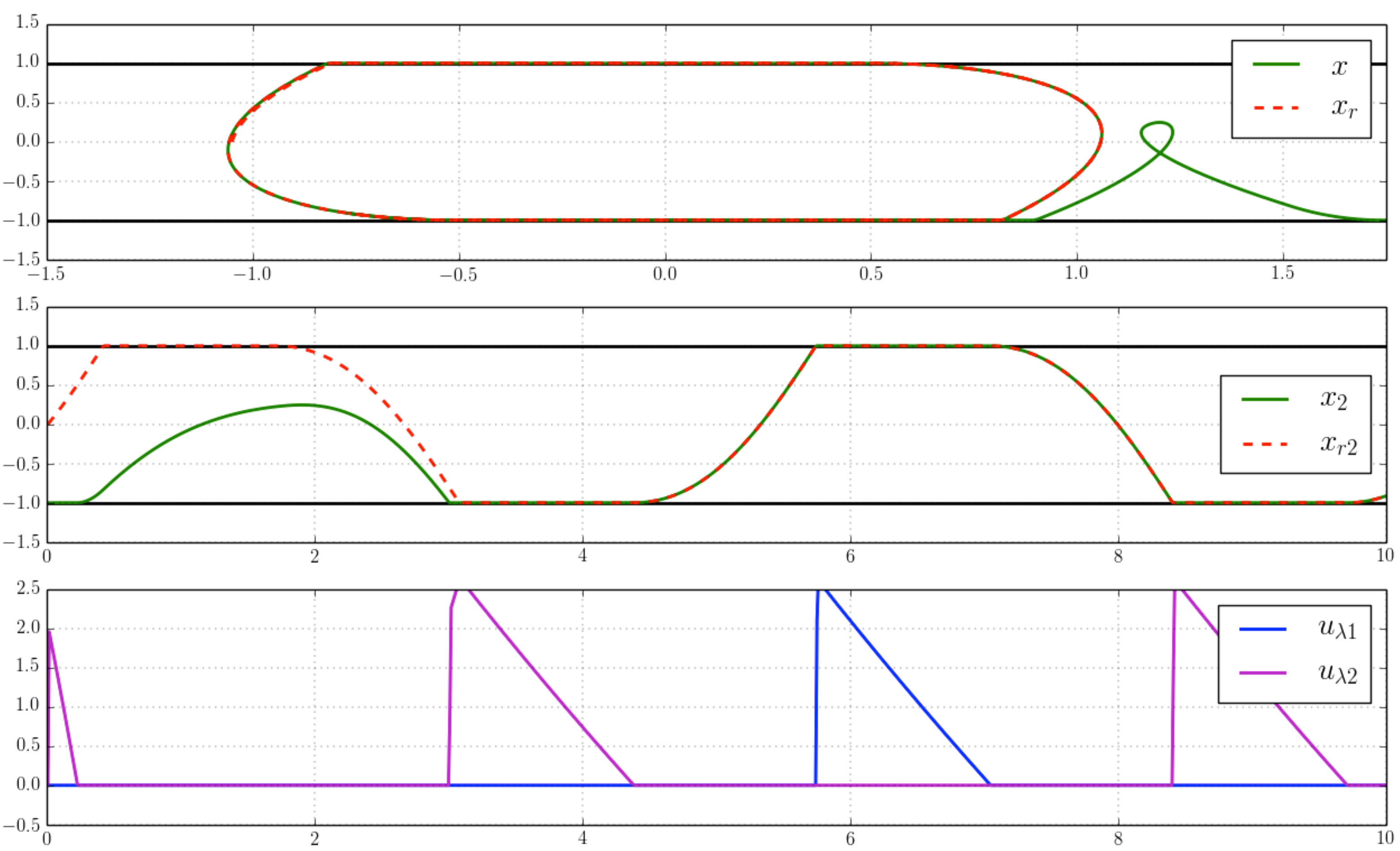}
\caption{The top plot shows the phase portrait of the trajectories of the plant and the exosystem. The middle plot confirms that $x_2$ converges to $x_{r2}$ while staying within the set $\cS$. The bottom plot shows the values of discontinuous component of the control input $u_\eta$, which only become nonzero when $x$ is on the boundary of the set $\cS$.}
\vskip -1em
\label{fig:clipSin}
\end{figure}
\end{exam}

\subsection{Regulation in Power Converters}

The framework of EVIs is extremely useful for modeling electrical circuits with switches, diodes, transistors, etc., which often induce discontinuities in state variables of the systems during their mode of operation.
In particular, it provides a very efficient framework for the numerical simulation \cite[Chapters 7,8]{AcarBonn11}.
Designing state feedback algorithms such that certain variables of the system follow a prescribed trajectory is seen as a challenging task due to presence of discontinuities in the system description. The results of this paper allow us to solve this problem for certain class of circuits and here we discuss the direct application of these results. It remains to be seen to what extent these results can be generalized to explore further similar applications in the domain of power converters, especially, regulations problems which involve the design of state-dependent switching laws.

The class of electrical circuits we consider here is assumed to be modeled by the following equations \cite{VascCaml09}:
\begin{subequations}\label{eq:sysElecApp}
\begin{gather}
\dot x(t) = A x(t) + F x_r(t) + G \eta(t) +  Bu(t) + B_{\text{ext}} f_{\text{ext}}(t) \label{eq:sysElecAppa} \\
v(t) = Hx(t) + J \eta(t) + h(t) \label{eq:sysElecAppb}\\
\cK \ni v(t) \perp \eta(t) \in \cK^*, \label{eq:sysElecAppc}
\end{gather}
\end{subequations}
where $u$ is a control input, $f_{\text{ext}}$ is a locally essentially bounded measurable function, and $h$ is a locally BV function. The reason for including this extra forcing term is that, very often, the power converters are driven by a voltage or current source which must be taken into account. We limit ourselves to the case where $\cK$ is a closed polyhedral convex cone, and $\cK^*$ denotes the dual of $\cK$.
The class of reference signals that we consider is described as:
\begin{subequations}\label{eq:sysRefElec}
\begin{gather}
\dot x_r(t) = A_r x_r(t) + G_r \eta_r(t) + B_r f_{\text{ext}}(t) \\
v_r(t) = H_rx_r(t) + J_r\eta_r(t) + h_r(t) \\
\cK \ni v_r(t) \perp \eta_r(t) \in \cK^*.
\end{gather}
\end{subequations}

\begin{cor}
Assume that there exist matrices $\Pi \in \R^{n \times d_r}$ and $M \in \R^{d_u \times d_r}$ such that the regulator conditions \eqref{eq:condReg} hold, and there is a feedback matrix $K$ which renders the quadruple $(\widetilde A, \widetilde G, \widetilde H, \widetilde J)$ strictly passive.
Furthermore, assume that there exists a matrix $N$ such that
\begin{equation}\label{eq:condfext}
B N + B_{\text{\em ext}} =  \Pi B_r
\end{equation}
and $h = h_r$, then the control input
\[
u(t) = K x(t) + (M-K\Pi) x_r(t) + Nf_{\text{\em ext}}(t)
\]
solves the output regulation problem.
\end{cor}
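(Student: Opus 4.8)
The plan is to follow the two-part argument of Theorem~\ref{thm:static}, treating $f_{\text{ext}}$ as a known exogenous input and verifying that the two new hypotheses---the matching condition \eqref{eq:condfext} and the equality $h=h_r$---are exactly what is needed to reduce everything to the already-settled case.

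I would first establish \emph{well-posedness}. Substituting the proposed $u$ into \eqref{eq:sysElecAppa} gives a closed-loop equation $\dot x=(A+BK)x+r(t)+G\eta$ with the known forcing $r(t):=(F+BM-BK\Pi)x_r(t)+(BN+B_{\text{ext}})f_{\text{ext}}(t)$. Because $x_r$ is a solution of the exosystem and $f_{\text{ext}}$ is locally essentially bounded, the map $(t,x)\mapsto(A+BK)x+r(t)$ is globally Lipschitz in $x$ and Lebesgue integrable in $t$, so \ref{ass:regF} holds; exactly as in Theorem~\ref{thm:static}, strict passivity of $(\widetilde A,\widetilde G,\widetilde H,\widetilde J)$ forces $(A+BK,G,H,J)$ to be strictly passive, giving \ref{ass:kerJ}. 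The closed loop is then a cone complementarity system of the form \eqref{eq:sysForceDVI}, so with the structural conditions of Corollary~\ref{cor:solLCCS} on $\cK$, $J$, $H$ in force, Corollary~\ref{cor:solLCCS} furnishes a unique weak solution (continuous if $h$ is absolutely continuous, right-continuous BV otherwise).

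Next I would prove \emph{regulation} with $e:=x-\Pi x_r$. On differentiating, the $f_{\text{ext}}$ contribution to $\dot e$ is $(BN+B_{\text{ext}})f_{\text{ext}}-\Pi B_r f_{\text{ext}}$, which vanishes precisely by \eqref{eq:condfext}; the remaining terms collapse, through \eqref{eq:condRega}, to the same dynamics $\dot e=(A+BK)e+\widetilde G\widetilde\eta$ found in Theorem~\ref{thm:static}, where $\widetilde\eta=(\eta^\top,-\eta_r^\top)^\top$. With $V(e)=e^\top P e$ and $P$ from the passivity LMI, one gets $\dot V\le-\gamma\,e^\top P e+2\,\widetilde\eta^\top(\widetilde H e+\widetilde J\widetilde\eta)$, and since $He=Hx-H_r x_r$ the cross term equals $\langle \eta-\eta_r,\ (Hx+J\eta)-(H_r x_r+J_r\eta_r)\rangle$. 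The condition $h=h_r$ enters here: it makes $(Hx+J\eta)-(H_r x_r+J_r\eta_r)=v-v_r$ with both $v,v_r\in\cK$, so monotonicity of $\cN_{\cK}$ applied to $-\eta\in\cN_{\cK}(v)$ and $-\eta_r\in\cN_{\cK}(v_r)$ makes this inner product $\le 0$. Hence $\dot V<0$ off the origin, so $e\to 0$ and therefore $w=Ce\to 0$; setting $x_r\equiv 0$ gives closed-loop stability.

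The substitution, the regulator-equation cancellations, and the Lyapunov estimate are routine and identical to Theorem~\ref{thm:static}; the only genuinely new points, which I would check most carefully, are the two matching conditions. Condition \eqref{eq:condfext} is exactly the demand that $f_{\text{ext}}$ be annihilated in the error coordinate, and $h=h_r$ is exactly what lets the plant and exosystem complementarity relations share the same translated cone so that a single monotonicity inequality for $\cN_{\cK}$ applies. If $h$ is only BV, the extra step---as in the remark after Theorem~\ref{thm:static}---is to confirm that $V$ does not increase across state jumps, using $e^+-e^-=\widetilde G\widetilde\eta$ together with $\widetilde\eta^\top\widetilde G^\top P\widetilde G\widetilde\eta\ge 0$.
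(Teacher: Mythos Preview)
Your proposal is correct and follows essentially the same approach as the paper: the paper's own argument is a one-sentence sketch stating that \eqref{eq:condfext} eliminates $f_{\text{ext}}$ from the error dynamics and that the remainder proceeds exactly as in Theorem~\ref{thm:static}. You have in fact spelled out more than the paper does---in particular, you make explicit the role of $h=h_r$ in ensuring that $v$ and $v_r$ lie in the same cone $\cK$ so that monotonicity of $\cN_{\cK}$ applies, a point the paper leaves implicit.
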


The additional condition \eqref{eq:condfext} and term $Nf_{\text{ext}}(t)$ in the control input allow us eliminate the exogenous signal $f_{\text{ext}}$ in the error dynamics. The stability of the error dynamics is then analyzed using the same arguments as given in the proof of Theorem~\ref{thm:static}.

\begin{exam}

%\begin{figure}
%\begin{centering}
%\begin{tikzpicture}[circuit ee IEC,x=3cm,y=2cm,semithick,
%every info/.style={font=\footnotesize},
%small circuit symbols,
%set resistor graphic=var resistor IEC graphic,
%set diode graphic=var diode IEC graphic,
%set make contact graphic= var make contact IEC graphic]
%% Let us start with some contacts:
%\foreach \contact/\y in {1/1,2/2.5,3/4}
%{
%\node [contact] (down contact \contact) at (\y,0) {};
%\node [contact] (up contact \contact) at (\y,1.5) {};
%}
%\draw (down contact 3) to [current direction={near start, info'={[blue]$i_2$}}] (down contact 2);
%
%\draw (down contact 1) to [voltage source={info'={[blue]$f_{\text{ext}}$}}] (down contact 2);
%
%\draw (up contact 1) to [capacitor={info'={[blue]$C$}}] (down contact 1);
%
%\draw (up contact 2) to [resistor={info'={[blue]$R$}}](down contact 2);
%
%\draw (up contact 3) to [inductor={info'={[blue]$L$}}] (down contact 3);
%%[make contact={info'={[red]$S_1$}}]
%
%\draw (up contact 1) to [current direction={near start, info'={[blue]$i_1$}}] (up contact 2);
%\draw (up contact 2) to [diode={info'={[blue]$D$}}] (up contact 1);
%
%\draw (up contact 2) to [voltage source={info'={[blue]$u$}}] (up contact 3);
%
%%\draw (up contact 3) to [inductor={info'={[blue]$L_2$}}, current direction={near end, info={[blue]$i_{L_2}$}}]
%%(up contact 4);
%
%%\draw (up contact 4) to [resistor={near start, info'={[blue]$R$}}, voltage source={near end, info'={[blue]$u_E$}}]
%%(down contact 4);
%
%\end{tikzpicture}
%\caption{A circuit modeled as evolution variational inequality}
%\label{fig:exCircuit}
%\end{centering}
%\end{figure}

Consider the circuit given in Figure~\ref{fig:exCircuit} in which the voltage-current characteristic of a diode is most appropriately modeled using an EVI. We consider the state variables of the system to be $x_1(t) := \int_0^ti_1(s) \, ds$, and $x_2(t) := i_2(t)$. We denote by $\eta$ the voltage drop across the diode. The voltage source $f_{\text{ext}}$ is a priori fixed as a locally BV function of time. The dynamics of the circuit are governed by the following equations:
\begin{gather*}
\dot x(t) = \begin{bmatrix} \frac{-1}{RC} & -1\\ \frac{1}{LC} & 0 \end{bmatrix} x(t) + \begin{bmatrix} \frac{1}{R} \\ 0 \end{bmatrix} \eta(t) + \begin{bmatrix} 0 \\ \frac{-1}{L}\end{bmatrix} u(t) + \begin{bmatrix} \frac{1}{R} \\ 0\end{bmatrix} f_{\text{ext}}(t) \\
0 \le \eta(t) \perp \begin{bmatrix} \frac{-1}{RC} & 0\end{bmatrix} x(t) + \frac{1}{R} \eta(t) +\frac{1}{R} f_{\text{ext}}(t)\ge 0,
\end{gather*}
where the complementarity conditions represent the voltage-current characteristic of a diode \cite{AcarBonn11}.
The control input $u$ is to be designed via state feedback such that the voltage across the capacitor $x_1(t)$ has the same value as $f_{\text{ext}}(t)$.
To do so, we define the reference system as follows:
\begin{gather*}
\dot x_r(t) = a_r x_r(t) + g_r \eta_r(t) + b_r f_{\text{ext}}(t) \\
0 \le \eta_r(t) \perp a_r x_r(t) + g_r \eta_r(t) + b_r f_{\text{ext}}(t) \ge 0,
\end{gather*}
where we take $a_r = -\frac{1}{RC}$, $g_r = \frac{1}{R}$, $b_r = \frac{1}{R}$.

To solve the regulation problem, we take $\Pi = \begin{bmatrix} 1 \\ 0\end{bmatrix}$, and the resulting control input is
\[
u(t) = k_1 x_1(t) + k_2 x_2(t) + \frac{1}{C} x_r(t) -k_1x_r(t).
\]
where we let $K:=[k_1~k_2]$, and the constants $k_1,k_2$ need to be computed.
To carry the computations, we chose $L=1\text{mH}$, $C = 10\text{mF}$, $R = 10 \, \Omega$.
The LMI solvers in MATLAB were used to find a positive definite matrix $P$ and the gain vector $K$ such that the criteria of Theorem~1 holds.
One possible choice of $P$ and $K$ obtained from these solvers is
\[
P = \begin{bmatrix}2240.9 & -4.4029 \\ -4.4029 & 0.0137 \end{bmatrix} , \quad K = \begin{bmatrix} -1000 & 5\end{bmatrix}.
\]
We then used the solvers programmed in the INRIA {\sc siconos} platform to simulate the resulting system of differential equations and the variational inequalities. For the simulations, we chose $f_{\text{ext}}(t) = \lfloor 10 \cdot t\rfloor$, which results in the plots reported in Figure~\ref{fig:simuCirc}. Note that the exosystem in this case only allows us to generate $x_r$ which is nondecreasing. One can consider other exosystems which generate larger class of reference signals, but this may require adding more elements in the circuit.

\begin{figure}
\centering
\begin{subfigure}[b]{0.49\linewidth}\centering
\begin{circuitikz}[scale=1, cute inductors]
\ctikzset{voltage/distance from line=0.25}
\ctikzset{bipoles/resistor/height=0.25}
\ctikzset{bipoles/diode/height=0.3}
\ctikzset{bipoles/diode/width=0.3}
\foreach \contact/\y in {1/1,2/5,3/8}
{
\node [contact] (down contact \contact) at (\y,0) {};
\node [contact] (up contact \contact) at (\y,2.5) {};
}

\draw (down contact 3) -- (down contact 2);

\draw (down contact 2) to [voltage source, v^={$f_{\text{ext}}$}] (down contact 1);

\draw (down contact 1) [Do, v_>={$\eta\,$}] to (up contact 1);

\draw (up contact 2) to [C=$C$, i^>=$i_1$, v_<={$v_C$\, }] (down contact 2);

\draw (down contact 3) [L=$L$, i<={$i_2$}] to  (up contact 3);
\draw (up contact 2) to  [R, i_<=$i_1+i_2$, l_=$R$,-*] (up contact 1);
*
\draw (up contact 3) to [voltage source, v_={$u$},-*] (up contact 2);
\end{circuitikz}
\caption{A circuit with diode and external inputs.}
\label{fig:exCircuit}
\end{subfigure}
\begin{subfigure}[b]{0.49\linewidth}\centering
\includegraphics[width=0.95\textwidth]{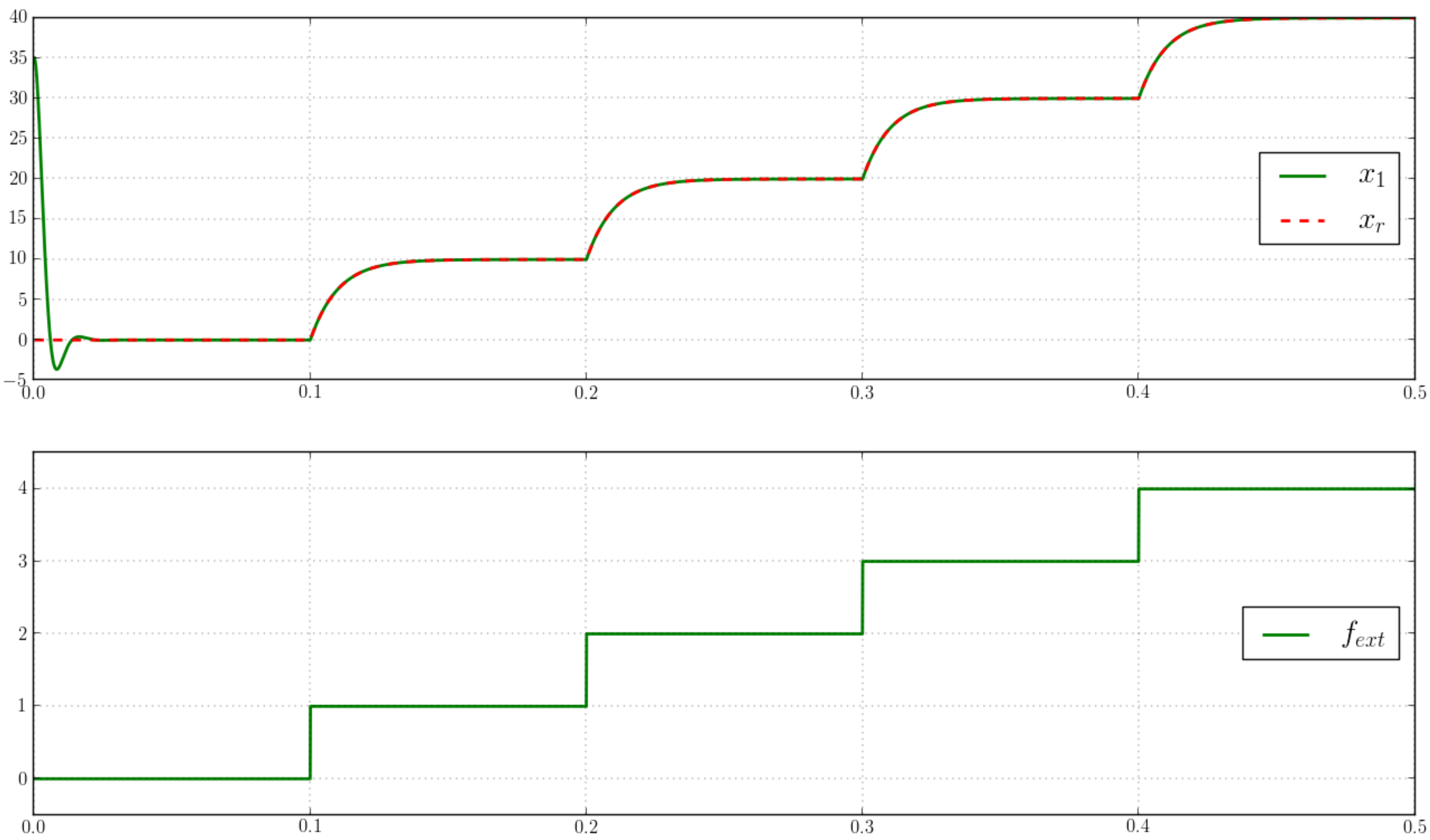}
\caption{Plots showing $f_{\text{ext}}$ and $x_1$ tracking $x_r$.}
\label{fig:simuCirc}
\end{subfigure}
\caption{Regulation of discontinuous function in circuits modeled as EVIs.}
\end{figure}

\end{exam}

\section{Conclusions}\label{sec:conc}
This paper studied the problem of output regulation in a certain class of nonsmooth dynamical systems that are modeled as evolution variational inequalities.
These nonsmooth systems in particular model systems where states are constrained to evolve within some closed, convex and time-varying set.
We first studied the conditions under which there exists a unique solution for such systems.
The classical internal model principle was then used to derive conditions to synthesize a control law that achieves the desired objective.
The analysis were based on using the Lyapunov methods in combination with monotonicity property of the normal cone operator to prove stability.

One could think of extending the results to consider the case when the set-valued mapping $\cS(\cdot)$ is not just time-dependent, but also state-dependent.
This would allow us to model mechanical systems with impacts within our framework but deeper investigation is required for the synthesis of control in such cases.

%Also, when considering the regulation problem in the context of LCS, the complementarity relation \eqref{eq:refLCSb} could be replaced with a more general one involving the term with $\eta_r$ on right-hand side of the equation.
%Another generality would come from considering constraints where $y(t):=Hx + g(t)$ not necessarily belongs to the positive orthant but a closed convex cone, and the complementarity variable $\eta$ belongs to the dual of this cone. This more general class of LCS allows for modeling of power converters with switching devices (see \cite{VascCaml09}) and the approach adopted in this paper could provide useful methodology for designing switching laws to regulate the output of such systems.

%\section*{Acknowledgements}
%The first author would like to thank Kanat Camlibel from University of Groningen for several intriguing discussions related to the results presented in this paper. Compared to the conference version, certain details in the article are a result of those conversations.

\appendix

%\section{Proofs of Lemmas Used in Derivations}\label{app:pfLemmas}

\section{Useful Results from Literature}\label{app:litLemmas}

\begin{thm}[{\cite[Theorem~3]{KunzMont97}}]\label{thm:solMaxMonTV}
Consider the differential inclusion
\begin{equation}\label{eq:sysGenLit}
\dot z(t) \in -\cM(t,z(t)), \quad z(0) \in \dom \cM(0,\cdot),
\end{equation}
where $\cM(t,\cdot):\cH \rightrightarrows \cH$ is a maximal monotone operator for each $t \in [0,T]$. Assume that the following two conditions hold:
\begin{hyp}
\item\label{hyp:minNorm} The least norm element $\eta^0(t,z)$ of the set $\cM(t,z)$ satisfies
\[
\vert \eta^0 (t,z) \vert \le \lambda(t) \, (1 + \vert z \vert), \quad \text{ for } t \in [0,T], z \in \dom \cM(t,\cdot)
\]
for some integrable function $\lambda \in \cL^1([0,T],\R)$.
\item\label{hyp:contDom} There exists an absolutely continuous function $\mu:[0,T] \rightarrow \R$ such that
\[
d_{\svm} (\cM(t_1,\cdot), \cM(t_2,\cdot)) \le \vert \mu(t_1)- \mu(t_2)\vert, \quad \text{ for } t_1,t_2 \in [0,T]
\]
where $d_{\svm} (\cM_1, \cM_2)$ defines a pseudo-distance between two operators $\cM_1$, $\cM_2$ and is given through
\begin{equation}\label{eq:defdsvm}
d_{\svm} (\cM_1, \cM_2) :=\sup \left\{\frac{\langle \eta_1 - \eta_2, z_2 - z_1 \rangle}{1 + |\eta_1| + |\eta_2|}, \eta_i \in \cM_i(z_i), z_i \in \dom(\cM_i), i=1,2 \right\}.
\end{equation}
\end{hyp}
Then for each $z_0 \in \dom \cM(0,\cdot)$, there exists a unique strong solution to the inclusion \eqref{eq:sysGenLit}.
\end{thm}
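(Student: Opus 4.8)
The plan is to prove uniqueness directly from monotonicity, and to obtain existence through an implicit time-discretization (catching-up) scheme whose limit is identified using the two hypotheses \ref{hyp:minNorm} and \ref{hyp:contDom}. Uniqueness is short, so I would dispatch it first: if $z_1,z_2$ are strong solutions with $z_1(0)=z_2(0)=z_0$, then $-\dot z_i(t)\in\cM(t,z_i(t))$ a.e., and monotonicity of $\cM(t,\cdot)$ gives $\langle \dot z_1(t)-\dot z_2(t),\,z_1(t)-z_2(t)\rangle\le 0$, hence $\tfrac{d}{dt}|z_1(t)-z_2(t)|^2\le 0$ a.e.; since the nonnegative absolutely continuous function $|z_1-z_2|^2$ starts at $0$ and is nonincreasing, it vanishes identically. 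This is exactly the device already used for \eqref{eq:sysLip} in the main text.

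For existence, I would fix partitions $0=t_0^N<\cdots<t_N^N=T$ with mesh tending to $0$, set $z_0^N:=z_0$, and define $z_{k+1}^N:=(I+h_k\cM(t_{k+1}^N,\cdot))^{-1}(z_k^N)$ with $h_k=t_{k+1}^N-t_k^N$. Each resolvent is well-defined, single-valued and nonexpansive because $\cM(t,\cdot)$ is maximal monotone, so the scheme is unambiguous and the discrete velocity $v_{k+1}^N:=(z_k^N-z_{k+1}^N)/h_k$ lies in $\cM(t_{k+1}^N,z_{k+1}^N)$. The crucial a priori estimate is a uniform bound on these velocities: combining the least-norm growth bound \ref{hyp:minNorm} with the variation control \ref{hyp:contDom} (used to compare the operators at consecutive grid times through $d_{\svm}$ together with monotonicity) yields, via a discrete Gr\"onwall argument absorbing the $(1+|z|)$ growth, uniform integrability of the interpolated velocities, dominated by a fixed $\cL^1$ function built from $\lambda$ and $\dot\mu$. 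Linear interpolation then produces absolutely continuous curves $z^N$ with $z^N(0)=z_0$ that are uniformly bounded and equi-absolutely-continuous.

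It remains to pass to the limit. In finite dimensions Arzel\`a--Ascoli gives a subsequence converging uniformly to an absolutely continuous $z$ with velocities converging weakly in $\cL^1$ (via Dunford--Pettis), while in a general Hilbert space $\cH$ I would instead show the discrete solutions form a Cauchy sequence in $C([0,T];\cH)$ directly, using nonexpansiveness of the resolvents and monotonicity. To identify the limit I would rewrite the discrete inclusion as $-v_{k+1}^N\in\cM(t_{k+1}^N,z_{k+1}^N)$ and pass to the limit in a Minty-type monotonicity inequality: for any fixed $(\bar z,\bar\eta)$ with $\bar\eta\in\cM(t,\bar z)$, test the discrete monotonicity relation, integrate in time, and send $N\to\infty$, using demiclosedness of the graph of a maximal monotone operator in the state variable. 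This yields $-\dot z(t)\in\cM(t,z(t))$ a.e., so $z$ is the sought strong solution.

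The main obstacle is precisely this passage to the limit in a \emph{time-dependent} inclusion: one must control how far $\cM(t_{k+1}^N,\cdot)$ sits from $\cM(t,\cdot)$ while simultaneously closing the graph in $z$. This is the role of the pseudo-distance $d_{\svm}$ and of \ref{hyp:contDom}, since the time-mismatch between $t_{k+1}^N$ and $t$ is absorbed by $d_{\svm}(\cM(t_{k+1}^N,\cdot),\cM(t,\cdot))\le|\mu(t_{k+1}^N)-\mu(t)|$, which vanishes because $\mu$ is (absolutely) continuous; this generalizes the familiar sweeping-process estimate $d_{\Haus}(C(t_1),C(t_2))\le|\mu(t_1)-\mu(t_2)|$ to arbitrary maximal monotone operators, and its compatibility with monotonicity is what makes both the velocity estimate and the limit identification work. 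A secondary technicality is measurability in $t$, which continuity of $\mu$ and \ref{hyp:contDom} supply. As an alternative route I would regularize by the Yosida approximation $\cM_\eps(t,\cdot)$, solve the resulting Carath\'eodory ODEs $\dot z_\eps=-\cM_\eps(t,z_\eps)$, and let $\eps\to 0$; the same two hypotheses deliver the uniform velocity bound and the limit identification.
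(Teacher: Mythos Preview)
The paper does not prove this theorem at all: it is quoted in Appendix~\ref{app:litLemmas} as a result from the literature (\cite[Theorem~3]{KunzMont97}) and is used as a black box inside the proof of Lemma~\ref{lem:solIncmu0}. There is therefore no ``paper's own proof'' to compare your proposal against.

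That said, your outline is essentially the strategy of the cited reference: uniqueness from monotonicity, existence via an implicit catching-up scheme with a uniform $\cL^1$ velocity bound obtained from \ref{hyp:minNorm} and \ref{hyp:contDom}, followed by a Minty-type closure argument exploiting the pseudo-distance $d_{\svm}$ to handle the time dependence. As a sketch this is sound; the parts you flag as delicate (the discrete Gr\"onwall step combining the $(1+|z|)$ growth with the $d_{\svm}$ variation, and the graph-closure in the presence of time mismatch) are indeed where the work lies in \cite{KunzMont97}, and your proposal does not carry them out in detail, but the plan is correct and matches the original source.
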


\begin{lem}[\cite{Robi75}]\label{lem:hausBndK}
Consider the closed convex polyhedral cone $\cK$ defined by $\cK:=\{y \in \R^{d_s} \, \vert \, R \, y \ge 0\}$. For a given matrix $Q$ and vectors $h_1,h_2$, introduce the sets $\cS_1 := \{ x \, \vert \, Qx - h_1 \in \cK\}$ and $\cS_2 := \{ x \, \vert \, Qx - h_2 \in \cK\}$.
Then, there exists a constant $c_\cK> 0$ (depending only on $Q$ and $R$) such that
\[
d_{\Haus}(\cS_1,\cS_2) \le c_\cK \|R\| \, |h_1 - h_2|.
\]
\end{lem}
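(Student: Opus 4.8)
The plan is to reduce the lemma to a single system of linear inequalities whose coefficient matrix is fixed while only the right-hand side varies, and then to invoke Hoffman's error bound for polyhedra. First I would rewrite the two sets in a common form. Using the description $\cK = \{y \in \R^{d_s} \mid Ry \ge 0\}$, the condition $Qx - h_i \in \cK$ is equivalent to $RQx \ge Rh_i$. Setting $M := RQ$ and $b_i := Rh_i$, we obtain $\cS_i = \{x \mid Mx \ge b_i\}$ for $i = 1,2$, so that the two (nonempty) polyhedra differ only through their right-hand sides $b_1, b_2$.

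The central tool is Hoffman's lemma, the global error bound for polyhedral systems: there exists a constant $\gamma(M) > 0$, depending only on the matrix $M$, such that for every $x$ and every right-hand side $b$ for which $\{z \mid Mz \ge b\}$ is nonempty, one has
\[
d(x, \{z \mid Mz \ge b\}) \le \gamma(M)\, \big\|(b - Mx)_+\big\|,
\]
where $(\cdot)_+$ denotes the componentwise positive part. The decisive feature is that $\gamma(M)$ is uniform over all admissible $b$; this is exactly the stability content provided by \cite{Robi75}.

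With this in hand, I would estimate one of the two one-sided Hausdorff deviations. Fix $x_1 \in \cS_1$, so that $Mx_1 \ge b_1$. Applying the error bound with $b = b_2$ gives $d(x_1, \cS_2) \le \gamma(M)\|(b_2 - Mx_1)_+\|$. Since $Mx_1 \ge b_1$, each component satisfies $(b_2 - Mx_1)_i \le (b_2 - b_1)_i$, and by monotonicity of the positive part $\|(b_2 - Mx_1)_+\| \le \|b_2 - b_1\| = \|R(h_2 - h_1)\| \le \|R\|\,|h_1 - h_2|$. Hence $d(x_1, \cS_2) \le \gamma(M)\|R\|\,|h_1 - h_2|$ for every $x_1 \in \cS_1$, and the symmetric argument (swapping the roles of the two sets) yields the same bound for $\sup_{x_2 \in \cS_2} d(x_2, \cS_1)$. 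Taking the maximum of the two one-sided suprema and setting $c_\cK := \gamma(RQ)$, which depends only on $R$ and $Q$, gives $d_{\Haus}(\cS_1, \cS_2) \le c_\cK \|R\|\,|h_1 - h_2|$, as claimed.

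The main obstacle is the error bound itself, specifically the uniformity of $\gamma(M)$ over all consistent right-hand sides. This is the nonelementary ingredient: one standard route is to express $d(x, \{z \mid Mz \ge b\})$ through linear-programming duality and then argue, over the finitely many subsets of active rows (equivalently, the finitely many faces of the arrangement), that the relevant dual extreme-point norms are bounded by a quantity depending only on $M$; a finiteness/compactness argument then delivers a single $\gamma(M)$. Since the statement is quoted from \cite{Robi75}, I would cite this directly rather than reprove it, and reserve the written argument for the reduction to $\cS_i = \{x \mid Mx \ge b_i\}$ and the componentwise estimate above, both of which are elementary.
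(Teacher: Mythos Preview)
The paper does not actually give a proof of this lemma: it is stated in the appendix as a result quoted from \cite{Robi75}, with no argument supplied. Your proposal is therefore not being compared against a proof in the paper, but against an external citation.

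That said, your argument is correct and is precisely the standard route to such a bound. The reduction $\cS_i = \{x \mid RQx \ge Rh_i\}$ is immediate, the application of Hoffman's error bound with fixed coefficient matrix $M=RQ$ and varying right-hand side $b_i=Rh_i$ is exactly the right tool, and your componentwise estimate $(b_2 - Mx_1)_+ \le (b_2 - b_1)_+$ (using $Mx_1 \ge b_1$) is clean. The resulting constant $c_\cK = \gamma(RQ)$ indeed depends only on $R$ and $Q$, matching the claim. Since Robinson's 1975 paper is itself a refinement and extension of Hoffman's bound to more general perturbation settings, your approach is not a different route from the cited source but essentially a sketch of what that citation contains; you are right that citing \cite{Robi75} for the uniform error bound and writing out only the elementary reduction is the appropriate level of detail here.
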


\begin{thm}[{\cite[Theorem~24.20]{BausComb11}, \cite{BrezHara76}}]\label{thm:sumMon}
Let $\cM_1$ and $\cM_2$ be set-valued monotone operators defined on a Hilbert space $\cH$ such that
\begin{itemize}
\item the sum $\cM_1 + \cM_2$ is maximal monotone, and
\item for $i=1,2$, $\dom \cM_i \times \rge \cM_i \subseteq \dom \cF_i$, where $\cF_i: \cH \times \cH \rightarrow [-\infty,\infty]$ is defined as
\[
\cF_i(x,y) = \langle x,y\rangle - \inf_{(\overline x, \overline y) \in \gph{\cM_i}} \langle x - \overline x, y - \overline y \rangle.
\]
\end{itemize}
Then, it holds that
\begin{gather}
\cl[\rge(\cM_1+\cM_2)] = \cl [\rge\cM_1 + \rge\cM_2] \\
\inn({\rge(\cM_1+\cM_2)}) = \inn[ \rge\cM_1 + \rge\cM_2].
\end{gather}
\end{thm}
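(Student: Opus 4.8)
The plan is to recognize the statement as the Br\'ezis--Haraux range theorem and to prove the two set identities by a regularization argument. First I would record that the auxiliary functions $\cF_i$ are exactly the Fitzpatrick functions of $\cM_i$: a one-line rewriting gives $\cF_i(x,y)=\langle x,y\rangle-\inf_{(\bar x,\bar y)\in\gph\cM_i}\langle x-\bar x,y-\bar y\rangle=\sup_{(u,u^*)\in\gph\cM_i}[\langle x,u^*\rangle+\langle u,y\rangle-\langle u,u^*\rangle]$, so the hypothesis $\dom\cM_i\times\rge\cM_i\subseteq\dom\cF_i$ says precisely that each $\cM_i$ is \emph{rectangular} ($3^{*}$-monotone), i.e.\ its Fitzpatrick function is finite on $\dom\cM_i\times\rge\cM_i$. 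The trivial inclusion $\rge(\cM_1+\cM_2)\subseteq\rge\cM_1+\rge\cM_2$ (if $z\in(\cM_1+\cM_2)(x)$ then $z=p+q$ with $p\in\cM_1(x)$ and $q\in\cM_2(x)$) already yields one inclusion in both identities after passing to closures/interiors; the content lies in the reverse direction.

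For the closure identity I would fix $y^{*}=y_1^{*}+y_2^{*}$ with $y_i^{*}\in\cM_i(w_i)$ and approximate it. Since $\cM_1+\cM_2$ is maximal monotone, $(\cM_1+\cM_2)+\lambda I$ is surjective by Minty's theorem, so for each $\lambda>0$ there is $x_\lambda$ with $y^{*}-\lambda x_\lambda\in(\cM_1+\cM_2)(x_\lambda)$; write $y^{*}-\lambda x_\lambda=p_\lambda+q_\lambda$ with $p_\lambda\in\cM_1(x_\lambda)$ and $q_\lambda\in\cM_2(x_\lambda)$. Then $p_\lambda+q_\lambda\in\rge(\cM_1+\cM_2)$ and $p_\lambda+q_\lambda\to y^{*}$ exactly when the residual $\lambda x_\lambda\to 0$, which would place $y^{*}$ in $\cl\rge(\cM_1+\cM_2)$. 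I would invoke the standard facts that $\cl\rge T$ is convex for a maximal monotone $T$ and that $\lambda x_\lambda$ converges to $y^{*}-\proj_{\cl\rge(\cM_1+\cM_2)}(y^{*})$, so it suffices to exclude a separating direction: supposing $r:=y^{*}-\proj_{\cl\rge(\cM_1+\cM_2)}(y^{*})\neq 0$, strict separation gives $\langle r,y^{*}\rangle>\sup_{z\in\rge(\cM_1+\cM_2)}\langle r,z\rangle$, and the contradiction is obtained by showing $\langle r,y_i^{*}\rangle\le\liminf_{\lambda\to0}\langle r,p_\lambda\rangle$ (resp.\ $\langle r,q_\lambda\rangle$) and summing.

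The hard part is precisely this last estimate, and it is where rectangularity is indispensable. Monotonicity of $\cM_1$ applied to $(x_\lambda,p_\lambda)$ and $(w_1,y_1^{*})$, and of $\cM_2$ applied to $(x_\lambda,q_\lambda)$ and $(w_2,y_2^{*})$, controls the two operators only at the \emph{differing} base points $w_1,w_2$; the difficulty is to transfer these bounds to the \emph{single common} argument $x_\lambda$ so that they can be added. Rectangularity, i.e.\ finiteness of $\cF_i$ on $\dom\cM_i\times\rge\cM_i$, is exactly the device that permits this transfer, producing the uniform a priori estimate that forces $\liminf_{\lambda\to0}\langle r,p_\lambda+q_\lambda\rangle\ge\langle r,y^{*}\rangle$ and thereby contradicts the strict separation. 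Put differently, rectangularity guarantees that along the regularizing family the two operators simultaneously approach the supremum of $\langle r,\cdot\rangle$ over their respective ranges, so that $\sup_{z\in\rge(\cM_1+\cM_2)}\langle r,z\rangle\ge\langle r,y_1^{*}\rangle+\langle r,y_2^{*}\rangle$, which is the desired contradiction. I expect this coupling step to be the main obstacle of the whole proof.

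Finally, for the interior identity I would upgrade the approximation to genuine solvability. Taking $y^{*}\in\inn(\rge\cM_1+\rge\cM_2)$, a ball $y^{*}+\delta\overline{B}\subseteq\rge\cM_1+\rge\cM_2$ supplies a monotonicity inequality for each boundary direction, and these together bound the family $\{x_\lambda\}$ itself rather than merely $\lambda x_\lambda$. A bounded $\{x_\lambda\}$ has a weak cluster point $\bar x$, along which $\lambda x_\lambda\to0$ and hence $p_\lambda+q_\lambda\to y^{*}$ strongly; demiclosedness of the graph of the maximal monotone operator $\cM_1+\cM_2$ then gives $y^{*}\in(\cM_1+\cM_2)(\bar x)\subseteq\rge(\cM_1+\cM_2)$. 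This proves $\inn(\rge\cM_1+\rge\cM_2)\subseteq\rge(\cM_1+\cM_2)$, and combining it with the trivial inclusion and taking interiors yields the second identity. As in the closure case, the boundedness of $\{x_\lambda\}$ extracted from the ball condition must again be mediated by rectangularity to couple the two operators, and that is the technical crux of this part.
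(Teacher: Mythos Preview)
The paper does not prove this theorem: it is stated in Appendix~\ref{app:litLemmas} (``Useful Results from Literature'') as a citation of \cite[Theorem~24.20]{BausComb11} and \cite{BrezHara76}, with no accompanying argument. So there is no ``paper's own proof'' to compare against; the authors merely invoke the result to justify a step in Proposition~\ref{lem:domPhi}.

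Your outline is the classical Br\'ezis--Haraux strategy and is broadly correct in structure: recognize the hypothesis as $3^*$-monotonicity via the Fitzpatrick function, use the trivial inclusion $\rge(\cM_1+\cM_2)\subseteq\rge\cM_1+\rge\cM_2$, and for the reverse inclusions run a Minty regularization $y^*\in(\cM_1+\cM_2+\lambda I)(x_\lambda)$ and control the residual $\lambda x_\lambda$. One caution: the intermediate claim that ``$\lambda x_\lambda$ converges to $y^*-\proj_{\cl\rge(\cM_1+\cM_2)}(y^*)$'' is itself a nontrivial asymptotic result (of Pazy/Reich type) and is not needed in the original Br\'ezis--Haraux argument, which instead extracts directly from $3^*$-monotonicity the a priori bound $\lambda\|x_\lambda\|^2\le C$ (by pairing the graph points $(x_\lambda,p_\lambda)$, $(x_\lambda,q_\lambda)$ against the finiteness of $\cF_1(w_2,y_1^*)$ and $\cF_2(w_1,y_2^*)$ and summing). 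That direct estimate both avoids the separation detour you set up and gives $\lambda x_\lambda\to 0$ immediately. Your interior argument (bound $\{x_\lambda\}$ from the ball condition, pass to a weak limit, use demiclosedness of the maximal monotone graph) is the standard one and is fine as sketched.
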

The operators satisfying the second condition listed in Theorem~\ref{thm:sumMon} are called $3^*$-monotone. If $\cS$ is a convex set, and $\sigma_{\cS}$ denote its support function, then $\partial \sigma_{\cS}$ is $3^*$-monotone \cite[Example 24.9]{BausComb11}. For $J$ positive definite, the same conclusion follows from \cite[Example~24.11]{BausComb11}.

\section*{Acknowledgements}
The first author would like to thank Kanat Camlibel and Luigi Iannelli for useful discussions related to class of differential inclusions studied in this paper. Several exchanges with them since the earlier version of this article was published at IEEE Conference on Decision and Control at Los Angeles in December 2014, have helped us improve the proofs in Section~\ref{sec:sol}. The authors also thank Dr. Ba Khiet Le for suggesting corrections in the earlier version of the proof of Lemma~\ref{lem:solIncmu0}.

\small
\bibliographystyle{plain}
\bibliography{DiffIncbib}

\end{document}